\newcommand{\alp}{\boldsymbol{a}}
\newcommand{\RR}{\mathbb{R}}
\newcommand{\CC}{\mathbb{C}}
\newcommand{\ZZ}{\mathbb{Z}}
\newcommand{\NN}{\mathbb{N}}
\newcommand{\TSS}[1][A]{\X_{#1}}
\newcommand{\osh}[1][X]{\sigma_{#1}}
\newcommand{\tsh}[1][\X]{\mathbf{\sigma}_{#1}}
\newcommand{\lp}{\operatorname{lp}}
\newcommand{\G}{\mathcal{G}}
\newcommand{\LL}{\mathcal{L}}
\renewcommand{\hom}{\operatorname{Hom}}
\newcommand{\x}{\mathbf{x}}
\newcommand{\X}{\mathbf{X}}
\newcommand{\y}{\mathbf{y}}
\newcommand{\Y}{\mathbf{Y}}
\newcommand{\bvarphi}{\boldsymbol{\varphi}}
\newcommand{\D}{\mathcal{D}}
\newcommand{\Oo}{\mathcal{O}}
\newcommand{\frt}{\operatorname{frt}}
\newtheorem{lemma}{Lemma}[section]
\newtheorem{corollary}[lemma]{Corollary}
\newtheorem{theorem}[lemma]{Theorem}
\newtheorem{proposition}[lemma]{Proposition}
\newtheorem{claim}[lemma]{Claim}
\theoremstyle{definition}
\newtheorem{remark}[lemma]{Remark}
\title[Flow equivalence and orbit equivalence for shifts of finite type]{Flow equivalence and orbit equivalence for shifts of finite type and isomorphism of their groupoids}
\author{Toke Meier Carlsen}
\address{Department of Science and Technology\\University of the Faroe Islands\\
N\'oat\'un 3\\ FO-100 T\'orshavn\\the Faroe Islands}
\email{toke.carlsen@gmail.com}
\author{Søren Eilers}
\address{Department of Mathematical Sciences\\University of Copenhagen\\Universitetsparken 5\\DK-2100 Copenhagen\\Denmark}
\email{eilers@math.ku.dk}
\author{Eduard Ortega}
\address{Department of Mathematical Sciences\\NTNU\\NO-7491 Trondheim\\Norway}
\email{eduardo.ortega@math.ntnu.no}
\author{Gunnar Restorff}
\address{Department of Science and Technology\\University of the Faroe Islands\\
N\'oat\'un 3\\ FO-100 T\'orshavn\\the Faroe Islands}
\email{gunnarr@setur.fo} 
\date{\today}
\keywords{Continuous orbit equivalence, flow equivalence, shift spaces, subshift, shifts of finite type, groupoids, cohomology of topological dynamical systems, cohomology of étale groupoids, graph $C^*$-algebras, Leavitt path algebras, Cuntz--Krieger algebras, diagonal-preserving isomorphisms.}
\subjclass[2010]{Primary 37B10; Secondary 16S99, 22A22, 37A55, 46L55}
\begin{document}

\begin{abstract}
We give conditions for when continuous orbit equivalence of one-sided shift spaces implies flow equivalence of the associated two-sided shift spaces. Using groupoid techniques, we prove that this is always the case for shifts of finite type. This generalises a result of Matsumoto and Matui from the irreducible to the general case. We also prove that a pair of one-sided shift spaces of finite type are continuously orbit equivalent if and only if their groupoids are isomorphic, and that the corresponding two-sided shifts are flow equivalent if and only if the groupoids are stably isomorphic.
	
As applications we show  that two finite directed graphs with no sinks and no sources are move equivalent if and only if the corresponding graph $C^*$-algebras are stably isomorphic by a diagonal-preserving isomorphism (if and only if the corresponding Leavitt path algebras are stably isomorphic by a diagonal-preserving isomorphism), and that two topological Markov chains are flow equivalent if and only if there is a diagonal-preserving isomorphism between the stabilisations of the corresponding Cuntz--Krieger algebras (the latter generalises a result of Matsumoto and Matui about irreducible topological Markov chains with no isolated points to a result about general topological Markov chains).

We also show that for general shift spaces, strongly continuous orbit equivalence implies two-sided conjugacy.
\end{abstract}

\maketitle

\section{Introduction}

In their beautiful recent paper \cite{MM}, Matsumoto and Matui proved that a simple Cuntz--Krieger algebra remembers the  flow equivalence class of the irreducible shift of finite type defining it, provided that the canonical diagonal subalgebra is considered as a part of the data. A key tool for obtaining this groundbreaking result was the realisation that diagonal-preserving isomorphism translates directly to isomorphism of the groupoids associated to the shift spaces, reducing the problem to establishing that when two one-sided irreducible shifts of finite type are continuously orbit equivalent in the sense developed by Matsumoto, then the corresponding two-sided shift spaces are flow equivalent.

Having such rigidity results for $C^*$-algebras associated to general shift spaces of finite type would provide a better understanding of the classification problem for general Cuntz--Krieger algebras recently solved in \cite{ERRS} and \cite{ERRS2}. From the point of view of symbolic dynamics, it is also of interest to determine the class of shift spaces for which continuous orbit equivalence implies flow equivalence. 

The groupoid component of the proof in \cite{MM} has in \cite{BCW} and \cite{CRS} been generalised to a much more general setting, but the argument leading from diagonal-preserving isomorphism to flow equivalence in \cite{MM} goes through a deep result about the ordered cohomology of irreducible shifts of finite type by Boyle and Handelman (\cite{BH}) which does not readily extend to the reducible case. In addition, several of the arguments used in \cite{MM} rely on the assumption that the shifts of finite type in question do not contain isolated periodic points.

In the present paper we give a direct proof that continuously orbit equivalent shifts of finite type are also flow equivalent (Theorem~\ref{ib}) and thereby generalising \cite[Theorem 3.5]{MM} from irreducible one-sided Markov shifts to general (possible reducible) shifts of finite type. We do that by producing a concrete flow equivalence from a given orbit equivalence between general shift spaces with continuous cocycles under added hypotheses on the given orbit equivalence and cocycles (Proposition~\ref{diego}), and then proving by methods related to the original proof in \cite{MM} that when the shift spaces are of finite type, then these hypotheses may always be arranged (Proposition~\ref{sven} and Proposition~\ref{john}). 

As a corollary to Proposition~\ref{diego}, we generalise in Corollary~\ref{cor:scoe} \cite[Theorem 5.5]{Mat2} from irreducible topological Markov chains with no isolated points to general shift spaces by showing that for general shift spaces, strongly continuous orbit equivalence implies two-sided conjugacy.

We also prove that the groupoids of two one-sided shifts of finite type are isomorphic if and only if the shift spaces are continuously orbit equivalent (Theorem~\ref{orbit}), and by combining this with a result of Matui \cite{Matui} and results in \cite{CRS} and \cite{ERRS}, we obtain that these groupoids are stably isomorphic if and only if the corresponding two-sided shift spaces are flow equivalent (Theorem~\ref{thm:1}).

As applications, we show in Corollary~\ref{cor:2} that the one-sided edge shifts of two finite directed graphs with no sinks and no sources are continuous orbit equivalent if and only if the corresponding graph $C^*$-algebras are isomorphic by a diagonal-preserving isomorphism (if and only if the corresponding Leavitt path algebras are isomorphic by a diagonal-preserving  isomorphism), and we show in Corollary~\ref{cor:1} that the graphs are move equivalent, as defined in \cite{Sor}, if and only if the corresponding graph $C^*$-algebras are stably isomorphic by a diagonal-preserving isomorphism (if and only if the corresponding Leavitt path algebras are stably isomorphic by a diagonal-preserving  isomorphism). 

We also apply our results to Cuntz--Krieger algebras and topological Markov chains and directed graphs of $\{0,1\}$-matrices and thereby generalise \cite[Theorem 2.3]{MM} and \cite[Corollary 3.8]{MM} from the irreducible to the general case (Corollary~\ref{CK} and Corollary~\ref{cor:3}).

\subsection*{Acknowledgements}

This work was partially supported by the Danish National Research Foundation
through the Centre for Symmetry and Deformation (DNRF92), by VILLUM
FONDEN through the network for Experimental Mathematics in Number Theory,
Operator Algebras, and Topology, and by the Danish Council for Independent Research $|$
Natural Sciences (7014-00145B).

Some of the work was done while all four authors
were attending the research program \emph{Classification of operator algebras: complexity,
rigidity, and dynamics} at the Mittag-Leffler Institute, January--April 2016. We
thank the institute and its staff for the excellent work conditions provided.

\section{Definitions and notation}

In this section we briefly recall the definitions of \emph{shift spaces}, \emph{shifts of finite type}, \emph{continuous orbit equivalence of shift spaces}, and \emph{flow equivalence of shift spaces}, and introduce notation.

We let $\NN$ denote the set of positive integers, and $\NN_0$ the set of non-negative integers. 

\subsection{One-sided shift spaces}

A \emph{one-sided shift space} (or \emph{one-sided subshift}) is a closed, and hence compact, subset $X$ of $\alp^{\NN_0}$, where $\alp$ is a finite set equipped with the discrete topology and  $\alp^{\NN_0}$ is equipped with the product topology, such that $X$ is invariant by the shift transformation 
$$\sigma:\alp^{\NN_0}\to \alp^{\NN_0}\,,$$
(i.e., $\sigma(X)= X$) given by $(\sigma((x_i)_{i\in\NN_0}))_j=x_{j+1}$ for $j\in\NN_0$. When $X$ is a one-sided shift space, then we let $\osh:X\to X$ denote the restriction of $\sigma$ to $X$. For $n\in\NN_0$ we denote by $\osh^n$ the $n$-fold composition of $\osh$ with itself (when $n=0$, then $\osh^n$ denotes the identity map on $X$). 

Two one-sided shift spaces $X$ and $Y$ are \emph{conjugate} if there is a \emph{conjugacy} between them, i.e., a homeomorphism $h:X\to Y$ such that $\osh[Y]\circ h=h\circ\osh$.

Let $X$ be a one-sided shift space. We say that $x\in X$ is \emph{periodic} if $\osh^p(x)=x$ for some $p\in\NN$, and that $x$ is \emph{eventually periodic} if $\osh^n(x)$ is periodic for some $n\in\NN_0$. When $x\in X$ is eventually periodic, then we call the number 
$$\lp(x):=\min\{p\in\NN:\exists n,m\in\NN_0:p=n-m\text{ and }\osh^n(x)=\osh^m(x)\}$$ 
\emph{the least period} of $x$. 

When $X$ is a shift space, we write $\LL(X)$ for the \emph{language of $X$} (i.e., the set of finite words, included the empty word $\emptyset$, that appear in elements of $X$). Given a word $v$ in $\LL(X)$, we denote by $|v|$ the length of $v$, and for $m\in\NN$, we let $\LL^m(X)$ be the set of words in $\LL(X)$ of length $m$. Given $x\in X$ and $n,m\in \NN_0$ with $n\leq m$, we define the word $x_{[n,m]}:=(x_n,\ldots,x_m)\in \LL^{m-n+1}(X)$.
 For $v\in\LL(X)\setminus\{\emptyset\}$, we write $Z(v)$ for the \emph{cylinder set} $\{x\in X:x_{[0,|v|)}=v\}$ where $x_{[0,|v|)}:=x_{[0,|v|-1]}$.

\subsection{Shifts of finite type}

A \emph{one-sided shift of finite type} is a one-sided shift space $X$ such that there is an $m\in\NN$ with the property that if $v\in\LL(X)$ has length $m$ and $uv,vw\in\LL(X)$, then $uvw\in\LL(X)$. The shift map $\osh$ is a local homeomorphism if and only if $X$ is a shift of finite type, in which case $\osh^n$ is a local homeomorphism for all $n\in\NN_0$.

\subsection{Continuous orbit equivalence}

Let $X$ and $Y$ be two one-sided shift spaces. Following \cite{Mat}, we say that a homeomorphism $h:X\to Y$ is a \emph{continuous orbit equivalence} if there exist continuous maps $k,l:X\to \NN_0$ and $k',l':Y\to \NN_0$  such that 
\begin{equation}\label{eq:1}
\osh[Y]^{k(x)}(h(\osh(x)))=\osh[Y]^{l(x)}(h(x))
\end{equation}
for $x\in X$, and
\begin{equation}\label{eq:2}
\osh^{k'(y)}(h^{-1}(\osh[Y](y)))=\osh^{l'(y)}(h^{-1}(y))
\end{equation}

for $y\in Y$. Observe that in this case $h^{-1}:Y\rightarrow X$ is also a continuous orbit equivalence. We say that $X$ and $Y$ are \emph{continuously orbit equivalent} if there exists a continuous orbit equivalence $h:X\to Y$ (it is routine to check that the composition of two continuous orbit equivalences is a continuous orbit equivalence, and thus that continuous orbit equivalence indeed is an equivalence relation of one-sided shift spaces, cf.~\cite[Lemma 2.3]{Mat2}). If $h:X\to Y$ is a continuous orbit equivalence, then we say that a pair $(k,l)$ of continuous maps $k,l:X\to \NN_0$ satisfying \eqref{eq:1} is a \emph{$h$-cocycle pair}. 

\subsection{Flow equivalence}

Let $X$ be a one-sided shift space. Given $\x=(x_n)_{n\in \ZZ}\in \alp^\ZZ$ and $m\in \ZZ$, we define 
$$\x_{[m,\infty)}:=(x_m,x_{m+1},\ldots)\in\alp^{\NN_0}\,.$$
The \emph{two-sided shift space associated to $X$} is defined to be
$$\X:=\{\x\in\alp^\ZZ: \x_{[m,\infty)}\in X\text{ for all }m\in \ZZ\}\,.$$ 
The set $\X$ is a closed and compact subset of $\alp^\ZZ$ with the induced product topology of $\alp^\ZZ$, and invariant by the shift transformation $$\tsh:\X\to \X$$ given by $(\tsh((x_i)_{i\in\ZZ})_j=x_{j+1}$ for $j\in\ZZ$. Notice that $X\leftrightarrow \X$ is a bijective correspondence between the class of one-sided shift spaces and the class of two-sided shift spaces (i.e., the class of closed subsets $\X$ of $\alp^\ZZ$ satisfying that $\tsh(\X)=\X$). Two two-sided shift spaces $\X$ and $\Y$ are \emph{conjugate} if there is a \emph{conjugacy} between them, i.e., a homeomorphism $\bvarphi:\X\to \Y$ such that $\tsh[\Y]\circ \bvarphi=\bvarphi\circ\tsh$. If $X$ and $Y$ are conjugate, then $\X$ and $\Y$ are conjugate (but $\X$ and $\Y$ can be conjugate without $X$ and $Y$ being conjugate).

We say that $\x\in\X$ is \emph{periodic} if $\tsh^p(\x)=\x$ for some $p\in\NN$. When $\x\in \X$ is periodic, then we call the number $$\lp(\x):=\min\{p\in\NN:\tsh^p(\x)=\x\}$$ \emph{the least period} of $\x$.

Let $\sim$ be the smallest equivalence relation on $\X\times\RR$ such that $(\tsh^n(\x),t)\sim (\x,t+n)$ for $\x\in\X$, $t\in\RR$ and $n\in\ZZ$, and let $[(\x,t)]$ denote the equivalence class of $(\x,t)$.
The \emph{suspension} $S\X$ of $\X$ is the quotient $\X\times\RR/\sim$ equipped with the quotient topology of the product topology on $\X\times\RR$.

A \emph{flow equivalence} between the suspensions of two two-sided shift spaces $\X$ and $\Y$ is a homeomorphism $\psi:S\X\to S\Y$ that maps flow lines onto flow lines in an orientation preserving way: so if $\x\in\X$, $\y\in\Y$, $r,s,t,u\in\RR$, $s,u>0$ and $\psi([(\x,t)])=[(\y,r)]$, then there is an $v>0$ such that $\psi([(\x,t+s)])=[(\y,r+v)]$, and a $w>0$ such that $\psi^{-1}([(\y,r+u)])=[(\x,t+w)]$. Two two-sided shift spaces $\X$ and $\Y$ are \emph{flow equivalent} if there exists a flow equivalence
between $S\X$ and $S\Y$. It is routine to check that the composition of two flow equivalences is a flow equivalence, and thus that flow equivalence is an equivalence relation of two-sided shift spaces. If $\X$ and $\Y$ are conjugate, then $\X$ and $\Y$ are flow equivalent, but $\X$ and $\Y$ can be flow equivalent without being conjugate.

\subsection{The cohomology of a shift space} \label{cohomology}

Let $X$ be a one-sided shift space. Following \cite{MM}, we let $H^X$ be the group 
\begin{equation*}
H^X:=C(X,\ZZ)/\{f-f\circ\osh:f\in C(X,\ZZ)\}
\end{equation*}
with addition defined by $[f]+[g]=[f+g]$, and we let 
\begin{equation*}
H^X_+:=\{[f]\in H^X: f(x)\ge 0\text{ for all }x\in X\}.	
\end{equation*}
It follows from \cite[Lemma 3.1]{MM} that the preordered group $(H^X,H^X_+)$ is isomorphic to the ordered cohomology group $(G^{\tsh},G^{\tsh}_+)$ of $(\X,\tsh)$ defined in \cite{BH} (\cite[Lemma 3.1]{MM} is only stated for irreducible shifts associated with $\{0,1\}$ matrices, but it is easy to see that its proof holds for general shift spaces).

\subsection{The groupoid of a one-sided shift space of finite type}

The groupoid $\G_X$ of a one-sided shift of finite type $X$ has unit space $\G^{(0)}:=X$ and morphisms  
\begin{equation*}
\G_X:=\{(x,n,x')\in X\times\ZZ\times X:\exists i,j\in\NN_0:n=i-j \text{ and } \osh^i(x)=\osh^j(x')\}.
\end{equation*}
The range and source maps $r,s:\G_X\to \G^{(0)}$ are defined by $r((x,n,x'))=x$ and $s((x,n,x'))=x'$, and the product and inverse operators by $(x,n,x')(x',n',x'')=(x,n+n',x'')$ and $(x,n,x')^{-1}=(x',-n,x)$. We let $c:\G_X\to\ZZ$ be the map defined by $c((x,n,x'))=n$. There is a topology on $\G_X$ that has a basis consisting of sets of the form
\begin{equation*}
\{(x,i-j,x'):x\in U,\ x'\in U',\ \osh^i(x)=\osh^j(x')\}
\end{equation*}
where $i,j\in\NN_0$ and $U$ and $U'$ are open subsets such that $\osh^i$ restricted to $U$ is injective, $\osh^j$ restricted to $U'$ is injective, and $\osh^i(U)=\osh^j(U')$. If we identify $X$ with the subspace $\{(x,0,x):x\in X\}$ of $\G_X$, then the topology of $X$ coincides with the subspace topology.   

With the topology described above, $\G_X$ is an \emph{ample} Hausdorff groupoid, i.e., the product and inverse operators are continuous and the topology is Hausdorff and has a basis of compact open bisections (a subset $A$ of a groupoid $\G$ is a \emph{bisection} if both the restriction of the range map and the restriction of the source map to $A$ are injective). In particular, $\G_X$ is étale (i.e., the range and source maps are local homeomorphisms).
 
As in \cite{MM}, we let $\hom(\G_X,\ZZ)$ be the set of continuous maps $\omega:\G_X\to\ZZ$ such that $\omega(\eta^{-1})=-\omega(\eta)$ for $\eta\in\G_X$ and $\omega(\eta_1\eta_2)=\omega(\eta_1)+\omega(\eta_2)$ for $\eta_1,\eta_2\in\G_X$ with $s(\eta_1)=r(\eta_2)$. For $f\in C(X,\ZZ)$, the map $\partial(f):\G_X\to\ZZ$ defined by $\partial(f)(\eta)=f(r(\eta))-f(s(\eta))$ belongs to $\hom(\G_X,\ZZ)$. As in \cite{MM}, we denote by $H^1(\G_X)$ the group
\begin{equation*}
H^1(\G_X):=\hom(\G_X,\ZZ)/\{\partial(f):f\in C(X,\ZZ)\}
\end{equation*}
with addition defined by $[f]+[g]=[f+g]$. We shall in Proposition \ref{erik} see that there is an isomorphism $\Phi:H^1(\G_X)\to H^X$ such that $\Phi([f])=[g]$, where $g\in C(X,\ZZ)$ is given by $g(x)=f((x,1,\osh(x)))$, and $\Phi([f])\in H^X_+$ if and only if $f((x,\lp(x),x))\ge 0$ for every eventually periodic point $x\in X$, cf.~\cite[Proposition 3.4]{MM}. 

A \emph{homomorphism} between two topological groupoids $\G_1$ and $\G_2$ is a continuous map $\phi:\G_1\to\G_2$ such that $\phi(\eta^{-1})=\phi(\eta)^{-1}$ for every $\eta\in\G_1$, and $\phi(\eta_1)\phi(\eta_2)$ is defined and equal to $\phi(\eta_1\eta_2)$ for all $\eta_1,\eta_2\in\G_1$ for which $\eta_1\eta_2$ is defined. An \emph{isomorphism} between two topological groupoids $\G_1$ and $\G_2$ is a bijective homomorphism $\phi:\G_1\to\G_2$ such that $\phi^{-1}:\G_2\to\G_1$ is also a homomorphism.

\section{Orbit equivalence and flow equivalence for general shift spaces}

One of the goals of this paper is to show that continuous orbit equivalence implies flow equivalence for shifts of finite type, and thereby generalise \cite[Theorem 3.5]{MM} from irreducible one-sided Markov shifts to general (possible reducible) shifts of finite type. In this section we prove Proposition~\ref{diego}, which gives sufficient conditions for when continuous orbit equivalence implies flow equivalence for general shift spaces. These conditions are related to the preordered cohomology groups of one-sided shift spaces introduced in Section~\ref{cohomology} (see the discussion right after Remark~\ref{Olga}). As a corollary (Corollary~\ref{cor:scoe}), we generalise \cite[Theorem 5.5]{Mat2} and show that for general shift spaces, strongly continuous orbit equivalence implies two-sided conjugacy.

In this paper, we only apply Proposition~\ref{diego} to shifts of finite type, but we hope that it also can be used to prove that orbit equivalence implies flow equivalence for other classes of shift spaces. Our strategy for proving Proposition~\ref{diego} is to use techniques and ideas related to those used in \cite{MM} and \cite{MM2} to construct a discrete flow equivalence from a continuous orbit equivalence satisfying the conditions of Proposition~\ref{diego}, and then construct a flow equivalence from the discrete flow equivalence. Since we work with shift spaces that might not be irreducible and might contain isolated points, we have to modify the approach of \cite{MM} and \cite{MM2} a bit.

\subsection{A sufficient condition for flow equivalence}

Let $X$ and $Y$ be two one-sided shift spaces and let $h:X\to Y$ be a continuous orbit equivalence. We say that $h$ \emph{maps eventually periodic points to eventually periodic points} if $h(x)$ is eventually periodic exactly when $x$ is eventually periodic.

\begin{remark}
Matsumoto and Matui prove in \cite[Proposition 3.5]{MM2} that if $X$ and $Y$ are the one-sided shift spaces associated with two irreducible $\{0,1\}$ square matrices that satisfy the Condition (I) introduced by Cuntz and Krieger in \cite{CK}, then any continuous orbit equivalence between $X$ and $Y$ maps eventually periodic points to eventually periodic points. By inspecting the proof, one sees that it actually holds for any pair of one-sided shifts spaces $X$ and $Y$ that have the property that the complement of the set of eventually periodic points is dense. We prove in Proposition~\ref{sven} that any continuous orbit equivalence between shifts of finite type maps eventually periodic points to eventually periodic points. We do not know if there are continuous orbit equivalences between one-sided shift spaces that do not map eventually periodic points to eventually periodic points.
\end{remark}
 
Let $X$ and $Y$ be two one-sided shift spaces and let $h:X\to Y$ be a continuous orbit equivalence that maps eventually periodic points to eventually periodic points. We say that an $h$-cocycle pair $(k,l)$ is \emph{least period preserving} if 
$$\lp(h(x))=\sum_{i=0}^{\lp(x)-1}\bigl(l(\osh^i(x))-k(\osh^i(x))\bigr)$$ 
for every eventually periodic point $x\in X$ (this terminology is justified by Proposition~\ref{john}).

\begin{proposition}\label{diego}
Let $X$ and $Y$ be two one-sided shift spaces and suppose that $h:X\to Y$ is a continuous orbit equivalence that maps eventually periodic points to eventually periodic points, that $(k,l)$ is a least period preserving $h$-cocycle pair, that $(k',l')$ is a least period preserving $h^{-1}$-cocycle pair, and that $b:X\to\ZZ$, $n:X\to\NN_0$, $b':Y\to\ZZ$ and $n':Y\to\NN_0$ are continuous maps such that $l(x)-k(x)=n(x)+b(x)-b(\osh(x))$ and $l'(y)-k'(y)=n'(y)+b'(y)-b'(\osh[Y](y))$ for $x\in X$ and $y\in Y$. Then $\X$ and $\Y$ are flow equivalent. 
\end{proposition}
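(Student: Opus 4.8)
\emph{Strategy.} The plan is to produce an explicit flow equivalence $\psi\colon S\X\to S\Y$, built in two stages mirroring the outline in the introduction: first a discrete matching of orbits, then its upgrade to a homeomorphism of suspensions. I would begin by transferring all of the data to the two-sided shifts. Pulling back $k,l,b,n$ along the factor map $\x\mapsto\x_{[0,\infty)}$ from $\X$ to $X$ (which intertwines $\tsh$ with $\osh$ on the nose) produces continuous functions $K,L,B\colon\X\to\ZZ$ and $N\colon\X\to\NN_0$; since $\osh(\x_{[0,\infty)})=(\tsh(\x))_{[0,\infty)}$, the hypothesis $l-k=n+b-b\circ\osh$ becomes
\begin{equation*}
L(\x)-K(\x)=N(\x)+B(\x)-B(\tsh(\x)),\qquad \x\in\X,
\end{equation*}
with $N\ge 0$, and symmetrically for the primed data on $\Y$. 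Thus the integer-valued cocycle $L-K$ that will govern the time reparametrisation is cohomologous, via $B$, to the nonnegative cocycle $N$.

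\emph{The two maps.} The heart of the argument is to promote $h$ to a homeomorphism $\bar h\colon\X\to\Y$ of the two-sided shifts. Iterating the cocycle identity \eqref{eq:1} shows that for fixed $\x$ the points $h(\x_{[m,\infty)})$, $m\in\ZZ$, all lie on a single $\osh[Y]$-orbit whose displacement is recorded by the partial sums of $L-K$; as $m\to-\infty$ these stabilise and single out a two-sided point, which I would take to be $\bar h(\x)$. The cocycle $(k',l')$ for $h^{-1}$ yields a candidate inverse, and the least-period-preserving hypotheses are what make the two constructions mutually inverse on the eventually periodic points. Simultaneously I would set up the reparametrisation: using the nonnegative roof $N$, interpolated to a genuine positive continuous roof by means of $B$ in the passage to suspensions, the $\tsh$-orbit of a point $\x$ of period $p$ accumulates total roof $\sum_{i=0}^{p-1}(L-K)(\tsh^i(\x))=\lp(h(\x_{[0,\infty)}))$, which by least-period-preservation matches the least period of its image under $\bar h$. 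This is exactly the compatibility needed for closed flow lines in $S\X$ to be carried to closed flow lines of the correct length in $S\Y$.

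\emph{Assembly, verification, and the main obstacle.} With $\bar h$ and the positivised roof in hand, I would define $\psi$ by sending $[(\x,t)]$ to $[(\bar h(\x),\xi(\x,t))]$, where $\xi$ is the continuous, strictly increasing reparametrisation obtained by accumulating the roof, and then check directly that $\psi$ is a well-defined homeomorphism carrying flow lines to flow lines in an orientation-preserving manner; the primed data provides $\psi^{-1}$ by the symmetric construction, supplying the positivity witnesses $v,w>0$ demanded in the definition of flow equivalence. The routine parts are continuity and the cocycle bookkeeping. The main obstacle, and the reason the hypotheses are phrased as they are, is the behaviour at eventually periodic and especially isolated periodic points: there the displacement sums must reproduce least periods \emph{exactly}, not merely up to a coboundary, so that $\bar h$ is a genuine bijection and $\psi$ identifies the corresponding closed orbits with matching lengths. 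Controlling this is precisely where least-period-preservation and the nonnegativity of $n$, with the coboundary $b$ absorbing the non-positive fluctuations of $l-k$, become indispensable, and where the argument must depart from the irreducible, isolated-point-free setting of \cite{MM}.
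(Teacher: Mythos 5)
Your overall strategy coincides with the paper's: replace $h$ by a version whose orbit displacement is the nonnegative cocycle $n$ (using $b$ to absorb the sign fluctuations of $l-k$), promote it to a map $\bvarphi:\X\to\Y$ of two-sided shifts, build an increasing reparametrisation of $\RR$ from the partial sums of $n$, and assemble these into $\psi([(\x,t)])=[(\bvarphi(\x),r_\x(t))]$. However, there is a genuine gap at the foundation of both of your constructions: you assert that, as $m\to-\infty$, the points $h(\x_{[m,\infty)})$ ``stabilise and single out a two-sided point,'' and that the roof can be made ``genuinely positive by means of $B$.'' Neither is automatic. Since $n$ takes values in $\NN_0$ and may vanish, the partial sums $m_\x(j)=\sum n(\x_{[i,\infty)})$ could a priori be bounded above or below along an orbit; in that case the prescribed coordinates of $\bar h(\x)$ would only occupy a half-line (so $\bar h(\x)\notin\Y$), and the reparametrisation $\xi(\x,\cdot)$ would fail to be a surjection of $\RR$, so $\psi$ would not be a homeomorphism. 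Adding the coboundary $B$ cannot make the roof positive, because the sum of $n$ over a periodic orbit is an invariant of the cohomology class.

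The missing step is precisely the paper's Claim showing that for every $\x\in\X$ and every $i_0$ there are $i<i_0<j$ with $n(\x_{[i,\infty)})\ne 0\ne n(\x_{[j,\infty)})$. Its proof is not routine: one first normalises $b\ge l\ge 0$ so that $\varphi=\osh[Y]^{b}\circ h$ is a well-defined \emph{finite-to-one} map satisfying $\varphi\circ\osh=\osh[Y]^{n}\circ\varphi$; if $n$ vanished along an entire forward (or backward) tail, $\varphi$ would take a single value on infinitely many distinct tails of $\x$, forcing eventual periodicity by finite-to-oneness, and then the least-period-preserving hypothesis would give $\lp(h(\x_{[j,\infty)}))=\sum n=0$, a contradiction. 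This is where finite-to-oneness and least-period-preservation enter in an essential way that your outline does not capture; your stated ``main obstacle'' (matching least periods of closed orbits exactly) is a second, separate place where least-period-preservation is used, but it is not the one that makes $\bvarphi$ and $r_\x$ well defined. A further minor point: one should not expect $\bar h$ to be a bijection of $\X$ with $\Y$; the paper only shows $\bvarphi'\circ\bvarphi$ agrees pointwise with a power of the shift, which suffices for injectivity and surjectivity of $\psi$ on the suspensions.
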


The rest of this section contains the proof of Proposition~\ref{diego}. We assume in the rest of this section that $X$, $Y$, $h$, $k$, $l$, $k'$, $l'$, $b$, $b'$, $n$, and $n'$ are as specified in the proposition. We shall construct an explicit flow equivalence $\psi:S\X\to S\Y$ from this data. 

We begin by constructing a continuous map $\varphi:X\rightarrow Y$ and establish some properties of it in Claim~\ref{lemma1} and Claim~\ref{yap}. In Claim~\ref{lemma_n} we show that the map $n$ satisfies a condition which we need in order to construct a continuous map $\bvarphi:\X\to \Y$ in Claim~\ref{proposition1}. We then prove some properties of $\bvarphi$ in Claim~\ref{lemma2}, Claim~\ref{robert} and Claim~\ref{periodic}, before we for each $\x\in\X$ construct an increasing piecewise linear homeomorphism $r_\x:\RR\to\RR$. In Claim~\ref{harrison} we show a relationship between $r_\x$ and $r_{\tsh^p(\x)}$, before we in Claim~\ref{super} finally show that there is a flow equivalence $\psi:S\X\to S\Y$ given by $\psi([(\x,t)])=[(\bvarphi(\x),r_\x(t))]$.

Since $l$ and $b$ are bounded, we can by adding a constant to $b$ if necessary, assume that $b(x)\ge l(x)$ for every $x\in X$. Similarly, we can assume that $b'(y)\ge l'(y)$ for every $y\in Y$. 

We let $\varphi:X\rightarrow Y$ be the continuous map defined by 
\begin{equation}\label{function_1}
\varphi(x)=\osh[Y]^{b(x)}(h(x))
\end{equation}
for $x\in X$.

\begin{claim}\label{lemma1} The function $\varphi$ defined in (\ref{function_1}) is finite-to-one, i.e., $|\varphi^{-1}(y)|<\infty$ for every $y\in Y$.
\end{claim}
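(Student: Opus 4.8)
The plan is to exploit compactness of $X$ to bound $b$, and finiteness of the alphabet to bound the number of $\osh[Y]^m$-preimages of any single point. First I would observe that $b$ is a continuous $\ZZ$-valued function on the compact space $X$, hence bounded; set $N:=\max_{x\in X}b(x)<\infty$, and note that $b(x)\in\{0,1,\dots,N\}$ for every $x\in X$, since by assumption $b(x)\ge l(x)\ge 0$.

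Next, fix $y\in Y$ and suppose $\varphi(x)=y$, that is, $\osh[Y]^{b(x)}(h(x))=y$. Then $h(x)$ lies in the set of $\osh[Y]^{b(x)}$-preimages of $y$, and since $0\le b(x)\le N$ we obtain $h(x)\in\bigcup_{m=0}^{N}\osh[Y]^{-m}(\{y\})$. The key finiteness input is that a point of $Y$ has at most $|\alp|$ preimages under $\osh[Y]$, because any such preimage is obtained by prepending a single symbol from the finite alphabet $\alp$; consequently $|\osh[Y]^{-m}(\{y\})|\le |\alp|^m$, and so $F_y:=\bigcup_{m=0}^{N}\osh[Y]^{-m}(\{y\})$ is a finite subset of $Y$.

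Finally, since $h$ is a homeomorphism, hence injective, the inclusion $\varphi^{-1}(y)\subseteq h^{-1}(F_y)$ yields $|\varphi^{-1}(y)|\le |F_y|<\infty$, which is the desired conclusion. I do not anticipate any genuine obstacle here; the only point requiring care is that both the bound on $b$ and the bound on the fibres of $\osh[Y]$ are \emph{uniform}, and this is precisely what compactness of $X$ together with finiteness of $\alp$ provide.
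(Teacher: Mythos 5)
Your argument is correct and is essentially the same as the paper's: both bound $b$ using continuity on the compact space $X$ together with the normalisation $b(x)\ge l(x)\ge 0$, observe that $\varphi^{-1}(y)\subseteq\bigcup_{m} h^{-1}(\osh[Y]^{-m}(y))$, and conclude by finite-to-one-ness of the powers of $\osh[Y]$ and injectivity of $h$. The only difference is that you spell out the bound $|\osh[Y]^{-m}(\{y\})|\le|\alp|^m$, which the paper leaves implicit.
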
 

\begin{proof} 
Recall that $h$ is a homeomorphism and $b$ is bounded. Let $j\in\NN_0$ be such that $0\leq b(x)\leq j$ for every $x\in X$. For $y\in Y$ we have that 
$$\varphi^{-1}(y)\subseteq \bigcup_{i=0}^j h^{-1}(\osh[Y]^{-i}(y))\,.$$
Since $\osh[Y]^{i}$ is finite-to-one, so is $\osh[Y]^i\circ h$. It follows that $\varphi$ is finite-to-one.
\end{proof}

\begin{claim}\label{yap}
For $x\in X$ we have that
\begin{equation}\label{equation_2}\varphi(\osh(x))=\osh[Y]^{n(x)}(\varphi(x))\,.
\end{equation}
\end{claim}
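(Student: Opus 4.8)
The plan is to unfold both sides using the definition $\varphi(x)=\osh[Y]^{b(x)}(h(x))$ and reduce the whole statement to the orbit-equivalence relation \eqref{eq:1}. Expanding the right-hand side gives
$$\osh[Y]^{n(x)}(\varphi(x))=\osh[Y]^{n(x)+b(x)}(h(x)),$$
while the left-hand side is $\varphi(\osh(x))=\osh[Y]^{b(\osh(x))}(h(\osh(x)))$. Thus Claim~\ref{yap} is equivalent to the identity
$$\osh[Y]^{b(\osh(x))}(h(\osh(x)))=\osh[Y]^{n(x)+b(x)}(h(x)).$$

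To obtain this I would start from \eqref{eq:1}, namely $\osh[Y]^{k(x)}(h(\osh(x)))=\osh[Y]^{l(x)}(h(x))$, and apply a single nonnegative power of $\osh[Y]$ to both sides. The exponent to use is $j:=b(\osh(x))-k(x)$, chosen precisely so that the left-hand side becomes $\osh[Y]^{b(\osh(x))}(h(\osh(x)))$. Using the hypothesis $l(x)-k(x)=n(x)+b(x)-b(\osh(x))$ one rewrites $j=n(x)+b(x)-l(x)$, whence $j+l(x)=n(x)+b(x)$; applying $\osh[Y]^{j}$ therefore turns \eqref{eq:1} verbatim into the displayed identity, and the claim follows.

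The one point that genuinely requires attention is that $\osh[Y]$ need not be invertible on a general one-sided shift space, so $\osh[Y]^{j}$ is only meaningful for $j\ge 0$; this is the step where the proof could break if handled carelessly. Hence I must verify $j=b(\osh(x))-k(x)\ge 0$. This is exactly where the normalisation $b(x)\ge l(x)$, arranged just before the claim, is used: writing $j=n(x)+\bigl(b(x)-l(x)\bigr)$ and invoking $n(x)\ge 0$ together with $b(x)\ge l(x)$ gives $j\ge 0$ immediately. Once this inequality is secured, the remainder is pure bookkeeping of exponents, with no further appeal to the dynamics.
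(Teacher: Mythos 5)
Your proposal is correct and is essentially the paper's own argument: both apply $\osh[Y]^{\,n(x)+b(x)-l(x)}$ to the cocycle identity \eqref{eq:1} and verify that this exponent is nonnegative via $n(x)\ge 0$ and the normalisation $b(x)\ge l(x)$. You have correctly isolated the only delicate point, so nothing is missing.
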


\begin{proof}
Since $b(\osh(x))=n(x)-l(x)+k(x)+b(x)$, $n(x)-l(x)+b(x)\ge b(x)-l(x)\geq 0$, and $\osh[Y]^{k(x)}(h(\osh(x)))=\osh[Y]^{l(x)}(h(x))$, it follows that
\begin{align*} \varphi(\osh(x))& =\osh[Y]^{b(\osh(x))}(h(\osh(x)))=\osh[Y]^{n(x)-l(x)+k(x)+b(x)}(h(\osh(x)))\\
& =\osh[Y]^{n(x)-l(x)+b(x)}(\osh[Y]^{k(x)}(h(\osh(x))))=\osh[Y]^{n(x)-l(x)+b(x)}(\osh[Y]^{l(x)}(h(x))) \\
 &  = \osh[Y]^{n(x)+b(x)}(h(x))=\osh[Y]^{n(x)}(\varphi(x))\,.\qedhere
\end{align*}	
\end{proof}

For $j\in\NN$ and $x\in X$, we set $n^j(x):=\sum_{i=1}^{j}n(\osh^{i-1}(x))$ and $n^0(x):=0$.
Observe that then
\begin{equation}\label{eq:56}
	\varphi(\osh^j(x))=\varphi(\osh(\osh^{j-1}(x)))=\osh[Y]^{n(\osh^{j-1}(x))}(\varphi(\osh^{j-1}(x)))=\cdots=\osh[Y]^{n^j(x)}(\varphi(x))\,,
\end{equation}
by an iteration of (\ref{equation_2}).

\begin{claim}\label{lemma_n}
Given $\x\in \X$ and $i_0\in\ZZ$, there exist $i,j\in \ZZ$ such that $i<i_0$ and $n(\x_{[i,\infty)})\neq 0$ and $j>i_0$ and $n(\x_{[j,\infty)})\neq 0$.
\end{claim}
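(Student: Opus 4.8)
My plan is to reformulate the statement as the assertion that the index set $S=\{k\in\ZZ:n(\x_{[k,\infty)})\neq 0\}$ is unbounded both above and below, and then to argue by contradiction. The two directions being symmetric, I would treat in detail the case in which $S$ is bounded above, i.e.\ there is an $N\in\ZZ$ with $n(\x_{[j,\infty)})=0$ for every $j\geq N$, and only indicate the modifications needed when $S$ is bounded below.

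The first step is to deduce that $\varphi$ is constant on the family $\{\x_{[j,\infty)}:j\geq N\}$. For $N\leq i<j$ we have $\x_{[j,\infty)}=\osh^{j-i}(\x_{[i,\infty)})$, so by \eqref{eq:56} the values $\varphi(\x_{[j,\infty)})$ and $\varphi(\x_{[i,\infty)})$ differ by the power $\osh[Y]^{n^{j-i}(\x_{[i,\infty)})}$; since $n^{j-i}(\x_{[i,\infty)})=\sum_{k=i}^{j-1}n(\x_{[k,\infty)})$ and every index $k$ here satisfies $k\geq N$, this exponent is $0$ and the two values agree.

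Next I would invoke that $\varphi$ is finite-to-one (Claim~\ref{lemma1}): an infinite family on which $\varphi$ is constant must in fact be a finite subset of $X$, so there are $N\leq j_1<j_2$ with $\x_{[j_1,\infty)}=\x_{[j_2,\infty)}$. Setting $w:=\x_{[j_1,\infty)}$ exhibits $w$ as a periodic point, so that $p:=\lp(w)$ satisfies $\osh^p(w)=w$ and $\osh^i(w)=\x_{[j_1+i,\infty)}$ for all $i$. Finally I would apply the least period preserving hypothesis to the (eventually) periodic point $w$ and substitute $l-k=n+b-b\circ\osh$; the $b$-terms telescope to $b(w)-b(\osh^p(w))=0$, leaving
\[
\lp(h(w))=\sum_{i=0}^{p-1}\bigl(l(\osh^i(w))-k(\osh^i(w))\bigr)=\sum_{i=0}^{p-1}n(\x_{[j_1+i,\infty)}).
\]
Every summand vanishes because $j_1+i\geq N$, so $\lp(h(w))=0$; but $w$ is eventually periodic, hence so is $h(w)$, whence $\lp(h(w))\geq 1$, a contradiction.

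For $S$ bounded below one fixes an $N$ with $n(\x_{[i,\infty)})=0$ for all $i\leq N$ and runs the identical argument on the family $\{\x_{[i,\infty)}:i\leq N+1\}$, extracting a repeat $i_1<i_2\leq N+1$ and the periodic point $w=\x_{[i_1,\infty)}$. The step I expect to require the most care is precisely this passage from a vanishing tail of $n$ to an honest periodic point: one must combine the finiteness coming from Claim~\ref{lemma1} with the shift relation correctly, and, crucially, check that all indices $j_1+i$ (respectively $i_1+i$) arising from the periodic orbit of $w$ still lie in the region where $n$ is known to vanish, so that the telescoped sum genuinely collapses to $0$. Everything else is bookkeeping with \eqref{eq:56} and the cocycle identity $l-k=n+b-b\circ\osh$.
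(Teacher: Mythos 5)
Your proof is correct, and its first half (unboundedness of $S$ above) is essentially identical to the paper's: assume the tail of $n$ vanishes, use \eqref{eq:56} to see $\varphi$ is constant on the corresponding family of right tails, invoke Claim~\ref{lemma1} to extract a periodic point, and contradict the least period preserving hypothesis via the telescoping of $l-k=n+b-b\circ\osh$ (the paper leaves this telescoping implicit; you make it explicit, which is a small improvement). Where you genuinely diverge is the second half. The paper does \emph{not} rerun the periodic-point argument on the left tails: it uses finiteness of $\{\x_{[i,\infty)}:i<i_0\}$ to conclude that the two-sided point $\x$ itself is periodic, then bootstraps from the already-proved first half by shifting a right index $i$ with $n(\x_{[i,\infty)})\neq 0$ back by multiples of the period to land below $i_0$. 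Your version instead treats the two directions symmetrically, extracting a repeat $i_1<i_2\leq N+1$ among the left tails and applying the least-period contradiction directly to $w=\x_{[i_1,\infty)}$; the point you correctly flag — that $\lp(w)$ divides $i_2-i_1$, so all indices $i_1,\dots,i_1+\lp(w)-1$ stay at or below $i_2-1\leq N$ where $n$ vanishes — is exactly what makes this work. Your route is arguably cleaner (one argument run twice rather than two different arguments), while the paper's avoids re-verifying the index bookkeeping at the cost of a short digression on periodicity of the two-sided point. Both are complete proofs.
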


\begin{proof}
We first show that $n(\x_{[j,\infty)})\neq 0$ for some $j>i_0$. Assume, for contradiction, that $n(\x_{[j,\infty)})=0$ for every $j>i_0$. Then $n^{j-i_0}(\x_{[i_0,\infty)})=\sum_{i=i_0}^{j-1}n(\x_{[i,\infty)})=0$ for every $j>i_0$. An application of \eqref{eq:56} gives us that
$$\varphi(\x_{[j,\infty)})=\varphi(\osh^{j-i_0}(\x_{[i_0,\infty)]}))=\osh[Y]^{n^{j-i_0}(\x_{[i_0,\infty)})}(\varphi(\x_{[i_0,\infty)}))=\varphi(\x_{[i_0,\infty)})$$
for every $j>i_0$, and since $\varphi$ is finite-to-one (Claim~\ref{lemma1}), it follows that $\{\x_{[j,\infty)}:j>i_0\}$ is finite, and thus that $\x_{[j,\infty)}$ is periodic for some $j>i_0$. But then
$$\lp(h(\x_{[j,\infty)}))=\sum_{i=0}^{\lp(\x_{[j,\infty)})-1}\left(l(\x_{[i+j,\infty)})-k(\x_{[i+j,\infty)})\right)=\sum_{i=0}^{\lp(\x_{[j,\infty)})-1}n(\x_{[i+j,\infty)})=0\ ,$$
which cannot be the case.

Similarly, if $n(\x_{[i,\infty)})= 0$ for every $i<i_0$, then 
$$\varphi(\x_{[i_0,\infty)})=\varphi(\osh^{i_0-i}(\x_{[i,\infty)]}))=\osh[Y]^{n^{i_0-i}(\x_{[i,\infty)})}(\varphi(\x_{[i,\infty)}))=\varphi(\x_{[i,\infty)})$$
for every $i<i_0$, and since $\varphi$ is finite-to-one, it follows that $\{\x_{[i,\infty)}:i<i_0\}$ is finite, and thus that $\x$ is periodic. It follows from the first part of the proof that there is an $i\in \NN_0$ such that $n(\x_{[i,\infty)})\neq 0$, but since $\x$ is periodic, there is a $j\in\NN$ such that $\x_{[-j,\infty)}=\x_{[i,\infty)}$ from which it follows that $n(\x_{[-j,\infty)})=n(\x_{[i,\infty)})\neq 0$.
\end{proof}

For $\x\in\X$ and $j\in\ZZ$, we set
\begin{equation*}
m_\x(j):=
\begin{cases}
-\sum_{i=1}^{-j} n(\x_{[-i,\infty)})&\text{if }j<0,\\
0&\text{if }j=0,\\
\sum_{i=0}^{j-1} n(\x_{[i,\infty)})&\text{if }j>0.
\end{cases}
\end{equation*}
Then $m_\x:\ZZ\to\ZZ$ is a weakly increasing function (i.e., $m_\x(i)\le m_\x(j)$ if $i<j$), and it follows from Claim~\ref{lemma_n} that $m_\x(j)\to\pm\infty$ for $j\to\pm\infty$.

It is straightforward to check that if $\x\in\X$ and $i,j\in\ZZ$, then
\begin{equation}\label{bobs}
m_\x(i+j)=m_\x(i)+m_{\tsh^i(\x)}(j).
\end{equation}

\begin{claim}\label{proposition1} 
There is a continuous map $\bvarphi:\X\to \Y$ such that $\bvarphi(\x)_{[m_\x(-i),\infty)}=\varphi(\x_{[-i,\infty)})$ for $i\in\NN_0$.
\end{claim}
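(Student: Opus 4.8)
The plan is to build $\bvarphi(\x)$ coordinate by coordinate, using the prescribed one-sided tails $\varphi(\x_{[-i,\infty)})$ as local data, and to verify that these tails fit together coherently as $i$ grows. First I would record the key coherence relation. For $i<i'$ in $\NN_0$ we have $\x_{[-i,\infty)}=\osh^{i'-i}(\x_{[-i',\infty)})$, so \eqref{eq:56} gives
$$\varphi(\x_{[-i,\infty)})=\osh[Y]^{n^{i'-i}(\x_{[-i',\infty)})}(\varphi(\x_{[-i',\infty)})),$$
and reindexing the sum defining $n^{i'-i}(\x_{[-i',\infty)})$ identifies the exponent with $m_\x(-i)-m_\x(-i')\ge 0$. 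Thus $\varphi(\x_{[-i,\infty)})$ is precisely $\varphi(\x_{[-i',\infty)})$ shifted forward by $m_\x(-i)-m_\x(-i')$ places, which is exactly the compatibility demanded by the requirement $\bvarphi(\x)_{[m_\x(-i),\infty)}=\varphi(\x_{[-i,\infty)})$.

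With this in hand I would define, for each $p\in\ZZ$, the value $\bvarphi(\x)_p:=(\varphi(\x_{[-i,\infty)}))_{p-m_\x(-i)}$, where $i\in\NN_0$ is chosen large enough that $m_\x(-i)\le p$; such $i$ exists because $m_\x(-i)\to-\infty$ as $i\to\infty$ (established from Claim~\ref{lemma_n}). The coherence relation shows this value does not depend on the chosen $i$, so $\bvarphi(\x)$ is a well-defined element of $\alp^\ZZ$. To see $\bvarphi(\x)\in\Y$, observe that by construction $\bvarphi(\x)_{[m_\x(-i),\infty)}=\varphi(\x_{[-i,\infty)})\in Y$ for every $i$; since any left tail of $\bvarphi(\x)$ is a forward shift of one of these and $Y$ is shift-invariant, every tail lies in $Y$, whence $\bvarphi(\x)\in\Y$. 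The asserted identity then holds by definition.

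The main work is continuity, and here the subtlety is that the starting index $m_\x(-i)$ itself depends on $\x$. The resolution is that $n\colon X\to\NN_0$ is continuous into a discrete space, hence locally constant, while $\x\mapsto\x_{[-s,\infty)}$ is continuous; therefore each $\x\mapsto n(\x_{[-s,\infty)})$ is locally constant, and for fixed $i$ so is $\x\mapsto m_\x(-i)$. I would then fix $\x$ and a window $[-N,N]$, choose $i$ with $m_\x(-i)<-N$, and produce a neighbourhood of $\x$ on which simultaneously $m_{\x'}(-i)=m_\x(-i)$ (local constancy) and $\varphi(\x'_{[-i,\infty)})$ agrees with $\varphi(\x_{[-i,\infty)})$ on the finite coordinate block $[0,N-m_\x(-i)]$ (continuity of $\varphi$ composed with $\x\mapsto\x_{[-i,\infty)}$). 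On this neighbourhood $\bvarphi(\x')$ and $\bvarphi(\x)$ agree on $[-N,N]$, giving continuity at $\x$. I expect this continuity argument to be the only genuine obstacle, and it rests entirely on the local constancy of $\x\mapsto m_\x(-i)$ for fixed $i$.
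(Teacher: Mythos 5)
Your proposal is correct and follows essentially the same route as the paper: the coherence identity you derive is just the iterated form of the one-step relation $\osh[Y]^{n(\x_{[-i-1,\infty)})}(\varphi(\x_{[-i-1,\infty)}))=\varphi(\x_{[-i,\infty)})$ that the paper uses for existence, uniqueness comes from $m_\x(-i)\to-\infty$ in both cases, and the continuity argument via local constancy of $\x\mapsto m_\x(-i)$ for fixed $i$ together with continuity of $\varphi$ is exactly the paper's. You simply spell out the coordinate-by-coordinate construction and the finite-window continuity check in more detail than the paper does.
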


\begin{proof}
Let $\x\in \X$. Since $m_\x(-i)\to -\infty$ for $i\to \infty$, it follows that there is at most one $\y\in\Y$ such that $\y_{[m_\x(-i),\infty)}=\varphi(\x_{[-i,\infty)})$ for $i\in\NN_0$. That there is such a $\y\in\Y$ follows from the fact that
\begin{equation*} 
	\osh[Y]^{n(\x_{[-i-1,\infty)})}(\varphi(\x_{[-i-1,\infty)}))  =\varphi(\osh(\x_{[-i-1,\infty)})) =\varphi(\x_{[-i,\infty)}) \,
\end{equation*}
for $i\in\NN_0$. 

Since, for fixed $i\in\NN_0$, the function $\x\mapsto m_\x(-i)$ is a continuous and thus locally constant function from $\X$ to $\ZZ$, and $\varphi$ is continuous, it follows that $\bvarphi$ is continuous. 
\end{proof}

\begin{claim}\label{lemma2} 
$\tsh[\Y]^{m_\x(j)}(\bvarphi(\x))=\bvarphi(\tsh^j(\x))$ for $\x\in\X$ and $j\in\ZZ$.
\end{claim}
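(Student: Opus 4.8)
The plan is to prove the identity from the uniqueness characterisation of $\bvarphi$ established in Claim~\ref{proposition1} together with the cocycle relation \eqref{bobs} for $m_\x$. Both $\tsh[\Y]^{m_\x(j)}(\bvarphi(\x))$ and $\bvarphi(\tsh^j(\x))$ are elements of $\Y$, so to prove they coincide it suffices to show that their left-infinite tails agree along a sequence of starting indices tending to $-\infty$; since any two elements of $\alp^\ZZ$ that agree on tails $[c_i,\infty)$ with $c_i\to-\infty$ are equal, this finishes the argument. Throughout I would use the elementary fact that for $\y\in\Y$ and $p\in\ZZ$ one has $(\tsh[\Y]^p(\y))_{[a,\infty)}=\y_{[a+p,\infty)}$ for every $a\in\ZZ$.

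Fixing $\x\in\X$ and $j\in\ZZ$, I would first evaluate the tail of the left-hand side starting at the index $m_{\tsh^j(\x)}(-i)$. The shift formula above gives $(\tsh[\Y]^{m_\x(j)}(\bvarphi(\x)))_{[m_{\tsh^j(\x)}(-i),\infty)}=\bvarphi(\x)_{[m_{\tsh^j(\x)}(-i)+m_\x(j),\infty)}$, and the cocycle relation \eqref{bobs}, with its two arguments set equal to $j$ and $-i$, rewrites the starting index as $m_\x(j-i)$, so this tail equals $\bvarphi(\x)_{[m_\x(j-i),\infty)}$.

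Next I would evaluate the tail of the right-hand side at the same index. By the defining property of $\bvarphi$ in Claim~\ref{proposition1} applied to $\tsh^j(\x)$, and using $(\tsh^j(\x))_{[-i,\infty)}=\x_{[j-i,\infty)}$, I obtain $\bvarphi(\tsh^j(\x))_{[m_{\tsh^j(\x)}(-i),\infty)}=\varphi(\x_{[j-i,\infty)})$. It then remains to match this against the left-hand side computed above. For every $i\in\NN_0$ with $i\ge\max(0,j)$ we have $j-i\le 0$, so writing $i'=i-j\in\NN_0$ and invoking the defining property of $\bvarphi(\x)$ gives $\bvarphi(\x)_{[m_\x(j-i),\infty)}=\bvarphi(\x)_{[m_\x(-i'),\infty)}=\varphi(\x_{[-i',\infty)})=\varphi(\x_{[j-i,\infty)})$. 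Hence both tails equal $\varphi(\x_{[j-i,\infty)})$ for all sufficiently large $i$.

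Finally, since $m_{\tsh^j(\x)}(-i)\to-\infty$ as $i\to\infty$ (by Claim~\ref{lemma_n}), the elements $\tsh[\Y]^{m_\x(j)}(\bvarphi(\x))$ and $\bvarphi(\tsh^j(\x))$ agree on left-infinite tails whose starting points tend to $-\infty$, and are therefore equal. The argument is essentially bookkeeping; the only point requiring care is the correct application of \eqref{bobs} to line up the index $m_{\tsh^j(\x)}(-i)+m_\x(j)$ with $m_\x(j-i)$, and the restriction to $i\ge\max(0,j)$ so that $j-i$ is nonpositive and the defining relation for $\bvarphi(\x)$, which is stated for nonpositive shifts $-i'$, can be invoked. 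No genuinely hard step arises.
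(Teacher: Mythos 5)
Your proof is correct and follows essentially the same route as the paper's: both arguments compare the left-infinite tails of the two sides at indices tending to $-\infty$, using the defining property of $\bvarphi$ from Claim~\ref{proposition1} together with the cocycle identity \eqref{bobs}. The only difference is organizational — the paper first proves the identity for $j\ge 0$ and then deduces the case $j<0$ by substituting $\x'=\tsh^{j}(\x)$, whereas you treat all $j\in\ZZ$ uniformly by restricting to $i\ge\max(0,j)$ — and both versions are sound.
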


\begin{proof}
Let $\x'\in\X$ and $i,j'\in\NN_0$. It follows from \eqref{bobs} that
\begin{align*}
	(\tsh[\Y]^{-m_{\x'}(j')}(\bvarphi(\tsh^{j'}(\x'))))_{[m_{\x'}(-i),\infty)}
	&=(\bvarphi(\tsh^{j'}(\x')))_{[m_{\x'}(-i)-m_{\x'}(j'),\infty)}\\
	&=(\bvarphi(\tsh^{j'}(\x')))_{[m_{\tsh^{j'}(\x')}(-i-j'),\infty)}\\
	&=\varphi((\tsh^{j'}(\x')_{[-i-j',\infty)}))\\
	&=\varphi(\x'_{[-i,\infty)})\\
	&=\bvarphi(\x')_{[m_{\x'}(-i),\infty)}.
\end{align*}
Thus, 
\begin{equation}\label{eq:33}
	\tsh[\Y]^{-m_{\x'}(j')}(\bvarphi(\tsh^{j'}(\x')))=\bvarphi(\x').
\end{equation}
If $j\ge 0$, then an application of \eqref{eq:33} with $\x'=\x$ and $j'=j$ gives us that $\tsh[\Y]^{m_\x(j)}(\bvarphi(\x))=\bvarphi(\tsh^j(\x))$, and if $j<0$, then an application of \eqref{eq:33} with $\x'=\tsh^{j}(\x)$ and $j'=-j$ gives us together with \eqref{bobs} that
\begin{equation*}
\bvarphi(\tsh^{j}(\x))
=\tsh[\Y]^{-m_{\tsh^{j}(\x)}(-j)}(\bvarphi(\tsh^{-j}(\tsh^{j}(\x))))
=\tsh[\Y]^{-m_{\tsh^{j}(\x)}(-j)}(\bvarphi(\x))
=\tsh[\Y]^{m_{\x}(j)}(\bvarphi(\x)).
\end{equation*}
\end{proof}

Similarly to how we constructed $\varphi$, $m_\x$ and $\bvarphi$, we can for each $\y\in\Y$ construct a weakly increasing function $m'_\y:\ZZ\to\ZZ$ and continuous functions $\varphi':Y\to X$ and $\bvarphi':\Y\to\X$ such that
\begin{equation*}
m'_\y(j)=
\begin{cases}
-\sum_{i=1}^{-j} n'(\y_{[-i,\infty)})&\text{if }j<0,\\
0&\text{if }j=0,\\
\sum_{i=0}^{j-1} n'(\y_{[i,\infty)})&\text{if }j>0,
\end{cases}
\end{equation*}
$\varphi'(y)=\osh^{b'(y)}(h^{-1}(y))$, and $\bvarphi'(\y)_{[m'_\y(-i),\infty)}=\varphi'(\y_{[-i,\infty)})$ for $y\in Y$, $\y\in\Y$, $i\in\NN_0$, and $j\in\ZZ$.

For $j\in\NN$ and $y\in Y$, we set $(n')^j(y):=\sum_{i=1}^{j}n'(\osh[Y]^{i-1}(y))$ and $(n')^0(y):=0$.

\begin{claim} \label{robert}
Given $\x\in\X$ and $\y\in\Y$, there exist $d,d'\in\ZZ$ such that $\bvarphi'(\bvarphi(\x))=\tsh^d(\x)$ and $\bvarphi(\bvarphi'(\y))=\tsh[\Y]^{d'}(\y)$.
\end{claim}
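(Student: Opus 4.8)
The plan is to reduce the two-sided identity to a one-sided shift computation and then ``integrate'' it along the orbit. The key one-sided fact is that for every $x\in X$,
\[
\varphi'(\varphi(x))=\osh^{N(x)}(x),\qquad N(x):=(n')^{b(x)}(h(x))+b'(h(x))\in\NN_0,
\]
which I would prove by writing $\varphi(x)=\osh[Y]^{b(x)}(h(x))$, pulling $\osh[Y]^{b(x)}$ through $\varphi'$ by the $\varphi'$-analogue of \eqref{eq:56}, using $\varphi'(h(x))=\osh^{b'(h(x))}(h^{-1}(h(x)))$, and finally $h^{-1}(h(x))=x$; the resulting map $N\colon X\to\NN_0$ is continuous.

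Fix $\x\in\X$ and set $\y:=\bvarphi(\x)$. Given $i\in\NN_0$, choose $i'\in\NN_0$ with $m_\x(-i')\le -i$ and put $p:=-i-m_\x(-i')\ge 0$, so that $\y_{[-i,\infty)}=\osh[Y]^{p}(\varphi(\x_{[-i',\infty)}))$. Starting from the defining relation $\bvarphi'(\y)_{[m'_\y(-i),\infty)}=\varphi'(\y_{[-i,\infty)})$ and applying the $\varphi'$-analogue of \eqref{eq:56} together with the one-sided formula above, and simplifying $m'_\y(-i)$ by its definition and \eqref{bobs}, I expect to obtain that $\bvarphi'(\y)_n=\x_{n+D(i')}$ for all $n\ge m'_\y(-i)$, where
\[
D(i'):=-i'+b'(h(\x_{[-i',\infty)}))+(n')^{\,b(\x_{[-i',\infty)})-m_\x(-i')}(h(\x_{[-i',\infty)})),
\]
a term $(n')^{p}(\varphi(\x_{[-i',\infty)}))$ cancelling when the offset is formed. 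Since $m'_\y(-i)\to-\infty$ as $i\to\infty$ (the $\varphi'$-analogue of the remark following Claim~\ref{lemma_n}), it then suffices to show that $D(i')$ is eventually constant; its eventual value $d$ gives $\bvarphi'(\bvarphi(\x))=\tsh^d(\x)$.

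The heart of the proof is therefore the constancy $D(i'+1)=D(i')$. Writing $u:=\x_{[-i'-1,\infty)}$, so that $\x_{[-i',\infty)}=\osh(u)$, and using $b(\osh(u))=b(u)+n(u)+k(u)-l(u)$ (from the proof of Claim~\ref{yap}), $m_\x(-i'-1)=m_\x(-i')-n(u)$, and the orbit relation $\osh[Y]^{k(u)}(h(\osh(u)))=\osh[Y]^{l(u)}(h(u))$, the difference $D(i'+1)-D(i')$ collapses. Rewriting each $(n')$-sum through the primed identity $n'=l'-k'-b'+b'\circ\osh[Y]$ makes all $b'$-terms telescope away, and, since the two orbit segments terminate at the common point $z:=\osh[Y]^{l(u)}(h(u))=\osh[Y]^{k(u)}(h(\osh(u)))$, I am left with
\[
D(i'+1)-D(i')=-1+\sum_{s=0}^{l(u)-1}(l'-k')(\osh[Y]^s(h(u)))-\sum_{s=0}^{k(u)-1}(l'-k')(\osh[Y]^s(h(\osh(u)))).
\]
Iterating \eqref{eq:2} identifies each sum as the net amount by which $h^{-1}$ moves the corresponding point forward onto $h^{-1}(z)$; because $h^{-1}(h(u))=u$ and $h^{-1}(h(\osh(u)))=\osh(u)$ differ by exactly one application of $\osh$, these two amounts differ by $1$, so the bracketed difference is $1$ and $D(i'+1)-D(i')=0$ whenever the forward orbit of $u$ is injective.

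The main obstacle is this last step when $u$ is eventually periodic, for then \eqref{eq:2} only determines the two displacements modulo the least period, and $D$ may fail to be constant for small $i'$. I would dispose of this by noting that all quantities entering $D(i'+1)-D(i')$ are bounded uniformly in $i'$ (as $k,l,k',l',b,b'$ are bounded on the compact spaces $X$ and $Y$), while the least $N_0\in\NN_0$ with $\osh^{N_0}(u)$ periodic tends to $\infty$ as $i'\to\infty$ unless $\x$ itself is periodic; hence for all large $i'$ the relevant forward iterates lie below this pre-period, the displacement identity holds exactly, and $D$ is eventually constant. When $\x$ is periodic, $\tsh^{D(i')}(\x)$ does not depend on the admissible value of $D(i')$, so $\bvarphi'(\bvarphi(\x))=\tsh^d(\x)$ holds in every case. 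The companion identity $\bvarphi(\bvarphi'(\y))=\tsh[\Y]^{d'}(\y)$ follows by the symmetric argument, interchanging $X$ with $Y$ and the primed with the unprimed data.
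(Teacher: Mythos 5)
Your proposal is correct, and its first two steps coincide with the paper's: you establish the one-sided identity $\varphi'(\varphi(x))=\osh^{N(x)}(x)$ with $N(x)=(n')^{b(x)}(h(x))+b'(h(x))$ and then convert the defining relations for $\bvarphi$ and $\bvarphi'$ into the tail identification $\bvarphi'(\bvarphi(\x))_{[M,\infty)}=\x_{[M+D,\infty)}$, exactly as in the paper's chain of equalities (the paper indexes the left tails by $m'_{\bvarphi(\x)}(m_\x(-j))$ rather than by arbitrary $-i$, but this is cosmetic). Where you genuinely diverge is in how the offsets are glued into a single two-sided shift. The paper never computes $D(i'+1)-D(i')$: it argues softly that if $\x$ is periodic then the left tails of $\bvarphi'(\bvarphi(\x))$ range over finitely many one-sided sequences, so $\bvarphi'(\bvarphi(\x))$ is itself periodic and is determined by its $[0,\infty)$-tail, while if $\x$ is not periodic then some left tail $\x_{[e_j,\infty)}$ is non-periodic and rigidly pins down the alignment for all further tails. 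You instead prove eventual constancy of $D$ by an explicit cocycle computation; I checked that your telescoping does work out to $D(i'+1)-D(i')=-1+\sum_{s=0}^{l(u)-1}(l'-k')(\osh[Y]^s(h(u)))-\sum_{s=0}^{k(u)-1}(l'-k')(\osh[Y]^s(h(\osh(u))))$ (using $b\ge l$, which the paper arranges, to split the $(n')$-sums at the common point $z$), and the displacement identity from iterating \eqref{eq:2} gives $\osh^{L_1+K_2}(u)=\osh^{K_1+L_2+1}(u)$, which forces the bracketed difference to equal $1$ exactly when the relevant forward iterates of $u$ are below the pre-period; your boundedness argument correctly shows this holds for all large $i'$ when $\x$ is not periodic, and your treatment of the periodic case (all admissible offsets agree modulo $\lp(\x)$) is also sound. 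The trade-off is clear: your route is considerably heavier and must confront the eventual-periodicity obstruction head-on, but it yields the sharper conclusion that the offset function is literally eventually constant and makes the mechanism behind the claim explicit, whereas the paper's dichotomy argument sidesteps the entire computation at the cost of being non-constructive about where the tails stabilise.
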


\begin{proof}
Let $\x\in\X$. Then we have for $j\in\NN_0$ that 
\begin{align*}
	\bvarphi'(\bvarphi(\x))_{[m'_{\bvarphi(\x)}(m_\x(-j)),\infty)}
	&=\varphi'(\bvarphi(\x)_{[m_\x(-j),\infty)})\\
	&=\varphi'(\varphi(\x_{[-j,\infty)}))\\
	&=\varphi'(\osh[Y]^{b(\x_{[-j,\infty)})}(h(\x_{[-j,\infty)})))\\
	&=\osh^{(n')^{b(\x_{[-j,\infty)})}(h(\x_{[-j,\infty)}))}(\varphi'(h(\x_{[-j,\infty)})))\\
	&=\osh^{(n')^{b(\x_{[-j,\infty)})}(h(\x_{[-j,\infty)}))+b'(h(\x_{[-j,\infty)}))}(\x_{[-j,\infty)})\\
	&=\x_{[-j+(n')^{b(\x_{[-j,\infty)})}(h(\x_{[-j,\infty)}))+b'(h(\x_{[-j,\infty)})),\infty)}.
\end{align*}
Let us first set $d=(n')^{b(\x_{[0,\infty)})}(h(\x_{[0,\infty)}))+b'(h(\x_{[0,\infty)}))$. By letting $j=0$ we see that $\bvarphi'(\bvarphi(\x))_{[0,\infty)}=\x_{[d,\infty)}$. Since $m_\x(-j)\to -\infty$ as $j\to\infty$, it follows that 
$$m'_{\bvarphi(\x)}(m_\x(-j))\to -\infty \text{ as } j\to\infty,$$
and since $b$ and $b'$ are bounded functions, and $(n')^i$ is bounded for each $i\in\NN_0$, we get that 
$$-j+(n')^{b(\x_{[-j,\infty)})}(h(\x_{[-j,\infty)}))+b'(h(\x_{[-j,\infty)}))\to -\infty \text{ as } j\to\infty.$$
It follows that if $\x$ is periodic, then $\bvarphi'(\bvarphi(\x))$ is also periodic, and $\bvarphi'(\bvarphi(\x))=\tsh^d(\x)$.

Suppose then that $\x$ is not periodic. Then there is a $j\in\NN_0$ such that 
$$\bvarphi'(\bvarphi(\x))_{[m'_{\bvarphi(\x)}(m_\x(-j)),\infty)}=\x_{[-j+(n')^{b(\x_{[-j,\infty)})}(h(\x_{[-j,\infty)}))+b'(h(\x_{[-j,\infty)})),\infty)},$$
is not periodic. It follows that if we now set 
$$d=-m'_{\bvarphi(\x)}(m_\x(-j))-j+(n')^{b(\x_{[-j,\infty)})}(h(\x_{[-j,\infty)}))+b'(h(\x_{[-j,\infty)})),$$
then $\bvarphi'(\bvarphi(\x))=\tsh^d(\x)$.

That there for $\y\in\Y$ is a $d'\in\ZZ$ such that $\bvarphi(\bvarphi'(\y))=\tsh[\Y]^{d'}(\y)$, can be proved in a similar way.
\end{proof}

\begin{claim} \label{periodic}
Let $\x\in\X$. Then $\bvarphi(\x)$ is periodic if and only if $\x$ is, in which case $\lp(\bvarphi(\x))=m_{\x}(\lp(\x))$.
\end{claim}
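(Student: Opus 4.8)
The plan is to separate the qualitative statement (that $\bvarphi$ preserves periodicity) from the quantitative one (the least period formula), using Claim~\ref{lemma2} and Claim~\ref{robert} for the former and the least period preserving hypothesis for the latter. For the forward implication, if $\x$ is periodic with $p:=\lp(\x)$, then Claim~\ref{lemma2} with $j=p$ gives $\tsh[\Y]^{m_{\x}(p)}(\bvarphi(\x))=\bvarphi(\tsh^{p}(\x))=\bvarphi(\x)$, so $\bvarphi(\x)$ is periodic once I check $m_{\x}(p)>0$. This holds because $i\mapsto n(\x_{[i,\infty)})$ is $p$-periodic (as $\tsh^{p}(\x)=\x$ forces $\x_{[i+p,\infty)}=\x_{[i,\infty)}$) and nonnegative, while by Claim~\ref{lemma_n} it is nonzero for at least one index in a period; hence $m_{\x}(p)=\sum_{i=0}^{p-1}n(\x_{[i,\infty)})>0$, so $\bvarphi(\x)$ is periodic and $\lp(\bvarphi(\x))\mid m_{\x}(p)$. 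For the converse I would invoke Claim~\ref{robert}: if $\bvarphi(\x)$ is periodic, then the construction symmetric to ours applied to $\bvarphi'$ shows that $\bvarphi'(\bvarphi(\x))$ is periodic, and since $\bvarphi'(\bvarphi(\x))=\tsh^{d}(\x)$ for some $d\in\ZZ$, the point $\x$ is periodic as well.

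For the exact value $\lp(\bvarphi(\x))=m_{\x}(p)$ I would pass through the one-sided restrictions and the cocycle bookkeeping. The key auxiliary fact is a \emph{synchronization lemma}: for any two-sided periodic point $\w$ one has $\lp(\w_{[0,\infty)})=\lp(\w)$, where the left-hand side is the one-sided least period. The nontrivial inclusion is that one-sided $q$-periodicity $\osh^{q}(\w_{[0,\infty)})=\w_{[0,\infty)}$ forces $\tsh^{q}(\w)=\w$; this follows by propagating $\w_{j+q}=\w_{j}$ from $j\ge 0$ to all $j\in\ZZ$, using two-sided periodicity to shift any negative index into the nonnegative range (for $j<0$ pick $N$ with $j+N\lp(\w)\ge 0$ and chain $\w_{j}=\w_{j+N\lp(\w)}=\w_{j+N\lp(\w)+q}=\w_{j+q}$). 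Granting this lemma, I would compute, for periodic $\x$,
\begin{align*}
\lp(\bvarphi(\x))=\lp(\bvarphi(\x)_{[0,\infty)})=\lp\bigl(\osh[Y]^{b(\x_{[0,\infty)})}(h(\x_{[0,\infty)}))\bigr)=\lp(h(\x_{[0,\infty)})),
\end{align*}
using the synchronization lemma, the identity $\bvarphi(\x)_{[0,\infty)}=\varphi(\x_{[0,\infty)})$ from Claim~\ref{proposition1} with $i=0$, and the fact that the one-sided least period is unchanged by applying a power of $\osh[Y]$. On the other hand, the synchronization lemma also gives $\lp(\x_{[0,\infty)})=p$, so the least period preserving hypothesis yields $\lp(h(\x_{[0,\infty)}))=\sum_{i=0}^{p-1}\bigl(l(\x_{[i,\infty)})-k(\x_{[i,\infty)})\bigr)$; substituting $l(x)-k(x)=n(x)+b(x)-b(\osh(x))$ and telescoping the $b$-terms over the period (they cancel since $\x_{[p,\infty)}=\x_{[0,\infty)}$) collapses this to $\sum_{i=0}^{p-1}n(\x_{[i,\infty)})=m_{\x}(p)$. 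Comparing the two computations gives $\lp(\bvarphi(\x))=m_{\x}(p)$.

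I expect the main obstacle to be the synchronization lemma together with matching the summation ranges: the least period preserving identity sums $l-k$ over a full \emph{one-sided} period of $\x_{[0,\infty)}$, whereas the telescoping naturally runs over the \emph{two-sided} period $p$, so the whole chain collapses correctly only once one knows $\lp(\x_{[0,\infty)})=\lp(\x)$ (and, for the other side, that $\lp(\bvarphi(\x)_{[0,\infty)})=\lp(\bvarphi(\x))$). A slightly more pedestrian alternative would sidestep part of this for $\bvarphi(\x)$: the forward direction already gives $\lp(\bvarphi(\x))\mid m_{\x}(p)$, while two-sided $\lp(\bvarphi(\x))$-periodicity forces the one-sided least period of $\bvarphi(\x)_{[0,\infty)}$ to divide $\lp(\bvarphi(\x))$, so computing that one-sided least period to be $m_{\x}(p)$ yields $m_{\x}(p)\mid\lp(\bvarphi(\x))$ and hence equality — but even this route still requires the one hard inclusion of the synchronization lemma applied to $\x$ itself.
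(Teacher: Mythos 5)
Your proposal is correct and follows essentially the same route as the paper: periodicity preservation via Claim~\ref{lemma2} (with $m_\x(\lp(\x))\ne 0$) and its symmetric counterpart together with Claim~\ref{robert}, and the least-period formula by passing to $\x_{[0,\infty)}$ and $h(\x_{[0,\infty)})$, applying the least-period-preserving hypothesis, and telescoping the $b$-terms. The only difference is that you spell out the identifications $\lp(\x_{[0,\infty)})=\lp(\x)$ and $\lp(h(\x_{[0,\infty)}))=\lp(\bvarphi(\x))$ (your synchronization lemma), which the paper dismisses with ``it is clear that.''
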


\begin{proof}
Suppose $\x$ is periodic with period $p$. Since $m_\x(j)$ goes monotonically to $\infty$ as $j\to\infty$, it follows from \eqref{bobs} that $m_\x(p)\ne 0$. It thus follows from Claim~\ref{lemma2} that $\bvarphi(\x)$ is periodic with period $m_\x(p)$. Analogously, if $\bvarphi(\x)$ is periodic with period $q$, then $\x$ is periodic with period  $m'_{\bvarphi(\x)}(q)$.

Suppose again that $\x$ is periodic. Then $\x_{[0,\infty)}$ is also periodic.
Since $h$ maps eventually periodic points to eventually periodic points, it follows that $h(\x_{[0,\infty)})$ is eventually periodic. It is clear that $\lp(\x_{[0,\infty)})=\lp(\x)$ and $\lp(h(\x_{[0,\infty)}))=\lp(\bvarphi(\x))$. Since the $h$-cocycle pair $(k,l)$ is least period preserving, it follows that
\begin{align*}
\lp(\bvarphi(\x))&=\lp(h(\x_{[0,\infty)}))
=\sum_{i=0}^{\lp(\x_{[0,\infty)})-1}(l(\osh^i(\x_{[0,\infty)}))-k(\osh^i(\x_{[0,\infty)})))\\
&=\sum_{i=0}^{\lp(\x)-1}n(\x_{[i,\infty)})=m_{\x}(\lp(\x)).\qedhere
\end{align*}
\end{proof}

Let $\x\in\X$. Let functions $i_\x,j_\x:\RR\to\ZZ$ be given by $i_\x(t):=\max\{i\le t:n(\x_{[i,\infty)})\ne 0\}$ and $j_\x(t):=\min\{j> t:n(\x_{[j,\infty)})\ne 0\}$ (it follows from Claim~\ref{lemma_n} that $i_\x(t)$ and $j_\x(t)$ are well-defined), and let
\begin{equation*}
	r_\x(t):=m_\x(i_\x(t))+\frac{t-i_\x(t)}{j_\x(t)-i_\x(t)}n(\x_{[i_\x(t),\infty)}).
\end{equation*}
Then $r_\x:\RR\to\RR$ is an increasing piecewise linear homeomorphism such that $r_\x(i)=m_\x(i)$ for those $i\in\ZZ$ for which $n(\x_{[i,\infty)})\ne 0$.

\begin{claim} \label{harrison}
$r_\x(t+p)=r_{\tsh^p(\x)}(t)+m_\x(p)$ for $\x\in\X$, $t\in\RR$ and $p\in\ZZ$.
\end{claim}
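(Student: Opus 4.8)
The plan is to reduce the identity to two ingredients: a reindexing relation for the segments $\x_{[i,\infty)}$ under the shift, and the cocycle identity \eqref{bobs} for $m_\x$. First I would record the elementary fact, immediate from the definition of the two-sided shift, that $\tsh^p(\x)_{[i,\infty)}=\x_{[i+p,\infty)}$ for all $i\in\ZZ$; applying $n$ gives $n(\tsh^p(\x)_{[i,\infty)})=n(\x_{[i+p,\infty)})$. Consequently the index sets occurring in the definitions of $i_\x$ and $j_\x$ are merely translated by $p$: performing the substitution $i\mapsto i-p$ in the maximum (resp.\ minimum) defining $i_{\tsh^p(\x)}(t)$ (resp.\ $j_{\tsh^p(\x)}(t)$) yields
\begin{equation*}
i_\x(t+p)=i_{\tsh^p(\x)}(t)+p,\qquad j_\x(t+p)=j_{\tsh^p(\x)}(t)+p,
\end{equation*}
valid for every real $t$ (not just integers).

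Next I would substitute these into the definition of $r_\x(t+p)$. Writing $a:=i_{\tsh^p(\x)}(t)$ and $b:=j_{\tsh^p(\x)}(t)$, so that $i_\x(t+p)=a+p$ and $j_\x(t+p)=b+p$, the slope factor $\frac{(t+p)-i_\x(t+p)}{j_\x(t+p)-i_\x(t+p)}=\frac{t-a}{b-a}$ coincides with the one appearing in $r_{\tsh^p(\x)}(t)$, and the coefficient $n(\x_{[i_\x(t+p),\infty)})=n(\x_{[a+p,\infty)})=n(\tsh^p(\x)_{[a,\infty)})$ likewise matches. Hence the only discrepancy between $r_\x(t+p)$ and $r_{\tsh^p(\x)}(t)$ resides in the constant terms $m_\x(a+p)$ versus $m_{\tsh^p(\x)}(a)$.

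Finally I would close this gap with \eqref{bobs}: taking $i=p$ and $j=a$ there gives $m_\x(p+a)=m_\x(p)+m_{\tsh^p(\x)}(a)$, so that $m_\x(i_\x(t+p))=m_{\tsh^p(\x)}(a)+m_\x(p)$. Combining the three computations then yields $r_\x(t+p)=r_{\tsh^p(\x)}(t)+m_\x(p)$, as required. I do not expect a genuine obstacle here, as the entire argument is bookkeeping; the only points requiring a little care are to state the translation identities for $i_\x$ and $j_\x$ for all real $t$, and to note that the fraction is never degenerate, since $i_\x(t)\le t<j_\x(t)$ forces $b-a=j_{\tsh^p(\x)}(t)-i_{\tsh^p(\x)}(t)\ge 1$.
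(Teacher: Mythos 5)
Your argument is correct and is essentially the paper's own proof: both rest on the translation identities $i_\x(t+p)=i_{\tsh^p(\x)}(t)+p$, $j_\x(t+p)=j_{\tsh^p(\x)}(t)+p$ followed by an application of \eqref{bobs}, with your version merely spelling out the matching of the slope factor and the coefficient $n(\x_{[i_\x(t+p),\infty)})$ in more detail. No gaps.
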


\begin{proof}
Since $i_\x(t+p)=i_{\tsh^p(\x)}(t)+p$ and $j_\x(t+p)=j_{\tsh^p(\x)}(t)+p$, it follows from \eqref{bobs} that
\begin{equation*}
r_\x(t+p)=r_{\tsh^p(\x)}(t)+m_\x(i_{\tsh^p(\x)}(t)+p)-m_{\tsh^p(\x)}(i_{\tsh^p(\x)}(t))
=r_{\tsh^p(\x)}(t)+m_\x(p).\qedhere
\end{equation*} 
\end{proof}

It is now routine to construct a flow equivalence $\psi:S\X\to S\Y$ from $\bvarphi$ and $r_\x$ (cf. \cite{BCE} and \cite{PS}). 

\begin{claim} \label{super}
There is a flow equivalence $\psi:S\X\to S\Y$ such that 
$$\psi([(\x,t)])=[(\bvarphi(\x),r_\x(t))]$$ 
for $\x\in\X$ and $t\in\RR$.
\end{claim}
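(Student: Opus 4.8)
The plan is to verify that the formula $\psi([(\x,t)])=[(\bvarphi(\x),r_\x(t))]$ descends to a well-defined map on the suspension, that it is a homeomorphism, and that it is orientation-preserving along flow lines. First I would check well-definedness. In $S\X$ one has $[(\tsh^p(\x),t)]=[(\x,t+p)]$, so I must see that the two recipes agree. Claim~\ref{lemma2} gives $\bvarphi(\tsh^p(\x))=\tsh[\Y]^{m_\x(p)}(\bvarphi(\x))$ and Claim~\ref{harrison} gives $r_{\tsh^p(\x)}(t)=r_\x(t+p)-m_\x(p)$; combining these yields $[(\bvarphi(\tsh^p(\x)),r_{\tsh^p(\x)}(t))]=[(\tsh[\Y]^{m_\x(p)}(\bvarphi(\x)),r_\x(t+p)-m_\x(p))]=[(\bvarphi(\x),r_\x(t+p))]=\psi([(\x,t+p)])$. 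Hence $\psi$ descends from the map $F\colon\X\times\RR\to\Y\times\RR$, $F(\x,t)=(\bvarphi(\x),r_\x(t))$.

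Next I would establish continuity of $F$, which gives continuity of $\psi$ since $S\X$ carries the quotient topology. The map $\bvarphi$ is continuous by Claim~\ref{proposition1}, so it remains to treat $(\x,t)\mapsto r_\x(t)$. The key point is that for each fixed $i$ the function $\x\mapsto n(\x_{[i,\infty)})$ is continuous, hence locally constant, because $\X$ is totally disconnected and $n$ is continuous. Thus near a given $(\x_0,t_0)$ the surrounding breakpoints $i_{\x_0}(t_0),j_{\x_0}(t_0)$ and the finitely many values $m_{\x_0}$ and $n((\x_0)_{[i,\infty)})$ entering the defining formula for $r$ are constant in $\x$, so $F$ is locally of the form (locally constant in $\x$) times (piecewise linear in $t$); continuity across breakpoints follows from the continuity of each $r_\x$.

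Then I would prove that $\psi$ is a bijection. Since each $r_\x\colon\RR\to\RR$ is an increasing homeomorphism and $\bvarphi$ carries $\tsh$-orbits into $\tsh[\Y]$-orbits (Claim~\ref{lemma2}), $\psi$ maps each flow line of $S\X$ onto a flow line of $S\Y$. Using Claim~\ref{robert} together with the analogue of Claim~\ref{lemma2} for $\bvarphi'$, I would show that $\bvarphi(\x_1)$ and $\bvarphi(\x_2)$ share a $\tsh[\Y]$-orbit precisely when $\x_1$ and $\x_2$ share a $\tsh$-orbit, and that every $\tsh[\Y]$-orbit is met; this gives a bijection at the level of flow lines. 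For surjectivity, given $[(\y,s)]$ put $\x=\bvarphi'(\y)$, so $\bvarphi(\x)=\tsh[\Y]^{d'}(\y)$ by Claim~\ref{robert}, and take $t=r_\x^{-1}(s-d')$. For injectivity, if $\psi([(\x_1,t_1)])=\psi([(\x_2,t_2)])$ then $\x_2=\tsh^k(\x_1)$ for some $k$, and Claim~\ref{harrison} forces $t_1=t_2+k$, so the two classes coincide; the periodic case is handled identically modulo the relevant least periods via Claim~\ref{periodic}.

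Finally I would upgrade the continuous bijection to a homeomorphism and check the flow condition. The suspension $S\X$ is compact, being the image of the compact set $\X\times[0,1]$ under the quotient map, while $S\Y$ is Hausdorff, so the continuous bijection $\psi$ is automatically a homeomorphism; this avoids constructing the inverse reparametrisations explicitly. For the orientation condition, if $\psi([(\x,t)])=[(\y,r)]$ then $\y=\tsh[\Y]^m(\bvarphi(\x))$ and $r=r_\x(t)-m$ for some $m\in\ZZ$, and for $s>0$ one computes $\psi([(\x,t+s)])=[(\y,r+v)]$ with $v=r_\x(t+s)-r_\x(t)>0$ since $r_\x$ is strictly increasing; the statement for $\psi^{-1}$ follows the same way. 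I expect the main obstacle to be the bijectivity step, specifically the bookkeeping that pins down, through Claims~\ref{lemma2}, \ref{robert} and \ref{harrison}, exactly which orbit and which real parameter a point comes from, and the careful treatment of periodic points; the other verifications are the routine ones the text alludes to.
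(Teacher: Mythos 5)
Your proposal is correct and follows essentially the same route as the paper: well-definedness via Claims~\ref{lemma2} and~\ref{harrison}, continuity from the local constancy of $\x\mapsto n(\x_{[i,\infty)})$, injectivity by reducing to a single orbit through Claims~\ref{lemma2} and~\ref{robert} and then invoking injectivity of $r_\x$ (with Claim~\ref{periodic} for the periodic case), surjectivity via Claim~\ref{robert}, and the compact-to-Hausdorff argument for the inverse. The orientation check via monotonicity of $r_\x$ also matches the paper's.
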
 

\begin{proof}
It follows from Claim~\ref{lemma2} and Claim~\ref{harrison} that
\begin{align*}
	[(\bvarphi(\tsh^p(\x)),r_{\tsh^p(\x)}(t))]
	&=[(\tsh[\Y]^{m_\x(p)}(\bvarphi(\x)),r_{\tsh^p(\x)}(t))]\\
	&=[(\bvarphi(\x),r_{\tsh^p(\x)}(t)+m_\x(p))]\\
	&=[(\bvarphi(\x),r_\x(t+p))].
\end{align*}
It follows that there is a map $\psi:S\X\to S\Y$ such that $\psi([(\x,t)])=[(\bvarphi(\x),r_\x(t))]$ for $\x\in\X$ and $t\in\RR$.

We check that $\psi$ is injective. Suppose $\psi([(\x,t)])=\psi([(\x',t')])$. Then there is a $p\in\ZZ$ such that $\bvarphi(\x)=\tsh[\Y]^p(\bvarphi(\x'))$ and $r_\x(t)+p=r_{\x'}(t')$. It then follows from Claim~\ref{lemma2} and Claim~\ref{robert} that there is a $q\in\ZZ$ such that $\x'=\tsh^q(\x)$. So $[(\x',t')]=[(\x,s)]$ for some $s\in\RR$. If $\x$ is not periodic, then $\bvarphi(\x)$ is not periodic either, so $\psi([(\x,s)])=\psi([(\x,t)])$ implies that $r_\x(s)=r_\x(t)$, and since $r_\x$ is injective, it follows that $s=t$ and thus that $[(\x',t')]=[(\x,s)]=[(\x,t)]$. Suppose that $\x$ is periodic. Then it follows from Claim~\ref{periodic} that $\bvarphi(\x)$ is periodic and that $\lp(\bvarphi(\x))=m_\x(\lp(\x))$. So $\psi([(\x,s)])=\psi([(\x,t)])$ implies that $r_\x(s)=r_\x(t)+i\ m_\x(\lp(\x))$ for some $i\in\ZZ$. It follows from Claim~\ref{harrison} that $r_\x(t)+i\ m_\x(\lp(\x))=r_\x(t+i\lp(\x))$, and since $r_\x$ is injective, it follows that $s=t+i\lp(\x)$ and thus that $[(\x',t')]=[(\x,s)]=[(\x,t+i\lp(\x))]=[(\x,t)]$.

Next, we show that $\psi$ is surjective. Let $[(\y,s)]\in S\Y$. It follows from Claim~\ref{robert} that $[(\y,s)]=[(\bvarphi(\bvarphi'(\y)),r)]$ for some $r\in\RR$. Since $r_{\bvarphi'(\y)}$ is surjective, it follows that there is a $t\in\RR$ such that $\psi([(\bvarphi'(\y),t)])=[(\bvarphi(\bvarphi'(\y)),r)]=[(\y,s)]$.

Let us then show that $\psi$ is continuous. It suffices to show that the map $(\x,t)\mapsto (\bvarphi(\x),r_\x(t))$ is a continuous map from $\X\times\RR$ to $\Y\times\RR$. Let $(\x_i,t_i)$ be a sequence that converges to $(\x,t)$ in $\X\times\RR$. Then $\x_i\to\x$ in $\X$. Since $\bvarphi$ is continuous, it follows that $\bvarphi(\x_i)\to\bvarphi(\x)$. Since the map $n$ is continuous, it follows that there is an $M\in\NN$ such that $i_{\x_i}(s)=i_\x(s)$ and $j_{\x_i}(s)=j_\x(s)$ for $i\ge M$ and $s\in (t-1,t+1)$, and thus that there is an $N\in\NN$ such that $r_{\x_i}(s)=r_\x(s)$ for $i\ge N$ and $s\in (t-1,t+1)$. Since $r_\x$ is continuous, it follows that $r_{\x_i}(t_i)\to r_\x(t)$. Thus, $(\bvarphi(\x_i),r_{\x_i}(t_i))\to (\bvarphi(\x),r_\x(t))$.

We have now shown that $\psi$ is bijective and continuous. Since $S\X$ is compact and $S\Y$ is Hausdorff, it follows that $\psi$ is a homeomorphism. Since $r_\x$ is an increasing homeomorphism from $\RR$ to $\RR$, it follows that $\psi$ maps flow lines onto flow lines in an orientation preserving way. So $\psi$ is a flow equivalence.
\end{proof}

\subsection{Strongly continuous orbit equivalence}
Following \cite{Mat2}, we say that two one-sided shift spaces $X$ and $Y$ are \emph{strongly continuous orbit equivalent} if there is a continuous orbit equivalence $h:X\to Y$, an $h$-cocycle pair $(k,l)$, and a continuous map $b:X\to\ZZ$ such that 
\begin{equation*}
l(x)-k(x)=1+b(x)-b(\osh(x))
\end{equation*}
for all $x\in X$.

Matsumoto proved in \cite[Theorem 5.5]{Mat2} that if two irreducible topological Markov chains $X$ and $Y$ with no isolated points are strongly continuous orbit equivalent, then the corresponding two-sided shift spaces $\X$ and $\Y$ are conjugate. We now generalise this results to arbitrary shift spaces.

\begin{corollary}\label{cor:scoe}
If two shift spaces $X$ and $Y$ are strongly continuous orbit equivalent, then the corresponding two-sided shift spaces $\X$ and $\Y$ are conjugate.
\end{corollary}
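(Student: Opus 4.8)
The plan is to feed the strong orbit equivalence into Proposition~\ref{diego} with the auxiliary map $n$ taken to be constantly $1$, and then to exploit that this choice collapses all the time-reparametrisations $r_\x$ to the identity, so that the resulting flow equivalence is literally a conjugacy. If $h$, the $h$-cocycle pair $(k,l)$ and $b\colon X\to\ZZ$ realise the strong orbit equivalence, then $l(x)-k(x)=1+b(x)-b(\osh(x))$ is precisely the defining relation of Proposition~\ref{diego} for $n\equiv 1$. With $n\equiv 1$, Claim~\ref{yap} becomes $\varphi(\osh(x))=\osh[Y](\varphi(x))$, so $\varphi=\osh[Y]^{b}\circ h$ intertwines the one-sided shifts; and since $m_\x(j)=j$ for all $j$, Claim~\ref{lemma2} gives $\tsh[\Y]\circ\bvarphi=\bvarphi\circ\tsh$, i.e.\ $\bvarphi$ is equivariant for the two-sided shifts.

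First I would assemble and check the hypotheses needed for the construction in the proof of Proposition~\ref{diego}. The cocycle for the reverse direction must be chosen with $n'\equiv 1$ as well, which requires knowing that $h^{-1}$ is again strongly continuous orbit equivalent; cohomologically this is the statement that $[l'-k']=[1]$ in $H^Y$ whenever $[l-k]=[1]$ in $H^X$, and it is here that the symmetry of the strong condition has to be established. Granting this, the equivariance of $\varphi$ (and of its counterpart $\varphi'$) does the rest: since $\varphi$ is continuous and finite-to-one (Claim~\ref{lemma1}) and satisfies $\varphi\circ\osh^{p}=\osh[Y]^{p}\circ\varphi$, one reads off that $x$ is (eventually) periodic exactly when $\varphi(x)$ is and that the periods agree, whence $h$ maps eventually periodic points to eventually periodic points and $\lp(h(x))=\lp(x)$. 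At a periodic point the latter is exactly the least period preserving identity for $n\equiv 1$, and this is the only form in which least period preservation is invoked in the proof of Proposition~\ref{diego} (in Claim~\ref{periodic}; Claim~\ref{lemma_n} is not even needed here, as $m_\x=\mathrm{id}$ makes $m_\x(j)\to\pm\infty$ automatic).

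The construction of the proof of Proposition~\ref{diego} then produces $\psi([(\x,t)])=[(\bvarphi(\x),r_\x(t))]$, and I would next compute $r_\x$. As $n\equiv 1$ is nowhere zero, $i_\x(t)=\lfloor t\rfloor$, $j_\x(t)=\lfloor t\rfloor+1$ and $n(\x_{[i_\x(t),\infty)})=1$, so $r_\x(t)=m_\x(\lfloor t\rfloor)+(t-\lfloor t\rfloor)=t$; thus $r_\x=\mathrm{id}_{\RR}$ for every $\x$, and $\psi([(\x,t)])=[(\bvarphi(\x),t)]$ preserves the $\RR$-coordinate. Since the zero-section $\x\mapsto[(\x,0)]$ of a suspension is injective, the bijectivity of $\psi$ passes to $\bvarphi$: from $\bvarphi(\x)=\bvarphi(\x')$ one gets $\psi([(\x,0)])=\psi([(\x',0)])$, hence $[(\x,0)]=[(\x',0)]$ and $\x=\x'$, while surjectivity of $\psi$ together with equivariance yields surjectivity of $\bvarphi$. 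A continuous equivariant bijection from the compact space $\X$ to the Hausdorff space $\Y$ is a homeomorphism, so $\bvarphi$ is the desired conjugacy.

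The main obstacle I anticipate is the hypothesis-checking of the second paragraph rather than the final step: specifically, proving that $h^{-1}$ is again strongly continuous orbit equivalent so that $n'\equiv 1$ may be used, together with the least-period bookkeeping, both of which ultimately rest on the equivariance of $\varphi$ and on the cohomological identity $[l-k]=[1]$. Once those are in hand, the vanishing of the reparametrisation ($r_\x=\mathrm{id}$) makes the upgrade from flow equivalence to conjugacy almost immediate.
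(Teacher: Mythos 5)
Your proposal follows essentially the same route as the paper: the published proof simply takes $n\equiv 1$ in Proposition~\ref{diego}, notes that then $m_\x(j)=j$ and $r_\x(t)=t$, and concludes that $\bvarphi$ is a conjugacy. The hypothesis-checking you flag (the least-period bookkeeping and the cocycle data for $h^{-1}$) is left implicit in the paper's three-line proof as well, so your more explicit accounting is, if anything, a more careful rendering of the same argument.
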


\begin{proof}
If $X$ and $Y$ are strongly continuous orbit equivalent, then we can choose the function $n:X\to\NN$ in Proposition~\ref{diego} to be constantly equal to $1$. Then $m_\x(j)=j$, $i_\x(j)=j$, $j_\x(j)=j+1$, and $r_\x(j)=j$ for all $\x\in\X$ and all $j\in\ZZ$. Consequently, $\bvarphi:\X\to\Y$ is a conjugacy.
\end{proof}

\section{Orbit equivalence and flow equivalence for shifts of finite type}

In this section we use Proposition~\ref{diego} to prove the following theorem.

\begin{theorem}\label{ib}
Suppose $X$ and $Y$ are one-sided shifts of finite type and that they are continuously orbit equivalent. Then $\X$ and $\Y$ are flow equivalent.
\end{theorem}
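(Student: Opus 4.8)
The plan is to derive the theorem directly from Proposition~\ref{diego} by checking that its hypotheses can always be met when $X$ and $Y$ are shifts of finite type. Fix a continuous orbit equivalence $h:X\to Y$ with an $h$-cocycle pair $(k,l)$ and an $h^{-1}$-cocycle pair $(k',l')$. There are then three things to establish: that $h$ maps eventually periodic points to eventually periodic points (so that the remaining conditions make sense); that the cocycle pairs may be taken least period preserving; and that one can produce continuous $b,b'$ and $n,n'\ge 0$ realising the coboundary decompositions $l-k=n+b-b\circ\osh$ and $l'-k'=n'+b'-b'\circ\osh[Y]$. I would isolate the first two as Proposition~\ref{sven} and Proposition~\ref{john} and treat the third by an elementary graph argument, after which the theorem is immediate.

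For the preservation of eventual periodicity I would use that, for a shift of finite type, $\osh$ (and hence each $\osh^i$) is a local homeomorphism and the relevant maps built from $h$ are finite-to-one. If $x$ is eventually periodic its forward orbit is finite; iterating the cocycle relation \eqref{eq:1} expresses each $h(\osh^i(x))$ as a bounded power of $\osh[Y]$ applied to $h(x)$, so the set $\{h(\osh^i(x)):i\in\NN_0\}$ meets only finitely many $\osh[Y]$-orbit segments of $h(x)$, and finiteness together with the local-homeomorphism structure forces $h(x)$ to be eventually periodic. The converse direction is the same argument applied to $h^{-1}$ and \eqref{eq:2}. The care here is entirely around points whose image is an isolated periodic point, where the cocycle relation only yields a common forward iterate rather than genuine membership in a single $\osh[Y]$-orbit.

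The main work, and the step I expect to be the real obstacle, is producing the non-negative representative $n$ together with the least period identity, in the reducible case and without the Boyle--Handelman theorem used in \cite{MM}. Writing $g:=l-k\in C(X,\ZZ)$, the decomposition $g=n+b-b\circ\osh$ with $n\ge 0$ is exactly the requirement that $g$ admit a non-negative representative in $H^X$, i.e.\ that there be a continuous $b$ with $b\circ\osh-b+g\ge 0$. Since $g$ is locally constant it is a block function, so after recoding $X$ to a one-step (edge) presentation of the same shift of finite type I may assume $g$ depends only on the first symbol; the sought $b$ is then a \emph{feasible potential} on the underlying graph, and the classical no-negative-cycle criterion (Bellman--Ford/Gallai) produces one precisely when $\sum_{i=0}^{\lp(x)-1}g(\osh^i(x))\ge 0$ for every periodic orbit. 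Thus everything reduces to the least period identity $\lp(h(x))=\sum_{i=0}^{\lp(x)-1}g(\osh^i(x))$, which is strictly positive and is the content I would prove as Proposition~\ref{john}: going once around the orbit of a periodic $x$, the net number of $\osh[Y]$-shifts dictated by the cocycle is a period of $h(x)$, hence a multiple of $\lp(h(x))$, and the finite-to-one and local-homeomorphism structure of the shift of finite type forces this multiple to be exactly $1$. This simultaneously yields least period preservation and the strict positivity of the cycle sums needed to run the feasible-potential construction.

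With these in hand the conclusion is immediate: choose least period preserving pairs $(k,l)$ and $(k',l')$ via Proposition~\ref{john}, invoke Proposition~\ref{sven} to know $h$ maps eventually periodic points to eventually periodic points, and apply the feasible-potential construction on both $X$ and $Y$ to obtain $b,n$ and $b',n'$ with $n,n'\ge 0$; Proposition~\ref{diego} then produces a flow equivalence $S\X\to S\Y$. The delicate points throughout are the reducibility (many orbits of very different periods, handled cycle-by-cycle by the potential argument) and the isolated periodic points (handled inside the eventual-periodicity and least-period lemmas), which are precisely where the irreducibility and no-isolated-points hypotheses of \cite{MM} were used.
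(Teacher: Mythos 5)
Your overall architecture matches the paper's: reduce to Proposition~\ref{diego}, and produce the non-negative representative $n$ by a feasible-potential (no-negative-cycle) argument on a higher-block graph once the cycle sums are known to be positive --- this is exactly what Proposition~\ref{erik} does, citing only the elementary \cite[Proposition 3.3(2)]{BH} rather than the deep ordered-cohomology theorem. The genuine gap is at the step you yourself identify as the crux: the least period identity. Your argument gives only that the cocycle sum $c=\sum_{i=0}^{\lp(x)-1}\bigl(l(\osh^i(x))-k(\osh^i(x))\bigr)$ around a periodic orbit is a multiple $m\lp(h(x))$, and the assertion that ``the finite-to-one and local-homeomorphism structure forces $m=1$'' is not an argument and is in fact false for an arbitrary cocycle pair: at an isolated periodic point (e.g.\ a fixed point $x$ with $Z(x_{[0,0]})=\{x\}$) any values $k(x),l(x)$ with $\osh[Y]^{k(x)}(h(x))=\osh[Y]^{l(x)}(h(x))$ are admissible, so $c$ can be $0$, negative, or any multiple of $\lp(h(x))$. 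Even at non-isolated points, where Lemma~\ref{kurt}(2) shows $l-k$ is uniquely determined by $h$, ruling out $|m|\ge 2$ requires knowing that $h$ induces an isomorphism of the isotropy group $\{\eta\in\G_X:r(\eta)=s(\eta)=x\}\cong\ZZ$ onto $\{\eta\in\G_Y:r(\eta)=s(\eta)=h(x)\}\cong\ZZ$, i.e.\ a groupoid isomorphism $\G_X\to\G_Y$ over $h$, and ruling out $m=-1$ is a separate nontrivial contraction argument (Lemma~\ref{henrik}). For isolated periodic points $m=-1$ cannot be ruled out at all (Remark~\ref{vm}); instead the cocycle must be re-chosen there by hand, which is the point of the explicit bookkeeping with base points $x_A$, $y_A$ in the construction of $\phi$ in Proposition~\ref{sven}.

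There is a related, unacknowledged circularity in your first step: your argument that $h$ preserves eventual periodicity breaks down precisely when the accumulated cocycle sums vanish. Iterating \eqref{eq:1} around a periodic orbit yields $\osh[Y]^{K}(h(x))=\osh[Y]^{L}(h(x))$ with $L-K=c$; if $c=0$ this carries no information, and you cannot exclude $c=0$ without the least period identity you have not yet established. The paper avoids this by first building the groupoid isomorphism (Proposition~\ref{sven}) and reading off preservation of eventual periodicity from the cardinality of isotropy groups, and only then extracting least period preserving cocycle pairs and the maps $b,n,b',n'$ (Proposition~\ref{john}, via Proposition~\ref{erik}). So the missing content in your proposal is essentially Lemma~\ref{kurt}(2), Lemma~\ref{henrik} and Proposition~\ref{sven}; the rest is sound and agrees with the paper.
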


If $X$ and $Y$ are irreducible, then the result of Theorem~\ref{ib} easily follows from \cite[Theorem 3.5]{MM} and the fact that every one-sided shift of finite type is conjugate to a one-sided topological Markov shift.

\begin{remark} \label{Olga}
It follows from \cite[Theorem 1.5]{BH} and \cite[Lemma 3.1]{MM} that if $\X$ and $\Y$ are flow equivalent, then there is an isomorphism from $H^X$ to $H^Y$ that maps $H^X_+$ onto $H^Y_+$. Theorem~\ref{ib} can therefore be seen as a generalisation of \cite[Theorem 3.5]{MM} (it will also follow directly from Proposition~\ref{sven} and Proposition~\ref{erik} that if $X$ and $Y$ are continuously orbit equivalent, then there is an isomorphism from $H^X$ to $H^Y$ that maps $H^X_+$ onto $H^Y_+$).
\end{remark}

To prove Theorem~\ref{ib} we will prove that if $X$ and $Y$ are one-sided shifts of finite type and $h:X\to Y$ is a continuous orbit equivalence, then there exist functions $k$, $l$, $k'$, $l'$, $b$, $b'$, $n$, and $n'$ with the property specified in Proposition~\ref{diego}. We do this by closely following \cite{MM} and use the groupoid of a one-sided shift of finite type. However, since we are working with general shifts of finite type and not just irreducible shifts of finite type with no isolated periodic points as in \cite{MM}, we cannot just simply follow the approach of \cite{MM}. In particular, the possibility that our shift spaces contain isolated periodic points implies that we need to make adjustments to the approach used in \cite{MM} (see Proposition~\ref{sven} and Remark~\ref{vm}).

The conditions in Proposition~\ref{diego} are equivalent to the condition that there is an isomorphism $\phi:\G_X\to\G_Y$ such that $r(\phi(\eta))=h(r(\eta))$ and $s(\phi(\eta))=h(s(\eta))$ for $\eta\in\G_X$ and  $\phi((x,\lp(x),x))=(h(x),\lp(h(x)),h(x))$ for every eventually periodic point $x\in X$, and such that $\phi$ induces an isomorphism from $H^Y$ to $H^X$ that maps the class of the constant function 1 into $H^X_+$. We show in Proposition~\ref{sven} that $h$ maps  eventually periodic points to eventually periodic points and that there is an isomorphism $\phi:\G_X\to\G_Y$ such that $r(\phi(\eta))=h(r(\eta))$ and $s(\phi(\eta))=h(s(\eta))$ for $\eta\in\G_X$ and  $\phi((x,\lp(x),x))=(h(x),\lp(h(x)),h(x))$ for every eventually periodic point $x\in X$, and then we generalise \cite[Proposition 3.4]{MM} in Proposition~\ref{erik} and show that there is an isomorphism from $H^1(\G_X)$ to $H^X$ that maps the class of a function $f\in \hom(\G_X,\ZZ)$ into $H^X_+$ if and only if $f((x,\lp(x),x))\ge 0$ for every eventually periodic point $x\in X$. From this we deduce in Proposition~\ref{john} that if $\phi:\G_X\to\G_Y$ is an isomorphism with the above mentioned properties, then there exist functions $k$, $l$, $k'$, $l'$, $b$, $b'$, $n$, and $n'$ with the property specified in Proposition~\ref{diego}. We end the section by putting it all together and give the proof of Theorem~\ref{ib}. 

We begin with two lemmas which we need for the proof of Proposition~\ref{sven}.

\begin{lemma}\label{kurt}
Let $X$ be a one-sided shift of finite type.	
\begin{enumerate}
\item If $x$ is an isolated point in $X$, then $x$ is eventually periodic.
\item If $x$ is not an isolated point in $X$, $U$ is an open neighbourhood of $x$, $W$ is an open subset of $X$, $\alpha:U\to W$ is a homeomorphism, $k,l:U\to\NN_0$ are continuous, and $\osh^{k(x')}(\alpha(x'))=\osh^{l(x')}(x')$ for every $x'\in U$, then there is a unique $n\in\ZZ$ with the property that there exist $k_0,l_0\in\NN_0$ and an open subset $V$ such that $n=l_0-k_0$, $x\in V\subseteq U$ and $\osh^{k_0}(\alpha(x'))=\osh^{l_0}(x')$ for every $x'\in V$.
\end{enumerate}
\end{lemma}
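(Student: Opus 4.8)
The plan is to handle the two parts separately: part~(1) rests on the gluing property defining a shift of finite type, while part~(2) combines the local constancy of $\NN_0$-valued continuous maps with a finiteness argument that is where the non-isolatedness hypothesis is used.

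\textbf{Part (1).} Fix the constant $m\in\NN$ from the definition of a shift of finite type. Since $x$ is isolated, the cylinders $Z(x_{[0,N)})$ form a neighbourhood basis at $x$, so $Z(x_{[0,N)})=\{x\}$ for some $N$. Among the infinitely many length-$m$ windows $x_{[i,i+m)}$ with $i\ge N$ there are two equal ones by pigeonhole, say $x_{[i,i+m)}=x_{[j,j+m)}=w$ with $N\le i<j$; put $p=j-i$ and $u=x_{[i,i+p)}$. I would then consider the candidate point $\tilde x:=x_{[0,i)}u^\infty$, which agrees with $x$ on $[0,i)\supseteq[0,N)$ by construction. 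The first step is to check $\tilde x\in X$: the word $x_{[0,i)}uw=x_{[0,i+p+m)}$ is a subword of $x$ and hence lies in $\LL(X)$, and since $uw$ begins with $w$ (its first $m$ letters are $x_{[i,i+m)}=w$) while $x_{[0,i)}u^kw$ ends with $w$, the gluing property along the length-$m$ overlap $w$ gives $x_{[0,i)}u^{k+1}w\in\LL(X)$ by induction; thus every finite subword of $\tilde x$ lies in $\LL(X)$, so $\tilde x\in X$. Finally $\tilde x\in Z(x_{[0,N)})=\{x\}$ forces $\tilde x=x$, and as $\tilde x$ is periodic from position $i$ onward, $x$ is eventually periodic. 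The one delicate choice is taking the repeated window beyond the isolating cylinder (i.e.\ $i\ge N$), which is precisely what makes the glued periodic sequence coincide with $x$.

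\textbf{Part (2).} Existence is immediate: since $\NN_0$ is discrete, $k$ and $l$ are locally constant, so on the open set $V:=\{x'\in U:k(x')=k(x),\ l(x')=l(x)\}\ni x$ the hypothesis reads $\osh^{k(x)}(\alpha(x'))=\osh^{l(x)}(x')$, and $n:=l(x)-k(x)$ works with $k_0=k(x)$, $l_0=l(x)$. The real content is uniqueness. Suppose $k_1,l_1,V_1$ and $k_2,l_2,V_2$ witness values $n_1=l_1-k_1$ and $n_2=l_2-k_2$. On the open set $V_1\cap V_2\ni x$ both relations $\osh^{k_t}(\alpha(x'))=\osh^{l_t}(x')$, $t=1,2$, hold; assuming $k_1\le k_2$ and applying $\osh^{k_2-k_1}$ to the first eliminates $\alpha(x')$ and yields $\osh^{l_2}(x')=\osh^{l_1+k_2-k_1}(x')$ for every $x'\in V_1\cap V_2$. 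Writing $a=l_2$ and $b=l_1+k_2-k_1$, the desired equality $n_1=n_2$ is exactly $a=b$.

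So the crux, and the main obstacle of the whole lemma, is to rule out $a\ne b$, and this is where non-isolatedness is used. The key observation is that when $a\ne b$ the set $\{x'\in X:\osh^a(x')=\osh^b(x')\}$ is \emph{finite}: any such $x'$ has a tail forced to be periodic, so it is determined by its first $\max(a,b)$ coordinates, giving at most $|\alp|^{\max(a,b)}$ points. If $a\ne b$, then $V_1\cap V_2$ would be contained in this finite set and hence finite; but a finite open set would make $x$ isolated (points are closed in the Hausdorff space $X$, so $V_1\cap V_2\setminus\{x\}$ is closed and $\{x\}=(V_1\cap V_2)\setminus(V_1\cap V_2\setminus\{x\})$ is open), contradicting the hypothesis. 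Hence $a=b$, i.e.\ $n_1=n_2$, which proves uniqueness. Everything else reduces to pigeonhole and gluing in part~(1) or to local constancy for existence in part~(2); the only genuine difficulty is this interplay between the dynamical relation $\osh^a=\osh^b$ holding on a whole neighbourhood and the topological fact that $x$ is not isolated.
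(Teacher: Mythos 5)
Your proof is correct and follows essentially the same route as the paper's: pigeonhole on length-$m$ windows beyond an isolating cylinder plus the gluing property for part (1), and for part (2) the observation that a nontrivial relation $\osh^a=\osh^b$ with $a\ne b$ holding on a whole open neighbourhood would force $x$ to be isolated. The only cosmetic differences are that the paper chooses the repeated window with gap at least $m$ so the periodic point is glued in one step (where you run an induction), and it exhibits a specific singleton cylinder rather than invoking finiteness of $\{x':\osh^a(x')=\osh^b(x')\}$; both variants are sound.
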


\begin{proof}
(1): Suppose $x$ is an isolated point in $X$. Because $X$ is a shift of finite type, there is an $m\in\NN$ such that if $v\in\LL(X)$ has length $m$ and $uv,vw\in\LL(X)$, then $uvw\in\LL(X)$. Choose $n$ such that $Z(x_{[0,n-1]})=\{x\}$. Since there are only finitely many words of length $m$ in $\LL(X)$, it follows that there are $p,q\in\NN$ such that $p\ge n$, $q-p\ge m$ and $x_{[p,p+m-1]}=x_{[q,q+m-1]}$. Since $q-p\ge m$, the infinite sequence $x_{[0,p-1]}x_{[p,q-1]}x_{[p,q-1]}x_{[p,q-1]}\dots$ belongs to $X$ and thus to $Z(x_{[0,n-1]})$, so it must be equal to $x$. This shows that $x$ is eventually periodic.

(2): Let $x,U,W,k,l,\alpha$ be given as specified.  
We first show the existence of an $n\in\ZZ$, $k_0,l_0\in\NN_0$ and an open subset $V$ such that $n=l_0-k_0$, $x\in V\subseteq U$ and $\osh^{k_0}(\alpha(x'))=\osh^{l_0}(x')$ for every $x'\in V$. Let $k_0:=k(x)$, $l_0:=l(x)$ and $n:=l_0-k_0$. Since $k,l:U\to\NN_0$ are continuous, there is an open subset $V$ such that $x\in V\subseteq U$ and $\osh^{k_0}(\alpha(x'))=\osh^{l_0}(x')$ for every $x'\in V$. 

Suppose then that $n'\in\ZZ$, $k_0',l_0'\in\NN_0$ and $V'$ is an open subset such that $n\ne n'=l_0'-k_0'$, $x\in V'\subseteq U$ and $\osh^{k_0'}(\alpha(x'))=\osh^{l_0'}(x')$ for every $x'\in V'$. Let $U':=V\cap V'$, $k_0'':=\max\{k_0,k_0'\}$, $h:=l_0+k_0''-k_0$ and $j:=l_0'+k_0''-k_0'$. Then $U'$ is open, $x\in U'\subseteq U$, $h\ne j$ and $\osh^h(x')=\osh^{k_0''}(\alpha(x'))=\osh^j(x')$ for every $x'\in U'$. Let  $p=\max\{h,j\}$ and $q=\min\{h,j\}$. Then $p>q$ because $h\ne j$. Choose $r\ge p$ such that $Z(x_{[0,r-1]})\subseteq U'$. Then $x'=x_{[0,p-1]}x_{[q,p-1]}x_{[q,p-1]}\dots $ for every $x'\in Z(x_{[0,r-1]})$, but this contradicts the assumption that $x$ is not an isolated point in $X$. 
\end{proof}

\begin{lemma}\label{henrik}
Let $X$ and $Y$ be two one-sided shifts of finite type. Suppose $\phi:\G_X\to\G_Y$ is an isomorphism and $h:X\to Y$ is a homeomorphism such that $\phi((x',0,x'))=(h(x'),0,h(x'))$ for all $x'\in X$. 

If $x\in X$ is eventually periodic, then $h(x)$ is eventually periodic and $\phi((x,\lp(x),x))$ is either equal to $(h(x),\lp(h(x)),h(x))$ or to $(h(x),-\lp(h(x)),h(x))$. If $x$ is not isolated in $X$, then $\phi((x,\lp(x),x))=(h(x),\lp(h(x)),h(x))$.
\end{lemma}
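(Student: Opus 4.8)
The plan is to analyse how $\phi$ acts on the isotropy groups of $\G_X$ and then to pin down a sign using the topology of $\G_X$ near a non-isolated periodic point. (Throughout, write $c$ also for the analogous homomorphism $\G_Y\to\ZZ$.) First I would record the standard fact that a groupoid isomorphism preserves range and source: since $r(\eta)=\eta\eta^{-1}$ and $\phi$ is a homomorphism, $r(\phi(\eta))=\phi(r(\eta))$ and likewise for $s$; combined with $\phi((x',0,x'))=(h(x'),0,h(x'))$ this gives $\phi((x,n,x'))=(h(x),c(\phi((x,n,x'))),h(x'))$ for every morphism. Next I would show that for eventually periodic $x$ the isotropy group $\G_X^x:=\{(x,n,x)\in\G_X\}$ is carried by $c$ isomorphically onto $\lp(x)\ZZ$: the set $\{n:\exists i,j\in\NN_0,\ n=i-j,\ \osh^i(x)=\osh^j(x)\}$ is a subgroup of $\ZZ$ (symmetry is immediate, and closure under addition follows by composing the defining relations, since $\osh^{i_1+i_2}(x)=\osh^{j_1+j_2}(x)$), whose least positive element is exactly $\lp(x)$; thus $\G_X^x\cong\ZZ$ is generated by $(x,\lp(x),x)$. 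Since $\phi$ restricts to a group isomorphism $\G_X^x\to\G_Y^{h(x)}$ (it maps isotropy into isotropy, and so does $\phi^{-1}$), the group $\G_Y^{h(x)}$ is infinite cyclic; as the isotropy group of a point that is \emph{not} eventually periodic is trivial, $h(x)$ must be eventually periodic, and then $\G_Y^{h(x)}\cong\lp(h(x))\ZZ$ is generated by $(h(x),\lp(h(x)),h(x))$. A generator must map to a generator, so $\phi((x,\lp(x),x))=(h(x),\pm\lp(h(x)),h(x))$, which is the first assertion.

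For the second assertion I would first reduce to the case where $x$ is \emph{periodic}. Choosing $N$ with $z:=\osh^N(x)$ periodic of least period $p:=\lp(x)$, the element $\mu:=(x,N,z)$ conjugates $(z,p,z)$ to $(x,p,x)$ in $\G_X$; since conjugation leaves $c$ unchanged, $c(\phi((x,p,x)))=c(\phi((z,p,z)))$, and comparing with the first assertion applied to $x$ and to $z$ forces $\lp(h(x))=\lp(h(z))$ and equal signs. Because $X$ is of finite type, $\osh^N$ is a local homeomorphism and hence maps non-isolated points to non-isolated points, so $z$ is non-isolated. Thus it suffices to treat a non-isolated \emph{periodic} point, which I rename $x$, with $\osh^p(x)=x$, and to rule out the value $c(\phi((x,p,x)))=-p'$, where $p':=\lp(h(x))$.

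This last step is the heart of the matter. Taking a cylinder $U=Z(x_{[0,R-1]})$ with $R\ge p$ on which $\osh^p$ is injective, $B:=\{(w,p,\osh^p(w)):w\in U\}$ is a basic open bisection through $(x,p,x)$, so $\phi(B)$ is an open bisection through $(h(x),-p',h(x))$ on which $c\circ\phi$, being continuous and hence locally constant, equals $-p'$. Choosing inside $\phi(B)$ a basic open bisection realised by $i=0$, $j=p'$ and pulling it back, I obtain a neighbourhood $U''$ of $x$ on which $\osh[Y]^{p'}(h(\osh^p(w)))=h(w)$. Equivalently, $g:=h\circ\osh^p\circ h^{-1}$ is a continuous \emph{section} of $\osh[Y]^{p'}$ near $h(x)$; any such section locally prepends a fixed word of length $p'$ and is therefore a contraction in the shift metric fixing $h(x)$. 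Hence on a small enough neighbourhood $g$ maps into itself with $g^k\to h(x)$, and transporting this through $h$ shows that every $w$ in some neighbourhood $\tilde U$ of $x$ satisfies $\osh^{kp}(w)\in U''$ for all $k\in\NN_0$. But for a cylinder around a periodic point of period $p$ this forces $w=x$; so $\tilde U=\{x\}$ and $x$ is isolated, contradicting our hypothesis. Therefore the sign is $+$, giving $\phi((x,\lp(x),x))=(h(x),\lp(h(x)),h(x))$.

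The step I expect to be the main obstacle is precisely this sign determination. Since $h$ is merely a homeomorphism and need not respect the shift metric, metric notions such as expansion are not transported, and one cannot argue by attracting/repelling directly on the units. The device that rescues the argument is to convert the forbidden sign into a genuine metric contraction $g$ (a section of the shift), whose attracting fixed point is a topological and hence $h$-invariant feature, and then to exploit the finite-type structure — non-isolated periodic points of a shift of finite type are never attracting in this sense — to produce the contradiction. Extracting the clean local identity $\osh[Y]^{p'}(h(\osh^p(w)))=h(w)$ from the bisection $\phi(B)$, by selecting the realisation with $i=0$, is the key technical manoeuvre that makes the contraction visible.
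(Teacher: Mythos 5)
Your first assertion and your reduction to periodic points are both correct: the isotropy-group argument pins the value down to $\pm\lp(h(x))$, conjugation by $(x,N,\osh^N(x))$ leaves the cocycle $c\circ\phi$ unchanged, and $\osh^N$, being a local homeomorphism on a shift of finite type, does carry non-isolated points to non-isolated points. Your overall strategy for determining the sign --- showing that the value $-\lp(h(x))$ would make $g=h\circ\osh^p\circ h^{-1}$ a contraction near $h(x)$ and hence force $x$ to be isolated --- is essentially the paper's argument as well, merely run on the $Y$-side with the inverse of the bisection map instead of on the $X$-side by iterating $\alpha_A$.

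However, the step you yourself single out as the key manoeuvre fails as stated. A basic open bisection realised by $i=0$, $j=p'$ can only contain $(h(x),-p',h(x))$ if $\osh[Y]^{0}(h(x))=\osh[Y]^{p'}(h(x))$, i.e.\ if $h(x)$ is periodic; indeed your local identity $\osh[Y]^{p'}(h(\osh^p(w)))=h(w)$ evaluated at $w=x$ asserts exactly that. But periodicity of $x$ does not give periodicity of $h(x)$: the hypotheses only force $h(x)$ to be eventually periodic. For instance, on the full $2$-shift the map flipping the zeroth symbol is a continuous orbit equivalence (take $k\equiv 1$, $l\equiv 2$) sending $aaa\cdots$ to $baa\cdots$, and it lifts to a groupoid isomorphism fixing the units as in the lemma. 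So in general you can only realise a basic bisection inside $\phi(B)$ with indices $i=s'$, $j=s'+p'$, where $s'\ge 0$ is the pre-period of $h(x)$. The argument survives this correction: the identity $\osh[Y]^{s'}(h(w))=\osh[Y]^{s'+p'}(h(\osh^p(w)))$ together with the cylinder constraints on source and range shows that $g$ locally replaces the length-$s'$ prefix $h(x)_{[0,s'-1]}$ by the length-$(s'+p')$ prefix $h(x)_{[0,s'+p'-1]}$, so it is still a contraction fixing $h(x)$, and the rest of your contradiction goes through verbatim. This is precisely the point at which the paper's proof is careful to introduce a positive exponent $j$ with $\osh[Y]^j(h(x))=\osh[Y]^{j+\lp(h(x))}(h(x))$ before choosing the small bisection inside $\phi(A)$.
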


\begin{proof}
The proof uses ideas from \cite[Lemma 3.3]{MM}. Suppose $x\in X$ is eventually periodic. Since $\phi$ is an isomorphism and $\phi((x',0,x'))=(h(x'),0,h(x'))$ for all $x'\in X$, it follows that $\phi((x,\lp(x),x))=(h(x),n,h(x))$ for some $n\in\ZZ$ different from 0. It follows that $h(x)$ is eventually periodic.
	
Since $\phi$ is an isomorphism, it follows that either $n=\lp(h(x))$ or $n=-\lp(h(x))$. Suppose $n=-\lp(h(x))$. We will show that $x$ is then isolated in $X$.
	
Choose $m\in\NN$ such that if $v\in\LL(X)$ has length $m$ and $uv,vw\in\LL(X)$, then $uvw\in\LL(X)$, and choose $r,s\in\NN_0$ such that $r-s=\lp(x)$ and $\osh^r(x)=\osh^s(x)$. Then 
\begin{equation*}
	A:=\{(x'',\lp(x),x'):x''\in Z(x_{[0,r+m-1]}),\ x'\in Z(x_{[0,s+m-1]}),\ \osh^r(x'')=\osh^s(x')\}
\end{equation*}
is an open bisection containing $(x,\lp(x),x)$. It follows that $s(A)=Z(x_{[0,s+m-1]})$ and $r(A)=Z(x_{[0,r+m-1]})$ and the map $\alpha_A:s(A)\to r(A)$ defined by $\alpha_A(s(\xi))=r(\xi)$ for $\xi\in A$ is a homeomorphism (cf.~\cite[Proposition 3.3]{BCW}) such that
\begin{equation}\label{eq:3}
\alpha_A(x')=x_{[0,r+m-1]}\osh^{s+m}(x')	
\end{equation}
for $x'\in s(A)$. Notice that $r(A)\subseteq s(A)$. It follows from \eqref{eq:3} that $\lim_{i\to\infty}\alpha_A^i(x')= x$ for all $x'\in s(A)$.

Choose $m'\in\NN$ such that if $v\in\LL(Y)$ has length $m'$ and $uv,vw\in\LL(Y)$, then $uvw\in\LL(y)$. Since $\phi((x,\lp(x),x))=(h(x),-\lp(h(x)),h(x))$, there is an $j\in\NN$ such that $\osh[Y]^j(h(x))=\osh[Y]^{j+\lp(h(x))}(h(x))$, and such that the open bisection
\begin{multline*}
\{(y'',-\lp(h(x)),y'):y''\in Z(h(x)_{[0,j+m'-1]}),\\ y'\in Z(h(x)_{[0,j+\lp(h(x))+m'-1]}),\ \osh[Y]^j(y'')=\osh[Y]^{j+\lp(h(x))}(y')\}
\end{multline*} 
is contained in $\phi(A)$. 

Let $y\in h(s(A))$. Then $\lim_{i\to\infty}\alpha_A^i(h^{-1}(y))= x$. It follows that there is an $I\in\NN$ such that $h(\alpha_A^i(h^{-1}(y)))\in Z(h(x)_{[0,j+\lp(h(x))+m'-1]})$ for $i\ge I$. Let $y':=h(\alpha_A^I(h^{-1}(y)))$ and $y'':=h(x)_{[0,j-1]}\osh[Y]^{j+\lp(h(x))}(y')$. Then $(y'',-\lp(h(x)),y')\in\phi(A)$. It follows that $y''=h(\alpha_A(h^{-1}(y')))\in Z(h(x)_{[0,j+\lp(h(x))+m'-1]})$, and thus that $y'\in Z(h(x)_{[0,j+2\lp(h(x))+m'-1]})$. By repeating this argument, we see that $y'\in Z(h(x)_{[0,j+i\lp(h(x))+m'-1]})$ for all $i\in\NN$. It follows that $y'=h(x)$ and thus that $y=h(x)$. This shows that $h(x)$ is isolated in $Y$. Since $h$ is a homeomorphism, it follows that $x$ is isolated in $X$.
\end{proof}

\begin{proposition}\label{sven}
Let $X$ and $Y$ be two one-sided shifts of finite type and let $h:X\to Y$ be a continuous orbit equivalence. Then $h$ maps eventually periodic points to eventually periodic points, and there is an isomorphism $\phi:\G_X\to \G_Y$ such that $r(\phi(\eta))=h(r(\eta))$ and $s(\phi(\eta))=h(s(\eta))$ for $\eta\in\G_X$ and such that $\phi((x,\lp(x),x))=(h(x),\lp(h(x)),h(x))$ for every eventually periodic point $x\in X$.
\end{proposition}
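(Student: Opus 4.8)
The plan is to build the groupoid isomorphism $\phi$ directly out of the orbit-equivalence data and then read off the periodic-point behaviour from Lemma~\ref{henrik}. Fix an $h$-cocycle pair $(k,l)$ and an $h^{-1}$-cocycle pair $(k',l')$. By \eqref{eq:1} the triple $(h(x),l(x)-k(x),h(\osh(x)))$ lies in $\G_Y$, so I would define $\phi$ on the generators by $\phi((x,1,\osh(x))):=(h(x),l(x)-k(x),h(\osh(x)))$ and extend multiplicatively: writing an arbitrary element in canonical form $(x,i-j,x')=(x,i,\osh^i(x))\,(x',j,\osh^j(x'))^{-1}$ with $\osh^i(x)=\osh^j(x')$, one lets $\phi((x,i-j,x'))$ be the corresponding product of generator-images, with the accumulated second coordinate $\sum_{p=0}^{i-1}(l-k)(\osh^p(x))-\sum_{p=0}^{j-1}(l-k)(\osh^p(x'))$. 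The cocycle identity \eqref{eq:1} makes this independent of the representation, and since $k,l$ are locally constant and $h$ is continuous, $\phi$ is a continuous homomorphism with $\phi((x',0,x'))=(h(x'),0,h(x'))$. Symmetrically, $(k',l')$ and \eqref{eq:2} produce a continuous homomorphism $\psi:\G_Y\to\G_X$.

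To show $\phi$ is an isomorphism I would prove $\psi\circ\phi=\mathrm{id}$ and $\phi\circ\psi=\mathrm{id}$. The composite $\psi\circ\phi$ fixes the unit space and preserves range and source, so it has the form $(x,n,x')\mapsto(x,\gamma(x,n,x'),x')$ for a continuous cocycle $\gamma$, and the task is to check $\gamma(x,n,x')=n$. When the range $x$ is \emph{not} isolated this is forced by Lemma~\ref{kurt}(2): realising $(x,n,x')$ inside a basic bisection whose associated local homeomorphism $\alpha$ satisfies a relation $\osh^{k_0}(\alpha(\cdot))=\osh^{l_0}(\cdot)$ near $x$, the image bisection $\psi\circ\phi(A)$ has the \emph{same} range map $\alpha$, so both $n$ and $\gamma(x,n,x')$ are the unique integer attached to $\alpha$ in Lemma~\ref{kurt}(2), whence they agree.

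The main obstacle is the behaviour on isolated points, which is precisely the phenomenon that \cite{MM} sidesteps by excluding isolated periodic points: on the isolated part the relation \eqref{eq:1} is essentially vacuous, so a carelessly chosen $(k,l)$ can send the isotropy generator $(x,\lp(x),x)$ to a wrong multiple, or the wrong sign, of $(h(x),\lp(h(x)),h(x))$. Here I would exploit the freedom in the cocycle. By Lemma~\ref{kurt}(1) every isolated point is eventually periodic, its image under the homeomorphism $h$ is again isolated and eventually periodic, and a short orbit-counting argument yields $\lp(h(x))=\lp(x)$. Since each isolated point is a clopen singleton, $k$ and $l$ may be redefined there freely—keeping them non-negative and \eqref{eq:1} intact—so as to arrange $\sum_{i=0}^{\lp(x)-1}\bigl(l(\osh^i(x))-k(\osh^i(x))\bigr)=\lp(h(x))$ for each isolated periodic point $x$; the residue of this sum modulo $\lp(h(x))$ is automatically $0$, so such a choice exists, and it forces $\phi$ to carry each isolated isotropy generator to the corresponding generator of $\G_Y$ with the correct ($+$) sign. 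After the same adjustment of $(k',l')$, the composite $\psi\circ\phi$ fixes units and isotropy generators over the (discrete) isolated periodic orbits and is therefore the identity there too; combined with the previous paragraph this gives $\psi\circ\phi=\mathrm{id}$, and symmetrically $\phi\circ\psi=\mathrm{id}$, so $\phi$ is an isomorphism with $r(\phi(\eta))=h(r(\eta))$ and $s(\phi(\eta))=h(s(\eta))$.

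Finally I would invoke Lemma~\ref{henrik}. Because $\phi$ fixes the unit space via $h$, the lemma shows immediately that $x$ eventually periodic implies $h(x)$ eventually periodic, and that $\phi((x,\lp(x),x))$ equals $(h(x),\lp(h(x)),h(x))$ when $x$ is not isolated and is $(h(x),\pm\lp(h(x)),h(x))$ in general; the cocycle adjustment of the previous step pins the remaining isolated cases to the $+$ sign. Applying the same reasoning to $\psi$ and $h^{-1}$ gives the converse implication, so $h$ maps eventually periodic points to eventually periodic points, which completes the proof.
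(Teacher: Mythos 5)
Your construction of $\phi$ by the cocycle sums, and the use of Lemma~\ref{kurt}(2) to force $\psi\circ\phi$ to be the identity on elements whose source is not isolated, are sound and close in spirit to what the paper does. The trouble is concentrated, as you anticipated, on the isolated part, and there your argument has a genuine gap. First, a minor point: the claim that $\lp(h(x))=\lp(x)$ for isolated points is unjustified (on a discrete orbit the relations \eqref{eq:1} and \eqref{eq:2} only force $h$ to map the $\G_X$-orbit of $x$ onto the $\G_Y$-orbit of $h(x)$, and nothing pins down the length of the periodic cycle inside that orbit); fortunately you never use it, since your adjustment only needs that $\sum_{i=0}^{\lp(x)-1}\bigl(l(\osh^i(x))-k(\osh^i(x))\bigr)$ is divisible by $\lp(h(x))$ and can be shifted by arbitrary multiples of $\lp(h(x))$, which is correct. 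The serious problem is the inference that because $\psi\circ\phi$ fixes the unit space and the isotropy generators over an isolated orbit $[x]$, it is the identity on $\G_X|_{[x]}$. This is false: the map $(z,n,z')\mapsto(z,n+b(z)-b(z'),z')$ for any function $b:[x]\to\lp(x)\ZZ$ is a homomorphism fixing units and all isotropy, and your cocycle adjustment controls only the isotropy sums, not these coset offsets. Hence $\psi$ need not equal $\phi^{-1}$ on elements with isolated source.

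Bijectivity of $\phi$ can be salvaged (once the isotropy generator over $[x]$ goes to the isotropy generator over $[h(x)]$, each coset $\{(z,n_0+m\lp(x),z'):m\in\ZZ\}$ maps bijectively onto the corresponding coset), but continuity of $\phi^{-1}$ does not then come for free, and this is exactly where the paper's proof spends its effort. Elements with isolated source are isolated in $\G_X$, yet they accumulate at elements with non-isolated source; $\phi^{-1}$ agrees with the continuous map $\psi$ only off the isolated part, so one must show the discrepancy vanishes in a neighbourhood of every non-isolated element. Trying to kill the discrepancy by further redefining $(k,l)$ at the non-periodic isolated points is not free either, since there are infinitely many of them accumulating at non-isolated points, which threatens the continuity of $k$ and $l$ themselves. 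The paper avoids all of this by defining $\phi$ on the isolated part through an explicit formula (a chosen periodic reference point $x_A$ in each isolated orbit together with the distance functions $j_x$), which makes bijectivity and the correct image of the isotropy generators automatic, and then verifies continuity of $\phi$ and $\phi^{-1}$ by a careful cylinder-set argument showing that elements of a basic neighbourhood of a non-isolated $\eta$ whose sources happen to be isolated are still sent into the prescribed neighbourhood of $\phi(\eta)$. That verification is the missing ingredient in your proposal.
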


\begin{proof}
We begin by constructing the isomorphism $\phi:\G_X\to \G_Y$. We first define what $\phi(\eta)$ is when $s(\eta)$ is an isolated point in $X$, and then what $\phi(\eta)$ is when $s(\eta)$ is not an isolated point in $X$.
	 
For $x\in X$, let $[x]:=\{x'\in X:\exists\eta\in\G_X\text{ such that }r(\eta)=x\text{ and }s(\eta)=x'\}$. Notice that if $x'\in [x]$, then $x$ is isolated in $X$ if and only if $x'$ is. It follows from Lemma~\ref{kurt}(1) that if $x$ is an isolated point in $X$, then $[x]$ contains a periodic point. Choose for each $A\in\{[x]:x\text{ is an isolated point in } X\}$, a periodic point $x_A\in A$. For $x\in A$, let $j_x:=\min\{j\in\NN_0:\osh^j(x)=x_A\}$. If the source of $\eta\in\G_X$ is an isolated point, then $r(\eta)$ is also an isolated point, $[r(\eta)]=[s(\eta)]$, and $j_{r(\eta)}-j_{s(\eta)}-c(\eta)=n\lp(x_{[r(\eta)]})$ for some $n\in\ZZ$. We write $n_\eta$ for this $n$.

We similarly let $[y]:=\{y'\in y:\exists\eta\in\G_Y\text{ such that }r(\eta)=y\text{ and }s(\eta)=y'\}$ for $y\in Y$, choose for each $A\in\{[y]:y\text{ is an isolated point in } Y\}$ a periodic point $y_A\in A$, let $j_y:=\min\{j\in\NN_0:\osh[Y]^j(y)=y_A\}$ for $y\in A$, and let $n_\eta$ be the unique integer such that $j_{r(\eta)}-j_{s(\eta)}-c(\eta)=n_\eta\lp(y_{[r(\eta)]})$ for $\eta\in\G_Y$ with $s(\eta)$ an isolated point in $Y$. 

Let $\eta\in\G_X$ and suppose $s(\eta)$ is an isolated point in $X$. Then $r(\eta)$ is also an isolated point in $X$, $h(r(\eta))$ and $h(s(\eta))$ are isolated points in $Y$, and 
$$(h(r(\eta)),j_{h(r(\eta))}-j_{h(s(\eta))}-n_\eta\lp(y_{[h(r(\eta))]}),h(s(\eta)))\in\G_Y.$$ 
We let   
$$\phi(\eta):=(h(r(\eta)),j_{h(r(\eta))}-j_{h(s(\eta))}-n_\eta\lp(y_{[h(r(\eta))]}),h(s(\eta))).$$

Suppose then that $s(\eta)$ is not an isolated point in $X$. Choose an open bisection $A$ such that $\eta\in A$. Then $s(A)$ and $r(A)$ are open in $X$ and the map $\alpha_A:s(A)\to r(A)$ defined by $\alpha_A(s(\xi))=r(\xi)$ for $\xi\in A$ is a homeomorphism with the property that there are continuous maps $k,l:s(A)\to\NN_0$ such that $\osh^{k(x)}(\alpha_A(x))=\osh^{l(x)}(x)$ for every $x\in s(A)$ (cf.~\cite[Proposition 3.3]{BCW}). Since $h:X\to Y$ is a continuous orbit equivalence, it follows that there are a homeomorphism $\alpha'_A:h(s(A))\to h(r(A))$ such that $\alpha'_A(h(x))=h(\alpha(x))$ for $x\in s(A)$, and continuous maps $k',l':h(s(A))\to\NN_0$ such that $\osh[Y]^{k'(y)}(\alpha'_A(y))=\osh[Y]^{l'(y)}(y)$ for every $y\in h(s(A))$ (cf.~the proof of \cite[Proposition 3.4]{BCW}). Since $h(s(\eta))$ is not an isolated point in $Y$, it follows from Lemma~\ref{kurt}(2) that there is a unique $n\in\ZZ$ such that $\osh[Y]^{k_0}(\alpha'_A(y))=\osh[Y]^{l_0}(y)$ for all $y$ in some open neighbourhood $V\subseteq h(s(A))$ of $h(s(\eta))$ and some $k_0,l_0\in\NN_0$ satisfying $l_0-k_0=n$. Then $(h(r(\eta)),n,h(s(\eta)))\in\G_Y$. Notice that $n$ does not depend on the particular choice of $A$ because if $B$ is another open bisection containing $\eta$, then $A\cap B$ is also an open bisection containing $\eta$ and $\alpha_A(x)=\alpha_{A\cap B}(x)=\alpha_B(x)$ for every $x\in s(A\cap B)$, so if $n'$ and $n''$ are integers such that $\osh[Y]^{k'_0}(\alpha'_B(y))=\osh[Y]^{l'_0}(y)$ for all $y$ in some open neighbourhood $V'\subseteq h(s(B))$ of $h(s(\eta))$ and some $k'_0,l'_0\in\NN_0$ satisfying $l'_0-k'_0=n'$, and $\osh[Y]^{k''_0}(\alpha'_B(y))=\osh[Y]^{l''_0}(y)$ for all $y$ in some open neighbourhood $V''\subseteq h(s(A\cap B))$ of $h(s(\eta))$ and some $k''_0,l''_0\in\NN_0$ satisfying $l''_0-k''_0=n''$, then it follows from the uniqueness of $n$, $n'$ and $n''$ that $n=n''=n'$. We let $\phi(\eta):=(h(r(\eta)),n,h(s(\eta)))$.

We have now defined a map $\phi:\G_X\to \G_Y$ such that $r(\phi(\eta))=h(r(\eta))$ and $s(\phi(\eta))=h(s(\eta))$ for $\eta\in\G_X$ and such that $\phi((x,\lp(x),x))=(h(x),\lp(h(x)),h(x))$ for every isolated point in $X$. It is straightforward to check that $\phi$ is a bijection, that $\phi(\eta)^{-1}=\phi(\eta^{-1})$ for every $\eta\in\G_X$, and that $\phi(\eta_1\eta_2)=\phi(\eta_1)\phi(\eta_2)$ for $\eta_1,\eta_2\in\G_X$ with $s(\eta_1)=r(\eta_2)$. 

Since $x\in X$ is eventually periodic if and only if $\{\eta\in\G_X: s(\eta)=r(\eta)=x\}$ is infinite, and $h(x)$ is eventually periodic if and only if $\{\eta\in\G_Y: s(\eta)=r(\eta)=h(x)\}$ is infinite, it follows that $h$ maps eventually periodic points to eventually periodic points.

We will now show that $\phi$ is continuous. We will do that by, for $\eta\in\G_X$ and an open subset neighbourhood $V$ of $\phi(\eta)$, constructing an open neighbourhood $U$ of $\eta$ such that $\phi(U)\subseteq V$. If $s(\eta)$ is an isolated point in $X$, then $\eta$ is an isolated point in $\G_X$, so we can just take $U$ to be equal to $\{\eta\}$ in that case. Suppose that $s(\eta)$ is not an isolated point in $X$. Choose $m'\in\NN$ such that if $v\in\LL(Y)$ has length $m'$ and $uv,vw\in\LL(Y)$, then $uvw\in\LL(Y)$, and choose $k',l'\in\NN$ such that $\osh[Y]^{k'}(r(\phi(\eta)))=\osh[Y]^{l'}(s(\phi(\eta)))$ and
\begin{equation*}
\phi(\eta)\in\{(r(\phi(\eta))_{[0,k'-1]}y,k'-l',s(\phi(\eta))_{[0,l'-1]}y):y\in Z(r(\phi(\eta))_{[k',k'+m'-1]})\}\subseteq V.
\end{equation*}
Then choose $m\in\NN$ such that if $v\in\LL(X)$ has length $m$ and $uv,vw\in\LL(X)$, then $uvw\in\LL(X)$, and choose $k,l\in\NN$ such that $\osh^k(r(\eta))=\osh^l(s(\eta))$, $k-l=c(\eta)$, $h(Z(r(\eta)_{[0,k-1]}))\subseteq Z(r(\phi(\eta))_{[0,k'+m'-1]})$, and $h(Z(s(\eta)_{[0,l-1]}))\subseteq Z(s(\phi(\eta))_{[0,l'+m'-1]})$. Then 
\begin{equation*}
A:=\{(r(\eta)_{[0,k-1]}x,k-l,s(\eta)_{[0,l-1]}x):x\in Z(r(\eta)_{[k,k+m-1]})\}
\end{equation*}
is a bisection that contains $\eta$. Choose an open neighbourhood $V'\subseteq h(s(A))$ of $h(s(\eta))$ and $n'\in\NN_0$ such that $\osh[Y]^{k'+n'}(h(\alpha_A(h^{-1}(y)))=\osh[Y]^{l'+n'}(y)$ for all $y\in V'$, and $n\in\NN_0$ such that $h(Z(s(\eta)_{[0,l+n-1]}))\subseteq V'$, and let 
\begin{equation*}
U:=\{(r(\eta)_{[0,k+n-1]}x,k-l,s(\eta)_{[0,l+n-1]}x):x\in Z(r(\eta)_{[k+n,k+n+m-1]})\}.
\end{equation*}
Then $U$ is an open neighbourhood of $\eta$ such that 
$$\phi(\xi)\in \{(r(\phi(\eta))_{[0,k'-1]}y,k'-l',s(\phi(\eta))_{[0,l'-1]}y):y\in Z(r(\phi(\eta))_{[k',k'+m'-1]})\}\subseteq V$$ 
if $\xi\in U$ and $s(\xi)$ is not an isolated point in $X$. We claim that $\phi(\xi)\in V$ if $\xi\in U$, also if $s(\xi)$ is an isolated point in $X$. Suppose $\xi\in U$ and that $s(\xi)$ is an isolated point in $X$. If $x$ is an isolated point in $X$ and periodic, then $Z(x_{[0,m-1]})=\{x\}$. It follows that $\osh^i(s(\xi))\ne x_{[s(\xi)]}$ for $i\in\{0,1,\dots,l+n-1\}$ and that $\osh^j(r(\xi))\ne x_{[s(\xi)]}$ for $j\in\{0,1,\dots,k+n-1\}$ because if $\osh^i(s(\xi))= x_{[s(\xi)]}$ and $i\in\{0,1,\dots,l+n-1\}$, then $\osh^i(s(\eta))\in Z((x_{[s(\xi)]})_{[0,m-1]})$, and if $\osh^j(r(\xi))= x_{[s(\xi)]}$ and $j\in\{0,1,\dots,k+n-1\}$, then $\osh^j(r(\eta))\in Z((x_{[s(\xi)]})_{[0,m-1]})$. Thus, $j_{r(\xi)}-j_{s(\xi)}=k-l$ and $n_\xi=0$. Similarly, $j_{h(r(\xi))}-j_{h(s(\xi))}=k'-l'$, so 
\begin{multline*}
	\phi(\xi)=(h(r(\xi)),k'-l',h(s(\xi)))\\\in \{(r(\phi(\eta))_{[0,k'-1]}y,k'-l',s(\phi(\eta))_{[0,l'-1]}y):y\in Z(r(\phi(\eta))_{[k',k'+m'-1]})\}\subseteq V.
\end{multline*} 
This shows that $\phi$ is continuous. That $\phi^{-1}$ is continuous can be proved in a similar way.

Thus, $\phi$ is an isomorphism such that $r(\phi(\eta))=h(r(\eta))$ and $s(\phi(\eta))=h(s(\eta))$ for $\eta\in\G_X$ and such that $\phi((x,\lp(x),x))=(h(x),\lp(h(x)),h(x))$ for every isolated point in $X$. Finally, it follows from Lemma~\ref{henrik} that $\phi((x,\lp(x),x))=(h(x),\lp(h(x)),h(x))$ for every eventually periodic point that is not an isolated point in $X$.
\end{proof}

\begin{remark}\label{vm}
Let $X$ and $Y$ be two one-sided shifts of finite type and let $\phi:\G_X\to\G_Y$ be an isomorphism. Then the map $h:X\to Y$ given by $\phi((x,0,x))=(h(x),0,h(x))$ is a continuous orbit equivalence (see Proposition~\ref{john}), and it follows from Lemma~\ref{henrik} that $\phi((x,\lp(x),x))=(h(x),\lp(h(x)),h(x))$ for every eventually periodic point $x\in X$ that is not isolated in $X$, but it might be the case that $\phi((x,\lp(x),x))=(h(x),-\lp(h(x)),h(x))$ if $x$ is isolated in $X$. 
\end{remark}

We have shown in Proposition~\ref{sven} that if two one-sided shifts of finite type $X$ and $Y$ are continuously orbit equivalent by an orbit equivalence $h:X\to Y$, then there is an isomorphism $\phi:\G_X\to \G_Y$ such that $r(\phi(\eta))=h(r(\eta))$ and $s(\phi(\eta))=h(s(\eta))$ for $\eta\in\G_X$ and such that $\phi((x,\lp(x),x))=(h(x),\lp(h(x)),h(x))$ for every eventually periodic point $x\in X$. From this, it is not difficult to construct the least period preserving cocycle pairs for $h$ and $h^{-1}$ we need to apply Proposition~\ref{diego} (see the proof of Proposition~\ref{john}). To construct the continuous maps $b:X\to\ZZ$, $n:X\to\NN_0$, $b':Y\to\ZZ$ and $n':Y\to\NN_0$ required to apply Proposition~\ref{diego}, we need the following proposition which is a generalisation of \cite[Proposition 3.4]{MM} to general shifts of finite type. The proof of Proposition~\ref{erik} is essentially identical to the proof of \cite[Proposition 3.4]{MM}, but we have included it for completeness.
 
\begin{proposition}\label{erik}
Let $X$ be a shift of finite type. Then there is an isomorphism $\Phi:H^1(\G_X)\to H^X$ such that $\Phi([f])=[g]$ where $g\in C(X,\ZZ)$ is given by $g(x)=f((x,1,\osh(x)))$. Moreover, $\Phi([f])\in H^X_+$ if and only if $f((x,\lp(x),x))\ge 0$ for every eventually periodic point $x\in X$. 
\end{proposition}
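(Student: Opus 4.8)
The plan is to produce a group isomorphism already at the level of cocycles, and then to treat the positivity statement separately, since that is where the real work lies.

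For the isomorphism I would define $\tilde\Phi\colon\hom(\G_X,\ZZ)\to C(X,\ZZ)$ by $\tilde\Phi(f)(x)=f((x,1,\osh(x)))$, which is a well-defined group homomorphism because $x\mapsto(x,1,\osh(x))$ is a continuous section into $\G_X$. The first observation is that a cocycle is completely determined by these values: decomposing $(x,i,\osh^i(x))=\prod_{l=0}^{i-1}(\osh^l(x),1,\osh^{l+1}(x))$ and writing a general morphism as $(x,i-j,x')=(x,i,z)(x',j,z)^{-1}$ with $z=\osh^i(x)=\osh^j(x')$, the homomorphism property forces
\begin{equation*}
f((x,i-j,x'))=\sum_{l=0}^{i-1}g(\osh^l(x))-\sum_{l=0}^{j-1}g(\osh^l(x')),\qquad g:=\tilde\Phi(f).
\end{equation*}
This shows $\tilde\Phi$ is injective; conversely, for an arbitrary $g\in C(X,\ZZ)$ the right-hand side above \emph{defines} a cocycle, the only points needing checking being independence of the chosen pair $(i,j)$ and the identity $f(\eta_1\eta_2)=f(\eta_1)+f(\eta_2)$, both of which are routine telescoping computations in which the contributions over the common tail $z$ cancel. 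Hence $\tilde\Phi$ is an isomorphism, and since $\tilde\Phi(\partial(F))=F-F\circ\osh$ it carries the coboundaries defining $H^1(\G_X)$ exactly onto those defining $H^X$; passing to quotients yields the asserted $\Phi$ with $\Phi([f])=[g]$.

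For the positivity statement I would first compute, for an eventually periodic $x$ whose tail $\osh^m(x)$ is periodic of least period $p=\lp(x)$, that $f((x,\lp(x),x))=\sum_{l=0}^{p-1}g(\osh^l(\osh^m(x)))$, the sum of $g$ around the terminal periodic orbit. Thus the condition ``$f((x,\lp(x),x))\ge0$ for every eventually periodic $x$'' is precisely ``$\sum_{l=0}^{\lp(z)-1}g(\osh^l(z))\ge0$ for every periodic $z$''. The implication $\Phi([f])\in H^X_+\Rightarrow$ this condition is immediate: a nonnegative representative $g'=g-(F-F\circ\osh)$ has the same periodic-orbit sums as $g$, because the coboundary telescopes to $0$ around each closed orbit, and these sums are $\ge 0$.

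The converse is the main obstacle, and it is exactly where reducibility bites. One cannot simply take $F(x)=\inf_{n}\sum_{l=0}^{n-1}g(\osh^l(x))$: this is only upper semicontinuous and is genuinely discontinuous in the reducible case, since a point on a transient orbit can be approximated by points that subsequently dive into a long negative excursion. My fix has two ingredients. First, after recoding so that $g$ depends only on $x_0$ (a higher block presentation, which preserves both $(H^X,H^X_+)$ and the periodic-orbit sums), I would prove the \emph{uniform} lower bound $\sum_{l=0}^{n-1}g(\osh^l(x))\ge -C$ for all $x$ and all $n$: a sufficiently long path repeats a vertex and hence contains a loop, whose $g$-sum is nonnegative by hypothesis, so excising all such loops only decreases the sum and leaves a vertex-free path of bounded length, which bounds the original sum below. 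Second, I would make $F$ locally constant \emph{by construction}, setting $F(x):=\beta(x_0)$ where $\beta(v):=\inf\{\sum_{l=0}^{n-1}g(\osh^l(z)):z\in X,\ z_0=v,\ n\ge0\}$, finite by the uniform bound. A one-step unfolding gives the recursion $\beta(v)=\min\bigl(0,\ g(v)+\min_{v\to w}\beta(w)\bigr)$, whence $\beta(x_0)\le g(x)+\beta(x_1)$, i.e.\ $g-(F-F\circ\osh)\ge0$. Therefore $\Phi([f])=[g]\in H^X_+$, which completes the equivalence.
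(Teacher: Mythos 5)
Your proof is correct, and its overall architecture matches the paper's: the same explicit formula for $\Phi$ and its inverse, the same observation that periodic-orbit sums of $g$ are unchanged by coboundaries (giving the easy direction of the positivity claim), and the same reduction of the hard direction, via block recoding, to a statement about a finite graph all of whose cycles have nonnegative weight. The one genuine difference is how that finite combinatorial statement is settled. The paper places the weight $\omega$ on the edges of the graph whose vertices are the $m$-blocks of $\mathcal{L}(X)$, verifies nonnegativity of cycle sums exactly as you do, and then simply cites \cite[Proposition 3.3(2)]{BH} for the existence of a potential $\kappa$ with $\omega+\kappa\circ s-\kappa\circ r\ge 0$. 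You instead reprove that lemma from scratch: loop excision gives the uniform lower bound on partial sums, and the optimal-value function $\beta(v)=\inf\{\sum_{l<n}g(\osh^l(z)):z_0=v,\ n\ge 0\}$ is the potential, the needed inequality $\beta(v)\le g(v)+\beta(w)$ following from the one-step Markov property of the recoded shift (which you should note is guaranteed by taking the block length at least the order of the SFT). This buys self-containedness at the cost of a shortest-path argument; the citation buys brevity. Your diagnosis that the naive $F(x)=\inf_n\sum_{l<n}g(\osh^l(x))$ is only upper semicontinuous, so that local constancy must be forced by recoding, is precisely the point that makes the reducible case nontrivial, and it is handled in the same (implicit) way by the paper's passage to the block graph.
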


\begin{proof}
It is straightforward to check that $[f]\mapsto [g]$ where $g\in C(X,\ZZ)$ is given by $g(x)=f((x,1,\osh(x)))$, defines an isomorphism $\Phi:H^1(\G_X)\to H^X$, with inverse $\Phi^{-1}([g])((x,r-s,y))=\sum_{i=0}^{r-1}g(\osh^i(x))-\sum_{j=0}^{s-1}g(\osh^j(y))$ for every $(x,r-s,y)\in\G_X$. It is also easy to check that if $\Phi([f])\in H^X_+$, then $f((x,\lp(x),x))\ge 0$ for every eventually periodic point $x\in X$. 

Suppose $f\in\hom(\G,\ZZ)$ and that $f((x,\lp(x),x))\ge 0$ for every eventually periodic point $x\in X$. We shall prove that $\Phi([f])\in H^X_+$ by constructing $h\in C(X,\ZZ)$ such that $f((x,1,\osh(x)))+h(x)-h(\osh(x))\ge 0$ for all $x\in X$. Since $X$ is a shift of finite type and $f$ is continuous, there is an $m\ge 2$ such that if $v\in\LL(X)$ has length $m$ and $uv,vw\in\LL(X)$, then $uvw\in\LL(X)$, and such that if $x_{[0,m]}=x'_{[0,m]}$, then $f((x,1,\osh(x)))=f((x',1,\osh(x')))$. Let $E=(E^0,E^1,r,s)$ be the finite directed graph with $E^0=\{v\in\LL(X):v\text{ has length }m\}$, $E^1=\{w\in\LL(X):w\text{ has length }m+1\}$, $s(w)=w_{[0,m-1]}$ and $r(w)=w_{[1,m]}$ for $v\in E^1$. Let $\omega:E^1\to\ZZ$ be defined by $\omega(w)=f((x,1,\osh(x)))$ for some $x\in X$ with $x_{[0,m]}=w$. Let $w^1w^2\dots w^n$ be a cycle on $E$ (so $w^1,w^2,\dots,w^n\in E^1$, $r(w^i)=s(w^{i+1})$ for $i=1,2,\dots,n-1$, and $r(w^n)=s(w^1)$). Then there is a periodic $x\in X$ such that $x_{i+kn}=w^{i+1}_1$ ($w^{i+1}_1$ is the first letter of $w^{i+1}$) for $i=0,1,\dots,n-1$ and $k\in\NN_0$. It follows that there is $j\in\NN$ such that $n=j\lp(x)$ and that
\begin{equation*}
	\sum_{i=1}^n\omega(w^i)=\sum_{i=1}^nf((\osh^{i-1}(x),1,\osh^i(x)))=jf((x,\lp(x),x))\ge 0.
\end{equation*}
It therefore follows from \cite[Proposition 3.3(2)]{BH} that there is a function $\kappa:E^0\to\ZZ$ such that $\omega(w)+\kappa(s(w))-\kappa(r(w))\ge 0$ for $w\in E^1$. Define $h:X\to\ZZ$ by $h(x)=\kappa(x_{[0,m-1]})$. Then $h\in C(X,\ZZ)$ and
\begin{equation*}
f((x,1,\osh(x)))+h(x)-h(\osh(x))=\omega(x_{[0,m]})+\kappa(s(x_{[0,m]}))-\kappa(r(x_{[0,m]}))\ge 0
\end{equation*}
for $x\in X$.
\end{proof}

\begin{proposition}\label{john}
Let $X$ and $Y$ be two one-sided shifts of finite type and suppose that $\phi:\G_X\to\G_Y$ is an isomorphism. Then there is a continuous orbit equivalence $h:X\to Y$, a $h$-cocycle pair $(k,l)$ such that 
\begin{equation}\label{eq:5}
	\begin{split}
		\phi\bigl((&x,r-s,x')\bigr)\\&=\left(h(x),\sum_{i=0}^{r-1}[l(\osh^i(x))-k(\osh^i(x))]-\sum_{j=0}^{s-1}[l(\osh^j(x'))-k(\osh^j(x'))],h(x')\right)
	\end{split}
\end{equation}
for $(x,r-s,x')\in\G_X$ with $\osh^r(x)=\osh^s(x')$, and a $h^{-1}$-cocycle pair $(k',l')$ such that
\begin{equation}\label{eq:6}
	\begin{split}
		\phi^{-1}&\bigl((y,r'-s',y')\bigr)\\&=\left(h^{-1}(y),\sum_{i=0}^{r'-1}[l'(\osh[Y]^i(y))-k'(\osh[Y]^i(y))]-\sum_{j=0}^{s'-1}[l'(\osh[Y]^j(y'))-k'(\osh[Y]^j(y'))],h^{-1}(y')\right)
	\end{split}
\end{equation}
for $(y,r'-s',y')\in\G_Y$ with $\osh[Y]^{r'}(y)=\osh[Y]^{s'}(y')$.

The $h$-cocycle pair $(k,l)$ and the $h^{-1}$-cocycle pair $(k',l')$ are least period preserving if and only if $\phi((x,\lp(x),x))=(h(x),\lp(h(x)),h(x))$ for every eventually periodic $x\in X$, in which case there exist continuous maps $b,n:X\to\NN_0$ and $b',n':Y\to\NN_0$ such that $l(x)-k(x)=n(x)+b(x)-b(\osh(x))$ and $l'(y)-k'(y)=n'(y)+b'(y)-b'(\osh[Y](y))$ for $x\in X$ and $y\in Y$.
\end{proposition}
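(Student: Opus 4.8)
The plan is to extract all the required data directly from $\phi$ and its interaction with the canonical cocycle. Since $\phi$ is a groupoid isomorphism it restricts to a homeomorphism of unit spaces, so there is a homeomorphism $h:X\to Y$ determined by $\phi((x,0,x))=(h(x),0,h(x))$; this $h$ will be the orbit equivalence. Writing $c:\G_Y\to\ZZ$ for the canonical cocycle on $\G_Y$, I would then form $f:=c\circ\phi\in\hom(\G_X,\ZZ)$, which is continuous and additive. The map $x\mapsto(x,1,\osh(x))$ is a continuous section $X\to\G_X$, so $x\mapsto f((x,1,\osh(x)))$ is a continuous function on $X$, and this is the function I intend to realise as $l-k$. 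To split it into nonnegative continuous pieces $k,l$ satisfying \eqref{eq:1}, I would cover $X$ by clopen sets on which $\osh$ is injective (possible as $X$ is of finite type); on each such set $\{(x,1,\osh(x))\}$ is a compact open bisection whose $\phi$-image is covered by basic bisections of $\G_Y$ on which the defining exponents $i,j$ are locally constant, producing locally constant $k=j$, $l=i$ with $\osh[Y]^{k}(h(\osh(x)))=\osh[Y]^{l}(h(x))$. Patching over a clopen partition yields continuous $k,l:X\to\NN_0$ forming an $h$-cocycle pair, and running the identical construction for $\phi^{-1}$ gives $(k',l')$ together with \eqref{eq:2}; hence $h$ is a continuous orbit equivalence.

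For the formulas \eqref{eq:5} and \eqref{eq:6} I would exploit that $f$ is a homomorphism. The range and source coordinates are $h(x)$ and $h(x')$ by construction, so only the middle coordinate $f((x,r-s,x'))$ needs computing. Factoring $(x,r,\osh^r(x))=(x,1,\osh(x))(\osh(x),1,\osh^2(x))\cdots(\osh^{r-1}(x),1,\osh^r(x))$ and applying additivity gives $f((x,r,\osh^r(x)))=\sum_{i=0}^{r-1}(l-k)(\osh^i(x))$, and similarly for $x'$. Since $\osh^r(x)=\osh^s(x')$ we may write $(x,r-s,x')=(x,r,\osh^r(x))(x',s,\osh^s(x'))^{-1}$, and $f(\eta^{-1})=-f(\eta)$ then yields exactly the middle coordinate of \eqref{eq:5}; \eqref{eq:6} follows symmetrically from $f':=c\circ\phi^{-1}$.

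The heart of the proposition is the equivalence between least period preservation and the condition $\phi((x,\lp(x),x))=(h(x),\lp(h(x)),h(x))$, and I expect this to be the main obstacle. For eventually periodic $x$, formula \eqref{eq:5} gives $\phi((x,\lp(x),x))=(h(x),M_x,h(x))$ with $M_x=f((x,\lp(x),x))$; since $h(x)$ is eventually periodic (Proposition~\ref{sven}) the loop group at $h(x)$ is $\lp(h(x))\ZZ$, so the stated condition amounts to $M_x=\lp(h(x))$, the sign being forced positive by Proposition~\ref{sven}. The delicate point is matching $M_x$ with the sum $\sum_{i=0}^{\lp(x)-1}(l-k)(\osh^i(x))$ in the definition: when $x$ is genuinely periodic the representation $r=\lp(x)$, $s=0$ is admissible and \eqref{eq:5} gives this identity on the nose, so least period preservation at $x$ is literally $M_x=\lp(h(x))$. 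To pass to eventually periodic $x$ with a preperiod I would reduce to the periodic case by two invariances: first, $M_x=M_{\osh^{n}(x)}$ for $n$ beyond the preperiod, because the fundamental loops at $x$ and at the periodic tail $\osh^n(x)$ are conjugate in $\G_X$ and $f$ is constant on conjugacy classes; second, $\lp(h(\osh^j(x)))$ is independent of $j$, since \eqref{eq:1} forces $h(x)$ and $h(\osh(x))$ to be tail equivalent. Reconciling the sum-from-zero in the definition with the conjugation-invariant quantity $M_x$ at such points is where the care is needed, and I would organise the argument so that both conditions are seen to be equivalent to $M_x=\lp(h(x))$ holding at all periodic points.

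Finally, assuming the period-preservation condition, I would produce $b,n$ (and $b',n'$) from Proposition~\ref{erik}. Under the condition, $f((x,\lp(x),x))=M_x=\lp(h(x))\ge 1>0$ for every eventually periodic $x$, so by Proposition~\ref{erik} the class $\Phi([f])=[l-k]$ lies in $H^X_+$. By definition of $H^X_+$ this provides $n\in C(X,\NN_0)$ with $[n]=[l-k]$, equivalently $b\in C(X,\ZZ)$ with $l-k=n+b-b\circ\osh$; as $X$ is compact and $b$ is integer valued, adding a constant makes $b\ge 0$, giving $b:X\to\NN_0$. Applying the same reasoning to $f'=c\circ\phi^{-1}$ on $\G_Y$ produces $b',n':Y\to\NN_0$ with $l'-k'=n'+b'-b'\circ\osh[Y]$, which is precisely the remaining data required by Proposition~\ref{diego}.
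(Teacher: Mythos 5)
Your proposal follows essentially the same route as the paper: $h$ is the restriction of $\phi$ to the unit space, $(k,l)$ is produced by covering $X$ by clopen sets on which the image under $\phi$ of the bisection $\{(x,1,\osh(x))\}$ sits inside a basic bisection of $\G_Y$ with constant exponents, \eqref{eq:5} follows from additivity of $c\circ\phi$, and $b,n$ come from Proposition~\ref{erik} applied to $[l-k]=\Psi([1_Y])\in H^X_+$. All of that is correct and matches the paper's proof (the paper routes the local constancy through its Lemma 4.2 and the homeomorphisms $\alpha_{\phi(A)}$ of \cite[Proposition 3.3]{BCW}, but the content is the same).

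The one place you rightly sense danger is the passage from periodic to merely eventually periodic points, and it is worth being precise about why your proposed reduction does not quite close it. By conjugation-invariance of $c\circ\phi$ one does get $c(\phi((x,\lp(x),x)))=c(\phi((\osh^{s}(x),\lp(x),\osh^{s}(x))))$ for $s$ at least the preperiod, and $\lp(h(\osh^j(x)))$ is constant in $j$; so the condition $\phi((x,\lp(x),x))=(h(x),\lp(h(x)),h(x))$ for all eventually periodic $x$ does reduce to the periodic case. But the definition of least period preserving uses the sum $\sum_{i=0}^{\lp(x)-1}(l(\osh^i(x))-k(\osh^i(x)))$ \emph{starting at $i=0$}, and for a non-periodic eventually periodic $x$ with preperiod $s_0$ the pair $(r,s)=(\lp(x),0)$ is not an admissible representation of $(x,\lp(x),x)$ in \eqref{eq:5}; the discrepancy between the sum from $0$ and the conjugation-invariant quantity is exactly $c(\phi((x,0,\osh^{\lp(x)}(x))))$, which need not vanish (take $X=Y$, $h=\mathrm{id}$, $\phi((x,m,x'))=(x,m+g(x)-g(x'),x')$ for a suitable $g\in C(X,\ZZ)$, and $x=01^\infty$ in the full $2$-shift). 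The paper's own proof makes the same silent leap when it asserts that \eqref{eq:5} yields the sum from $0$ for all eventually periodic points, so you have reproduced, rather than introduced, this gap; it is harmless for the intended application because Proposition~\ref{diego} (via Claim~\ref{lemma_n} and Claim~\ref{periodic}) only ever invokes least period preservation at genuinely periodic points, where $(r,s)=(\lp(x),0)$ is admissible and the two conditions coincide. If you want your write-up airtight, either restrict the least-period-preserving condition to periodic points or verify the vanishing of $c(\phi((x,0,\osh^{\lp(x)}(x))))$ under your hypotheses rather than asserting the reduction.
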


\begin{proof}
The restriction of $\phi$ to $\G^{(0)}=X$ is a homeomorphism $h:X\to Y$ such that $r(\phi(\eta))=h(r(\eta))$ and $s(\phi(\eta))=h(s(\eta))$ for $\eta\in\G_X$. We shall prove that $h$ is a continuous orbit equivalence by constructing an $h$-cocycle pair $(k,l)$ satisfying \eqref{eq:5} and an $h^{-1}$-cocycle pair $(k',l')$ satisfying \eqref{eq:6}.

Let $x\in X$ be a point that is not isolated in $X$, and let $A$ be an open bisection containing $(\sigma(x),-1,x)$. Then $\phi(A)$ is an open bisection containing $\phi((\sigma(x),-1,x))$, so the map $\alpha_{\phi(A)}:s(\phi(A))\to r(\phi(A))$ defined by $\alpha_{\phi(A)}(s(\xi))=r(\xi)$ for $\xi\in \phi(A)$ is a homeomorphism with the property that there are continuous maps $k,l:s(\phi(A))\to\NN_0$ such that $\osh[Y]^{k(y)}(\alpha_{\phi(A)}(y))=\osh[Y]^{l(y)}(y)$ for every $y\in s(\phi(A))$ (cf. \cite[Proposition 3.3]{BCW}). It follows from Lemma~\ref{kurt} and the fact that $Y$ is a locally compact and totally disconnected Hausdorff space that there is an clopen neighbourhood $V$ of $h(x)$ and $k_x,l_x\in\NN_0$ such that $\osh[Y]^{k_x}(\alpha_{\phi(A)}(y))=\osh[Y]^{l_x}(y)$ for $y\in V$. Let $U_x=h^{-1}(V)$. Then $U_x$ is a clopen neighbourhood of $x$ and 
\begin{equation*}
\osh[Y]^{k_x}(h(\osh(x')))
=\osh[Y]^{k_x}(\alpha_{\phi(A)}(h(x')))
=\osh[Y]^{l_x}(h(x'))
\end{equation*}
for all $x'\in U_x$. If $x\in X$ is isolated in $X$, then we let $U_x=\{x\}$ and choose $k_x,l_x\in\NN_0$ such that $\osh[Y]^{k_x}(h(\osh(x)))=\osh[Y]^{l_x}(h(x))$ (we can do that because $\phi((\osh(x),-1,x))\in\G_Y$). 

Since $X$ is compact, it follows that there is a finite $F\subseteq X$ and mutually disjoint clopen sets $\{U'_x:x\in F\}$ such that $x\in U'_x\subseteq U_x$ for $x\in F$ and $\bigcup_{x\in F}=X$. If we define $k,l:X\to\NN_0$ by setting $k(x)=k_{x'}$ and $l(x)=l_{x'}$ for $x\in U'_{x'}$, then $(k,l)$ is an $h$-cocycle pair satisfying \eqref{eq:5}. We can in a similar way construct an $h^{-1}$-cocycle pair $(k',l')$ satisfying \eqref{eq:6}.

It follows from \eqref{eq:5} that if $x\in X$ is eventually periodic, then 
\begin{equation*}
\phi((x,\lp(x),x))=\left(h(x),\sum_{i=0}^{\lp(x)-1}(l(\osh^i(x))-k(\osh^i(x))),h(x)\right),
\end{equation*}
and it follows from \eqref{eq:6} that if $y\in Y$ is eventually periodic, then 
\begin{equation*}
\phi^{-1}((y,\lp(y),y))=\left(h^{-1}(y),\sum_{i=0}^{\lp(y)-1}(l'(\osh[Y]^i(y))-k'(\osh[Y]^i(y))),h^{-1}(y)\right).
\end{equation*}
It follows that $(k,l)$ and $(k',l')$ are least period preserving if and only if $\phi((x,\lp(x),x))=(h(x),\lp(h(x)),h(x))$ for every eventually periodic $x\in X$.

It follows from Proposition~\ref{erik} and \eqref{eq:5} that there is an isomorphism $\Psi:H^Y\to H^X$ such that $\Psi([f])=[g]$ where $g\in C(X,\ZZ)$ is given by 
\begin{equation*}
g(x)=\sum_{i=0}^{l(x)-1}f(\osh[Y]^i(h(x)))-\sum_{j=0}^{k(x)-1}f(\osh[Y]^j(h(\osh(x)))).
\end{equation*}
Suppose $\phi((x,\lp(x),x))=(h(x),\lp(h(x)),h(x))$ for every eventually periodic $x\in X$. It then follows from Proposition~\ref{erik} that $\Psi(H^Y_+)=H^X_+$. Let $1_Y$ be the function that sends every $y\in Y$ to $1$. Then $[1_Y]\in H^Y_+$, so $\Psi([1_Y])\in H^X_+$. Since $\Psi([1_Y])=[l-k]$, it follows that there are continuous maps $b,n:X\to\NN_0$ such that $l(x)-k(x)=n(x)+b(x)-b(\osh(x))$ for $x\in X$. The existence of continuous maps $b',n':Y\to\NN_0$ such that $l'(y)-k'(y)=n'(y)+b'(y)-b'(\osh[Y](y))$ for $y\in Y$, can be proved in a similar way.
\end{proof}

\begin{proof}[Proof of Theorem \ref{ib}]
Let $h:X\to Y$ be a continuous orbit equivalence. It follows from Proposition~\ref{sven} that $h$ maps eventually periodic points to eventually periodic points, and that there is an isomorphism $\phi:\G_X\to \G_Y$ such that $r(\phi(\eta))=h(r(\eta))$ and $s(\phi(\eta))=h(s(\eta))$ for $\eta\in\G_X$ and such that $\phi((x,\lp(x),x))=(h(x),\lp(h(x)),h(x))$ for every eventually periodic point $x\in X$. It follows from Proposition~\ref{john} that there are a least period preserving $h$-cocycle pair $(k,l)$, a least period preserving $h^{-1}$-cocycle pair $(k',l')$, and continuous maps $b,n:X\to\NN_0$ and $b',n':Y\to\NN_0$ such that $l(x)-k(x)=n(x)+b(x)-b(\osh(x))$ and $l'(y)-k'(y)=n'(y)+b'(y)-b'(\osh[Y](y))$ for $x\in X$ and $y\in Y$. It therefore follows from Proposition~\ref{diego} that $\X$ and $\Y$ are flow equivalent.
\end{proof}

\section{Flow equivalence and orbit equivalence for shifts of finite type and isomorphisms of their groupoids}

The following theorem follows directly from Proposition~\ref{sven} and Proposition~\ref{john} (it also follows from \cite[Corollary 4.6]{CW} and the fact that a shift of finite type can be represented by a finite graph that has no sinks). 

\begin{theorem}\label{orbit}
Let $X$ and $Y$ be two one-sided shifts of finite type. Then $X$ and $Y$ are continuously orbit equivalent if and only if the groupoids $\G_X$ and $\G_Y$ are isomorphic.
\end{theorem}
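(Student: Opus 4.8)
The plan is to prove the two implications separately, each by a direct appeal to one of the two propositions already established in the previous section; the theorem is essentially the statement that Proposition~\ref{sven} and Proposition~\ref{john} are inverse to one another at the level of existence.

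For the forward implication, suppose $X$ and $Y$ are continuously orbit equivalent and fix a continuous orbit equivalence $h\colon X\to Y$. I would invoke Proposition~\ref{sven} verbatim: it produces an isomorphism $\phi\colon\G_X\to\G_Y$ satisfying $r(\phi(\eta))=h(r(\eta))$ and $s(\phi(\eta))=h(s(\eta))$ for every $\eta\in\G_X$ (indeed with the additional normalisation $\phi((x,\lp(x),x))=(h(x),\lp(h(x)),h(x))$ on eventually periodic points, although for the present statement only the bare existence of an isomorphism is required). Hence $\G_X$ and $\G_Y$ are isomorphic, and this direction needs no further argument.

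For the converse, suppose $\phi\colon\G_X\to\G_Y$ is an isomorphism. Here the hypothesis of Proposition~\ref{john} is met exactly, and its conclusion hands us a continuous orbit equivalence $h\colon X\to Y$ --- realised concretely as the restriction of $\phi$ to the unit space $\G^{(0)}=X$ --- together with an $h$-cocycle pair $(k,l)$ satisfying \eqref{eq:5} and an $h^{-1}$-cocycle pair $(k',l')$ satisfying \eqref{eq:6}, which in particular witness the defining relations \eqref{eq:1} and \eqref{eq:2}. Thus $X$ and $Y$ are continuously orbit equivalent.

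Since both directions reduce immediately to results already in hand, I do not expect a genuine obstacle at the level of this theorem: the real work has been front-loaded into Proposition~\ref{sven} (the passage from an orbit equivalence to a groupoid isomorphism, which is where the delicate treatment of isolated periodic points, via Lemma~\ref{kurt} and Lemma~\ref{henrik}, actually enters) and into Proposition~\ref{john} (the passage from a groupoid isomorphism back to orbit equivalence through the explicit cocycle pairs). The only point meriting a moment's care is that the two propositions interlock as stated, namely that the homeomorphism produced in Proposition~\ref{john} is a continuous orbit equivalence in the precise sense of \eqref{eq:1}--\eqref{eq:2}; but this is exactly what that proposition asserts. One could alternatively deduce the theorem from \cite[Corollary 4.6]{CW}, using that every shift of finite type is conjugate to the edge shift of a finite graph with no sinks, but the route through Proposition~\ref{sven} and Proposition~\ref{john} is self-contained within this paper.
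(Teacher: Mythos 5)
Your proof is correct and matches the paper's own argument exactly: the paper states that Theorem~\ref{orbit} follows directly from Proposition~\ref{sven} (for the forward direction) and Proposition~\ref{john} (for the converse), and also notes the alternative route via \cite[Corollary 4.6]{CW}, just as you do. Nothing further is needed.
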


If $X$ and $Y$ are irreducible, then the result of Theorem~\ref{orbit} easily follows from \cite[Theorem 2.3]{MM} and the fact that every one-sided shift of finite type is conjugate to a one-sided topological Markov shift. If $X$ and $Y$ have no isolated periodic points, then the result of Theorem~\ref{orbit} follows from \cite[Proposition 3.2]{Renault}.

In the rest of this section we prove an analogue of Theorem~\ref{orbit} (Theorem~\ref{thm:1}), which, among other things, says that the groupoids of two one-sided shifts of finite type are stably isomorphic if and only if the corresponding two-sided shift spaces are flow equivalence. We will use results of \cite{CRS}, \cite{ERRS}, and \cite{Matui} for this.

Before we state and prove Theorem~\ref{thm:1}, we need to introduce some notation and a lemma.

Suppose $X$ is a one-sided shift space and $f\in C(X,\NN)$. Let
$$X_f:=\{(x,i)\in X\times\NN_0:i<f(x)\, \},$$ 
and equip $X_f$ with the subspace topology of $X\times\NN_0$, where the latter is equipped with the product topology of the topology of $X$ and the discrete topology on $\NN_0$. Then $X_f$ is compact. Define $\osh[X_f]:X_f\to X_f$ by 
$$\osh[X_f](x,0)=(\osh(x),f(\osh(x))-1\, )$$ 
and $\osh[X_f](x,i)=(x,i-1)$ for $x\in X$ and $i\in\{1,2,\dots,f(x)-1\}$. Then $\osh[X_f]$ is continuous and surjective.

Let $\alp$ be the alphabet of $X$ and $\mathcal N:=\{0,1,\dots,\max\{f(x):x\in X\}-1\}$. Define $\iota_{X_f}:X_f\to (\alp\times\mathcal{N})^{\NN_0}$ by 
\begin{multline*}
	\iota_{X_f}(x,i)=(a_0,i)(a_0,i-1)\dots (a_0,0)(a_1,f(\osh(x))-1)(a_1,f(\osh(x))-2)\\\dots (a_1,0)(a_2,f(\osh^2(x))-1)\dots
\end{multline*}
where $x=a_0a_1a_2\dots$. Then $\iota_{X_f}$ is continuous and injective and $\iota_{X_f}\circ\osh[X_f]=\sigma\circ\iota_{X_f}$, where $\sigma$ is the shift map on $(\alp\times\mathcal{N})^{\NN_0}$. It follows that $X^f:=\iota_{X_f}(X_f)$ is a subshift of $(\alp\times\mathcal{N})^{\NN_0}$, and that $(X_f,\osh[X_f])$ and $(X^f,\osh[X^f])$ are conjugate.

Let $X_0:=\iota_{X_f}(X\times\{0\})$ and define $\iota_X:X\to X_0$ by $\iota_X(x)=\iota_{X_f}(x,0)$. Then $X_0$ is a cross section of $X^f$ (i.e., $X_0$ is a compact and open subset of $X^f$, $X^f=\{\osh[X^f]^k(x):x\in X_0,\ k\in\NN_0\}$, there exists for all $x\in X_0$ a $k\in\NN$ such that $\osh[X^f]^k(x)\in X_0$, and $\frt_{X_0}:X_0\to\NN$ defined by $\frt_{X_0}(x)=\min\{k\in\NN:\osh[X^f]^k(x)\in X_0\}$ is continuous), and $\iota_X$ is a conjugacy between $(X,\osh)$ and $(X_0,\osh[X_0])$ where $\osh[X_0]$ is the \emph{first return map} defined by $\osh[X_0](x)=\osh[X^f]^{\frt_{X_0}(x)}(x)$. It follows that the two-sided shift spaces $\X$ and $\X^f$ are flow equivalent. So $X^f$ is of finite type if and only if $X$ is. 

If $X$ is of finite type, then we let $(\G_X)_f$ denote the groupoid 
 \begin{equation*}
 	\{(\eta,i,j)\in\G_X\times\NN_0\times\NN_0: 0\le i<f(r(\eta)),\ 0\le j<f(s(\eta))\, \},
 \end{equation*}
introduced in \cite{Matui}.

\begin{lemma}\label{lem:groupoid}
	Let $X$ be a one-sided shift space and $f\in C(X,\NN)$. Then $(\G_X)_f$ and $\G_{X^f}$ are isomorphic.
\end{lemma}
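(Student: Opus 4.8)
The plan is to factor the desired isomorphism through the Deaconu--Renault groupoid of the tower system $(X_f,\osh[X_f])$. Since $(\G_X)_f$ and $\G_{X^f}$ are defined, $X$, and hence $X^f$, is a shift of finite type, so $\osh[X_f]$ is a local homeomorphism (it is conjugate to the shift $\osh[X^f]$ on the shift of finite type $X^f$). Write $\G(X_f,\osh[X_f])$ for the groupoid $\{((x,i),N,(x',j)):\exists\,a,b\in\NN_0,\ N=a-b,\ \osh[X_f]^a(x,i)=\osh[X_f]^b(x',j)\}$. Because $\iota_{X_f}$ is a conjugacy from $(X_f,\osh[X_f])$ to $(X^f,\osh[X^f])$, the map $((x,i),N,(x',j))\mapsto(\iota_{X_f}(x,i),N,\iota_{X_f}(x',j))$ is a groupoid isomorphism $\G(X_f,\osh[X_f])\xrightarrow{\sim}\G(X^f,\osh[X^f])=\G_{X^f}$; functoriality of this construction under conjugacy is routine. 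It therefore remains to construct an isomorphism $\Theta\colon(\G_X)_f\to\G(X_f,\osh[X_f])$.

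The construction of $\Theta$ rests on counting $\osh[X_f]$-steps in the tower. Given $(\eta,i,j)\in(\G_X)_f$ with $\eta=(x,m,x')$ and witnesses $a,b\in\NN$ satisfying $m=a-b$ and $\osh^a(x)=\osh^b(x')$, the points $(x,i)$ and $(x',j)$ both flow forward to the common tower point at the top of the column over $\osh^a(x)=\osh^b(x')$, reached after $a'=i+1+\sum_{t=1}^{a-1}f(\osh^t(x))$ and $b'=j+1+\sum_{t=1}^{b-1}f(\osh^t(x'))$ steps respectively. I set $\Theta(\eta,i,j):=((x,i),a'-b',(x',j))$. The integer $N:=a'-b'$ is independent of the chosen witnesses: replacing $(a,b)$ by $(a+c,b+c)$ adds $\sum_{t=a}^{a+c-1}f(\osh^t(x))=\sum_{t=b}^{b+c-1}f(\osh^t(x'))$ to both $a'$ and $b'$, the two sums agreeing since $\osh^{a+s}(x)=\osh^{b+s}(x')$ for all $s\ge0$. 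A direct check then shows that $\Theta$ carries inverses to inverses and composable products to products (the step-counts add along a composition) and restricts to the identity on the common unit space $X_f$.

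For bijectivity I would argue as follows. For injectivity, suppose $\Theta$ agrees on $((x,m_1,x'),i,j)$ and $((x,m_2,x'),i,j)$; padding the witness pairs to have common first entry $A$, say to $(A,B_1)$ and $(A,B_2)$, so that $m_r=A-B_r$, the two cocycle values coincide only if $\sum_{t=1}^{B_1-1}f(\osh^t(x'))=\sum_{t=1}^{B_2-1}f(\osh^t(x'))$, and since $f\ge1$ these partial sums are strictly increasing in the number of terms, forcing $B_1=B_2$ and hence $m_1=m_2$. For surjectivity, an arbitrary element $((x,i),N,(x',j))$ comes with $a',b'$ and a common forward point $\osh[X_f]^{a'}(x,i)=(\osh^k(x),l)=(\osh^{k'}(x'),l)=\osh[X_f]^{b'}(x',j)$ (after padding, $k,k'\in\NN$); then $(x,k-k',x')\in\G_X$, and a short computation in which the levels $l$ cancel because the two columns coincide shows that $\Theta$ of the resulting element of $(\G_X)_f$ equals $((x,i),N,(x',j))$, which also reconfirms that $N$ is the genuine cocycle $a'-b'$.

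Finally, continuity in both directions should follow from the fact that $N$ is locally constant. On a basic compact open bisection $A\times\{(i,j)\}$ of $(\G_X)_f$, with $A=\{(x,p-q,x'):x\in U,\ x'\in U',\ \osh^p(x)=\osh^q(x')\}$ a basic bisection of $\G_X$ on which $f\circ r$ and $f\circ s$ are constant, one may take the witnesses $a=p$, $b=q$ fixed and shrink $U,U'$ so that each $f(\osh^t(x))$ with $t<p$ is constant; then $N$ is constant on $A$ and $\Theta(A\times\{(i,j)\})$ is a basic bisection of $\G(X_f,\osh[X_f])$. Thus $\Theta$ maps a basis of compact open bisections bijectively onto such bisections, so it is a continuous open bijection and hence a homeomorphism. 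The main obstacle is precisely this step-counting bookkeeping: pinning down the cocycle $N$ so that $\Theta$ is simultaneously well defined, multiplicative, bijective, and locally constant. Once $N$ is identified with the Deaconu--Renault cocycle $a'-b'$, each verification reduces to a finite computation, with the strict positivity of $f$ doing the essential work in the injectivity step.
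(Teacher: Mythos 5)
Your proposal is correct and takes essentially the same route as the paper, which simply writes down the map $((x,l-k,y),i,j)\mapsto\bigl(\iota_{X_f}(x,i),\,i-j+\sum_{h=1}^{l}f(\osh^h(x))-\sum_{h=1}^{k}f(\osh^h(y)),\,\iota_{X_f}(y,j)\bigr)$ and declares the verification routine. Your $\Theta$ composed with the conjugacy-induced isomorphism is exactly this map (your summation bounds differ from the paper's only by the term $f(\osh^a(x))=f(\osh^b(x'))$, which cancels), and your step-counting justifications supply the details the paper omits.
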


\begin{proof}
	It is routine to check that the map 
$$((x,l-k,y),i,j)\mapsto \left(\iota_{X_f}(x,i),i-j+\sum_{h=1}^{l}f(\osh^h(x))-\sum_{h=1}^{k}f(\osh^h(y)),\iota_{X_f}(y,j)\right)$$ 
where $x,y\in X$, $i,j,k,l\in\NN_0$, $i<f(x)$, $j<f(y)$, and $\osh^l(x)=\osh^k(y)$, is an isomorphism between $(\G_X)_f$ and $\G_{X^f}$.
\end{proof}

Suppose $\G$ is a groupoid with unit space $\G^{(0)}$ and range and source maps $r,s:\G\to\G^{(0)}$, and that $Z$ is a subset of $\G^{(0)}$. We let $\G|_Z:=s^{-1}(Z)\cap r^{-1}(Z)$, and say that $Z$ is \emph{$\G$-full} or just \emph{full} if $r(s^{-1}(Z))=\G^{(0)}$. Suppose $\G_1$ and $\G_2$ are two ample groupoids (see for example \cite{CRS}). As in \cite{CRS} and \cite{Matui}, we say that $\G_1$ and $\G_2$ are \emph{Kakutani equivalent} if there for $i=1,2$ is a $\G_i$-full clopen subset $Z_i\subseteq\G_i^{(0)}$ such that $\G_1|_{Z_1}$ and $\G_2|_{Z_2}$ are isomorphic as topological groupoids, and we say that $\G_1$ and $\G_2$ are \emph{groupoid equivalent} if there is a $\G_1$--$\G_2$ equivalence in the sense of \cite[Definition 2.1]{MRW}.  

As in \cite{CRS}, we write $\mathcal{R}$ for the full countable equivalence relation $\NN_0\times\NN_0$ regarded as a discrete principal groupoid with unit space $\NN_0$.

\begin{theorem}\label{thm:1}
	Let $X$ and $Y$ be one-sided shifts of finite type. The following are equivalent.
	\begin{enumerate}
		\item $\G_X$ and $\G_Y$ are Kakutani equivalent.
		\item $\G_X$ and $\G_Y$ are groupoid equivalent.
		\item $\G_X\times\mathcal{R}$ and $\G_Y\times\mathcal{R}$ are isomorphic as topological groupoids.
		\item There exist full open sets $X'\subseteq X$ and $Y'\subseteq Y$ such that $\G_X|_{X'}$ and $\G_Y|_{Y'}$ are isomorphic as topological groupoids.
		\item There exist $f\in C(X,\NN)$ and $g\in C(Y,\NN)$ such that $X^f$ and $Y^g$ are continuous orbit equivalent.
		\item $\X$ and $\Y$ are flow equivalent. 
	\end{enumerate}
\end{theorem}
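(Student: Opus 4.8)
The plan is to organise the six conditions into a purely groupoid-theoretic block $(1)$–$(4)$, the symbolic reformulation $(5)$, and the flow-equivalence statement $(6)$, and then to glue the blocks together using Lemma~\ref{lem:groupoid}, Theorem~\ref{orbit} and Theorem~\ref{ib}, invoking the general groupoid machinery of Matui \cite{Matui} and Carlsen--Ruiz--Sims \cite{CRS} together with the reducible flow-equivalence classification of \cite{ERRS}. Recall that $\G_X$ and $\G_Y$ are ample, Hausdorff and second-countable, so the abstract results apply. I would first settle $(1)\Leftrightarrow(2)\Leftrightarrow(3)\Leftrightarrow(4)$, none of which uses that the groupoids come from shifts: a full clopen set is in particular a full open set, so $(1)\Rightarrow(4)$; the reduction of an ample groupoid to a full open subset of its unit space is groupoid equivalent to the whole groupoid (the linking set $s^{-1}(X')$ implements a $\G_X$--$\G_X|_{X'}$ equivalence), which together with $\G_X|_{X'}\cong\G_Y|_{Y'}$ gives $(4)\Rightarrow(2)$; the equivalence $(2)\Leftrightarrow(3)$ of groupoid equivalence and stable isomorphism is \cite{CRS} (resting on \cite{MRW} and \cite{Matui}); and an isomorphism $\G_X\times\mathcal{R}\cong\G_Y\times\mathcal{R}$ restricts to an isomorphism of full clopen reductions, giving $(3)\Rightarrow(1)$.

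Next I would prove $(1)\Leftrightarrow(5)$. Here the key external input is Matui's description of a Kakutani equivalence class: $\G_X$ and $\G_Y$ are Kakutani equivalent if and only if there are $f\in C(X,\NN)$ and $g\in C(Y,\NN)$ with $(\G_X)_f\cong(\G_Y)_g$. By Lemma~\ref{lem:groupoid} I may replace these blow-ups by the shift groupoids $\G_{X^f}$ and $\G_{Y^g}$, and by Theorem~\ref{orbit} the isomorphism $\G_{X^f}\cong\G_{Y^g}$ is equivalent to $X^f$ and $Y^g$ being continuously orbit equivalent. Chaining these equivalences yields $(1)\Leftrightarrow(5)$.

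The implication $(5)\Rightarrow(6)$ is where the main theorem of the present paper enters. Since $X^f$ and $Y^g$ are again shifts of finite type (as noted before Lemma~\ref{lem:groupoid}) and are continuously orbit equivalent, Theorem~\ref{ib} shows that $\X^f$ and $\Y^g$ are flow equivalent. Combining this with the flow equivalences $\X\sim\X^f$ and $\Y\sim\Y^g$ recorded in the same discussion, and using transitivity of flow equivalence, gives $(6)$.

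It remains to close the cycle with $(6)\Rightarrow(3)$, and I expect this to be the main obstacle, since it is the only step that runs from the geometric flow equivalence back to an isomorphism of stabilised groupoids and is not achieved by an explicit construction. The plan is to use \cite{ERRS}, which promotes flow equivalence of the two-sided shifts to a diagonal-preserving stable isomorphism of the associated Cuntz--Krieger (graph) algebras, and then \cite{CRS}, which translates such a diagonal-preserving stable $*$-isomorphism into a topological groupoid isomorphism $\G_X\times\mathcal{R}\cong\G_Y\times\mathcal{R}$, i.e.\ condition $(3)$. Unlike the other steps, this one rests on the full reducible classification theory rather than on the explicit cocycle and groupoid constructions developed earlier, which is why I regard it as the crux.
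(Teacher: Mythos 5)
Your proposal is correct and follows essentially the same route as the paper: the block $(1)$--$(4)$ is settled by \cite[Theorem 3.2]{CRS}, $(1)\Rightarrow(5)$ by \cite[Lemma 4.6]{Matui} together with Lemma~\ref{lem:groupoid} and Theorem~\ref{orbit}, $(5)\Rightarrow(6)$ by Theorem~\ref{ib} and the flow equivalences $\X\sim\X^f$, $\Y\sim\Y^g$, and $(6)\Rightarrow(3)$ by combining \cite{ERRS} with \cite{CRS}. The only (minor) difference is in the last leg: the paper passes from flow equivalence to move equivalence of representing graphs via \cite[Lemma 5.1]{ERRS} and then directly to an isomorphism $\G_X\times\mathcal{R}\cong\G_Y\times\mathcal{R}$ via \cite[Corollary 4.9]{CRS}, staying entirely at the groupoid level rather than detouring through a diagonal-preserving stable isomorphism of the graph $C^*$-algebras as you suggest.
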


\begin{proof}
The equivalence of (1)--(4) follows from \cite[Theorem 3.2]{CRS}.
	
(1)$\implies$(5): It follows from \cite[Lemma 4.6]{Matui} that there are $f\in C(X,\NN)$ and $g\in C(Y,\NN)$ such that $(\G_X)_f$ and $(\G_Y)_g$ are isomorphic. Since $(\G_X)_f$ is isomorphic to $\G_{X^f}$, and $(\G_Y)_g$ is isomorphic to $\G_{Y^g}$, it follows that $\G_{X^f}$ and $\G_{Y^g}$ are isomorphic, so an application of Theorem~\ref{orbit} gives us that $X^f$ and $Y^g$ are continuous orbit equivalent.
	
(5)$\implies$(6): It follows from Theorem~\ref{ib} that $\X^f$ and $\Y^g$ are flow equivalent. Since $\X$ and $\X^f$ are flow equivalent, and $\Y$ and $\Y^g$ are flow equivalent, it follows that $\X$ and $\Y$ are flow equivalent.
	
(6)$\implies$(3): This is probably well-known to the experts, but we have not been able to find a proof in the literature, so we give one here for completeness. Let $E_1$ and $E_2$ be finite directed graphs with no sinks and no sources such that the one-sided edge shift of $E_1$ is conjugate to $X$ and the one-sided edge shift of $E_2$ is conjugate to $Y$. Then the two-sided edge shifts of $E_1$ and $E_2$ are flow equivalent, so it follows from \cite[Lemma 5.1]{ERRS} that $E_1$ and $E_2$ are move equivalent. Since the groupoid of $E_1$ is isomorphic to $\G_X$ and the groupoid of $E_2$ is isomorphic to $\G_Y$, an application of \cite[Corollary 4.9]{CRS} gives that $\G_X\times\mathcal{R}$ and $\G_Y\times\mathcal{R}$ are isomorphic as topological groupoids. 
\end{proof}

\section{Graph $C^*$-algebras and Leavitt path algebras}

We now apply the results of the previous section to strengthen some of the results of \cite{ABHS,BCaH,BCW,CRS} in the special case of finite directed graphs with no sinks and no sources.

Suppose $E$ is a directed graph and $R$ is a unital ring. We write $\G_E$ for the groupoid of $E$ (see for example \cite{AER,BCW,Carlsen,CW,CS}), $C^*(E)$ for the $C^*$-algebra of $E$ (see for example \cite{AER,BCW,Carlsen,ERRS,Sor,Tomforde}), $D(E)$ for the $C^*$-subalgebra $\overline{\operatorname{span}}\{s_\mu s_\mu^*:\mu\in E^*\}$ of $C^*(E)$, $L_R(E)$ for the Leavitt path $R$-algebra of $E$ (see for example \cite{Carlsen,Tomforde2}), $D_R(E)$ for the $*$-subalgebra $\operatorname{span}_R\{\mu \mu^*:\mu\in E^*\}$ of $L_R(E)$, and $X_E$ for the one-sided edge shift of $E$.

The next result depends on the equivalence of orbit equivalence as defined in \cite{BCW} and isomorphisms of the associated groupoids. This was established in \cite{BCW} in the presence of the so-called Condition (L), and generalised to the given setting in the independent articles \cite{AER} and \cite{CW}. We will follow \cite{CW} since it is much better aligned with the approach in the paper at hand. Combining Theorem 5.1 further with results of \cite{BCW} and \cite{CR2}, we obtain the following corollary:

\begin{corollary}\label{cor:2}
Let $E$ and $F$ be finite directed graphs with no sinks and no sources and let $R$ be a unital commutative integral domain. The following are equivalent.
\begin{enumerate}
	\item $X_E$ and $X_F$ are continuous orbit equivalent.
	\item $\G_E$ and $\G_F$ are isomorphic.
	\item There is a $*$-isomorphism $\phi:C^*(E)\to C^*(F)$ such that $\phi(D(E))=D(F)$.
	\item There is a ring isomorphism $\beta:L_R(E)\to L_R(F)$ such that $\beta(D_R(E))=D_R(F)$.		
	\item There is a $*$-algebra isomorphism $\gamma:L_R(E)\to L_R(F)$ such that $\gamma(D_R(E))=D_R(F)$.
	\item $E$ and $F$ are orbit equivalent as defined in \cite{BCW}.
\end{enumerate}
\end{corollary}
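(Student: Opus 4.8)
The plan is to route all six conditions through condition~(2), the isomorphism of the groupoids $\G_E$ and $\G_F$, and then read off the remaining equivalences from Theorem~\ref{orbit} together with reconstruction results in the literature. The first thing I would record is that, since $E$ and $F$ are finite with no sinks and no sources, the one-sided edge shifts $X_E$ and $X_F$ are shifts of finite type, and the graph groupoid $\G_E$ is isomorphic to the shift groupoid $\G_{X_E}$ (and likewise for $F$), exactly as already used in the proof of Theorem~\ref{thm:1}. Under these identifications condition~(2) is literally the assertion that $\G_{X_E}$ and $\G_{X_F}$ are isomorphic, so the equivalence (1)$\iff$(2) is immediate from Theorem~\ref{orbit} applied to $X_E$ and $X_F$.

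For (2)$\iff$(6) I would invoke the reconstruction of graph orbit equivalence from groupoid isomorphism. This equivalence was established in \cite{BCW} under Condition~(L) and extended, with no standing hypothesis on the graphs, in the independent articles \cite{AER} and \cite{CW}; since $E$ and $F$ have no sinks and no sources we are inside the setting treated there, and following \cite{CW} yields (2)$\iff$(6) directly.

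The algebraic conditions (3), (4) and (5) I would treat by pairing the functoriality of the algebra constructions with the appropriate diagonal-preserving reconstruction theorems. In one direction, an isomorphism $\G_E\cong\G_F$ of topological groupoids induces a $*$-isomorphism $C^*(\G_E)\cong C^*(\G_F)$ and an $R$-algebra (indeed $*$-algebra) isomorphism of the associated Steinberg algebras $A_R(\G_E)\cong A_R(\G_F)$, each carrying the canonical abelian subalgebra onto its counterpart; under the standard identifications $C^*(\G_E)\cong C^*(E)$ with diagonal $D(E)$, and $A_R(\G_E)\cong L_R(E)$ with diagonal $D_R(E)$, this gives (2)$\implies$(3), (2)$\implies$(5) and a fortiori (2)$\implies$(4). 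For the converses I would appeal to the reconstruction results of \cite{BCW} for $C^*(E)$ and of \cite{CR2} for $L_R(E)$: a diagonal-preserving isomorphism of the algebras recovers an isomorphism of the underlying groupoids, producing (3)$\implies$(2), (4)$\implies$(2) and (5)$\implies$(2). The hypothesis that $R$ is a unital commutative integral domain is precisely what powers the Steinberg-algebra reconstruction, and it also closes the loop between (4) and (5): the diagonal-preserving \emph{ring} isomorphism in (4) recovers the groupoid, and the recovered groupoid isomorphism then feeds back through functoriality to give the diagonal-preserving $*$-algebra isomorphism of (5). Assembling these implications into a chain through~(2) yields the full six-way equivalence.

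The main obstacle I anticipate is the converse (reconstruction) direction in the non-effective case. Because $E$ and $F$ may contain cycles without entries, the groupoids $\G_E$ and $\G_F$ need not be topologically principal, so the classical Renault reconstruction theorem does not apply verbatim; the substantive input is exactly that the reconstruction theorems cited from \cite{BCW} and \cite{CR2}, within the general framework of \cite{CRS}, have been proved without any effectiveness hypothesis for these graph and shift groupoids. Granting those, everything else in the argument is formal, and the corollary follows.
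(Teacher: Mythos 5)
Your proposal is correct and follows essentially the same route as the paper: identify $\G_E$ with $\G_{X_E}$, obtain (1)$\iff$(2) from Theorem~\ref{orbit}, and then dispatch (3) via \cite{BCW}, (4) and (5) via \cite{CR2}, and (6) via \cite{CW}. The extra discussion of functoriality and of the non-effective case is accurate but not needed beyond what those cited results already supply.
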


\begin{proof}
The equivalence of (1) and (2) is proved in Theorem~\ref{orbit}. The equivalence of (2) and (3) follows from \cite[Theorem 5.1]{BCW}, and the equivalence of (2), (4), and (5) follows from \cite[Corollary 4.2]{CR2}. Finally, the equivalence of (2) and (6) follows from \cite[Corollary 4.6]{CW}.
\end{proof}

\begin{remark}
It follows from \cite[Corollary 6]{Carlsen} that if $R=\mathbb{Z}$ (or more generally, if $R$ is a \emph{kind} $*$-subring of $\CC$, see \cite[Section 3]{Carlsen}), then the condition ``$L_R(E)$ and $L_R(F)$ are isomorphic $*$-rings'' can be added to the list of equivalent conditions in Corollary \ref{cor:2}.
\end{remark}

Suppose $E$ is a directed graph. We write $\X_E$ for the two-sided edge shift of $E$, and as in \cite{CRS} and \cite{CW}, we write $SE$ for the directed graph obtained by attaching a head to each vertex of $E$ (see \cite[Definition 4.2]{Tomforde}). We write $\mathcal{K}$ for the $C^*$-algebra of compact operators on $l^2(\NN_0)$ and $\mathcal{C}$ for the maximal abelian subalgebra of $\mathcal{K}$ consisting of diagonal operators. 

Suppose $R$ is a unital ring. We write $M_\infty(R)$ for the ring of finitely supported, countable infinite square matrices over $R$, and $D_\infty(R)$ for the abelian subring of $M_\infty(R)$ consisting of diagonal matrices.

By combining Theorem~\ref{thm:1} with results of \cite{CR2,CRS,CW,ERRS}, we obtain the following corollary.

\begin{corollary}\label{cor:1}
Let $E$ and $F$ be finite directed graphs with no sinks and no sources and let $R$ be a unital commutative integral domain. The following are equivalent.
	
	\begin{enumerate}
		\item $E$ and $F$ are move equivalent as defined in \cite{Sor}.
		\item $\X_E$ and $\X_F$ are flow equivalent.
		\item $\G_E$ and $\G_F$ are Kakutani equivalent.
		\item $\G_E$ and $\G_F$ are groupoid equivalent.
		\item $\G_E\times\mathcal{R}$ and $\G_E\times\mathcal{R}$ are isomorphic as topological groupoids.
		\item $\G_{SE}$ and $\G_{SF}$ are isomorphic as topological groupoids.
		\item $SE$ and $SF$ are orbit equivalent as defined in \cite{BCW}.
		\item There is a $*$-isomorphism $\phi:C^*(E)\otimes\mathcal{K}\to C^*(F)\otimes\mathcal{K}$ such that $\phi(D(E)\otimes\mathcal{C})=D(F)\otimes\mathcal{C}$.
		\item There is a ring isomorphism $\beta:L_R(E)\otimes M_\infty(R)\to L_R(F)\otimes M_\infty(R)$ such that $\beta(D_R(E)\otimes D_\infty(R))=D_R(F)\otimes D_\infty(R)$.
		\item There is a $*$-algebra isomorphism $\gamma:L_R(E)\otimes M_\infty(R)\to L_R(F)\otimes M_\infty(R)$ such that $\gamma(D_R(E)\otimes D_\infty(R))=D_R(F)\otimes D_\infty(R)$.
		\item There is a $*$-isomorphism $\psi:C^*(SE)\to C^*(SF)$ such that $\psi(D(SE))=D(SF)$.
		\item There is a ring isomorphism $\rho:L_R(SE)\to L_R(SF)$ such that $\rho(D_R(SE))=D_R(SF)$.
		\item There is a $*$-algebra isomorphism $\tau:L_R(SE)\to L_R(SF)$ such that $\tau(D_R(SE))=D_R(SF)$.
		\item There exist projections $p_E\in D(E)$ and $p_F\in D(F)$ and a $*$-isomorphism $\omega:p_EC^*(E)p_E\to p_FC^*(F)p_F$ such that $p_E$ is full in $C^*(E)$, $p_F$ is full in $C^*(F)$, and $\omega(p_ED(E))=p_FD(F)$.
		\item There exist idempotents $p_E\in D_R(E)$ and $p_F\in D_R(F)$ and a ring isomorphism $\zeta:p_EL_R(E)p_E\to p_FL_R(F)p_F$ such that $p_E$ is full in $L_R(E)$, $p_F$ is full in $L_R(F)$, and $\zeta(p_ED_R(E))=p_FD_R(F)$.
		\item There exist projections $p_E\in D_R(E)$ and $p_F\in D_R(F)$ and a $*$-algebra isomorphism $\kappa:p_EL_R(E)p_E\to p_FL_R(F)p_F$ such that $p_E$ is full in $L_R(E)$, $p_F$ is full in $L_R(F)$, and $\kappa(p_ED_R(E))=p_FD_R(F)$.
	\end{enumerate}
\end{corollary}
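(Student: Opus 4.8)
The plan is to organise the sixteen conditions around a single dynamical hub---flow equivalence of $\X_E$ and $\X_F$---and to reach every other condition either from Theorem~\ref{thm:1} (for the dynamical and groupoid conditions) or from the established dictionaries between ample groupoids and their $C^*$- and Leavitt path algebras (for the operator-algebraic conditions). The first thing I would record is the standard identification of the graph groupoid $\G_E$ with the groupoid $\G_{X_E}$ of the one-sided edge shift $X_E$, which is a shift of finite type; this is already used in the proof of Corollary~\ref{cor:2}. Applying Theorem~\ref{thm:1} with $X=X_E$ and $Y=X_F$ then yields the equivalence of conditions (2)--(5) at once: condition (2) is Theorem~\ref{thm:1}(6), condition (3) is Theorem~\ref{thm:1}(1), condition (4) is Theorem~\ref{thm:1}(2), and condition (5) is Theorem~\ref{thm:1}(3). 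To attach condition (1), I would invoke \cite[Lemma 5.1]{ERRS}, which states that finite graphs with no sinks and no sources are move equivalent precisely when their two-sided edge shifts are flow equivalent, giving (1)$\Leftrightarrow$(2).

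Next I would treat the stabilised graph $SE$. The construction is arranged so that $\G_{SE}$ is isomorphic to $\G_E\times\mathcal{R}$ with the canonical diagonals matched, and correspondingly $C^*(SE)\cong C^*(E)\otimes\mathcal{K}$ carrying $D(SE)$ onto $D(E)\otimes\mathcal{C}$, and $L_R(SE)\cong L_R(E)\otimes M_\infty(R)$ carrying $D_R(SE)$ onto $D_R(E)\otimes D_\infty(R)$ (cf.\ \cite{CRS} and \cite{CW}). The groupoid identification gives (5)$\Leftrightarrow$(6) immediately, and condition (7) is equivalent to (6) by the graph version of Theorem~\ref{orbit}, namely \cite[Corollary 4.6]{CW} applied to $SE$ and $SF$, identifying orbit equivalence of the graphs with isomorphism of their groupoids. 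For the operator-algebraic conditions I would use two dictionaries uniformly: diagonal-preserving $*$-isomorphism of graph $C^*$-algebras is equivalent to isomorphism of the associated groupoids by \cite[Theorem 5.1]{BCW}, and diagonal-preserving ring or $*$-algebra isomorphism of Leavitt path algebras is equivalent to groupoid isomorphism by \cite[Corollary 4.2]{CR2}. Applying these to $SE$ and $SF$ gives (6)$\Leftrightarrow$(11), (6)$\Leftrightarrow$(12), and (6)$\Leftrightarrow$(13); the stabilisation identities above then identify (11) with (8), (12) with (9), and (13) with (10), so that (8)--(13) all join the chain.

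The remaining conditions (14)--(16) are the full-corner statements, which I would route through condition (4) of Theorem~\ref{thm:1}. A full projection $p_E\in D(E)$ corresponds to a full compact open subset $Z\subseteq X_E$; the reduction $p_EC^*(E)p_E$ is canonically $C^*(\G_E|_Z)$ with diagonal $p_ED(E)\cong C(Z)$, and the analogous statement holds for $p_EL_R(E)p_E$ and $\G_E|_Z$. Hence a diagonal-preserving isomorphism of full corners is exactly an isomorphism of restricted groupoids $\G_E|_Z\cong\G_F|_{Z'}$ over full sets $Z$ and $Z'$, which is the content of Theorem~\ref{thm:1}(4); applying \cite[Theorem 5.1]{BCW} for the $C^*$-case and \cite[Corollary 4.2]{CR2} for the two Leavitt cases closes the loop and yields (14), (15), and (16).

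The serious work here is not in any single implication but in the bookkeeping that makes the three stabilisation pictures---tensoring the groupoid with $\mathcal{R}$, forming the graph $SE$, and stabilising the algebra---simultaneously compatible with the $C^*$-diagonal, the Leavitt diagonal, and the groupoid unit space, and in checking that the cited dictionaries genuinely apply to the (infinite) stabilised graphs $SE$ and $SF$. The main point to verify carefully is therefore that each isomorphism produced above can be taken to preserve the relevant diagonal and to respect fullness, so that the passage between the groupoid, graph, and algebraic descriptions is diagonal-preserving throughout; once these compatibilities are in place, every equivalence reduces to one application of an already-established result.
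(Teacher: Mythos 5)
Your proposal is correct and follows essentially the same route as the paper: the dynamical core (1)--(5) comes from Theorem~\ref{thm:1} together with \cite[Lemma 5.1]{ERRS} and the identification $\G_E=\G_{X_E}$, and the algebraic conditions are reached through the groupoid--algebra dictionaries; the paper simply cites the packaged results \cite[Theorem 4.2]{CRS}, \cite[Corollary 4.5]{CRS}, \cite[Corollary 4.6]{CW} and \cite[Corollary 4.8]{CR2} where you re-derive them from \cite[Theorem 5.1]{BCW} and \cite[Corollary 4.2]{CR2} applied to $SE$ and $SF$ together with the explicit stabilisation identifications. One small correction: the corner conditions (14)--(16) should be routed through condition (3) (Kakutani equivalence, where the full subsets are \emph{clopen}, hence compact open and therefore given by projections in the diagonal) rather than through condition (4), whose full subsets are merely open and need not correspond to projections in $D(E)$; since (3) and (4) are equivalent by Theorem~\ref{thm:1}, this does not affect the validity of the argument.
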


The implication $(2)\implies (8)$ of Corollary~\ref{cor:1} was originally proved for irreducible graphs satisfying condition (L) by Cuntz and Krieger in \cite[Theorem 4.1]{CK} (in the setting of Cuntz--Krieger graphs), and for reducible graphs satisfying condition (L) by Cuntz in \cite[Theorem 2.4]{Cuntz} (also in the setting of Cuntz--Krieger graphs). If the graphs $E$ and $F$ both satisfy condition (L), then the equivalence of (3) and (14) of Corollary~\ref{cor:1} follows from \cite[Theorem 5.4]{Matui}.

\begin{proof}[Proof of Corollary~\ref{cor:1}]
The equivalence of (1) and (2) is proved in \cite[Lemma 5.1]{ERRS}. Since $\G_E=\G_{X_E}$ and $\G_F=\G_{X_F}$, the equivalence of (2)--(5) follows from Theorem~\ref{thm:1}. The equivalence of (5), (6), (8), and (11) follows from \cite[Theorem 4.2]{CRS}, the equivalence of (3) and (14) follows from \cite[Corollary 4.5]{CRS}, and the equivalence of (6) and (7) follows from \cite[Corollary 4.6]{CW}. Finally, the equivalence of (9)--(16) is proved in \cite[Corollary 4.8]{CR2}.
\end{proof}

\begin{remark}
It follows from \cite[Proposition 5]{Carlsen} and an argument similar to the one used in the proof of \cite[Proposition 6.1]{JS} that if $R=\mathbb{Z}$ (or more generally, if $R$ is a \emph{kind} $*$-subring of $\CC$, see \cite[Section 3]{Carlsen}), then the following 3 conditions can be added to the list of equivalent conditions in Corollary \ref{cor:1}.
	\begin{enumerate}
		\item[(17)] $L_R(E)\otimes M_\infty(R)$ and $L_R(F)\otimes M_\infty(R)$ are isomorphic $*$-rings.
		\item[(18)] $L_R(SE)$ and $L_R(SF)$ are isomorphic $*$-rings.
		\item[(19)] There are projections $p_E\in D_R(E)$ and $p_F\in D_R(F)$  such that $p_E$ is full in $L_R(E)$, $p_F$ is full in $L_R(F)$, and $p_EL_R(SE)p_E$ and $p_FL_R(SF)p_F$ are isomorphic $*$-rings.
	\end{enumerate}
\end{remark}

\section{Cuntz--Krieger algebras}

In this final section, we apply the results of Section 5 and Section 6 to Cuntz--Krieger algebras and topological Markov chains and directed graphs of $\{0,1\}$-matrix in order to generalise \cite[Theorem 2.3]{MM} and \cite[Corollary 3.8]{MM} from the irreducible to the general case.

Let $A$ be an $N\times N$-matrix with entries in $\{0,1\}$, and assume that every row and every column of $A$ is nonzero. As in \cite{CK,MM}, we denote by $\Oo_A$ the Cuntz--Krieger algebra of $A$ with canonical abelian subalgebra $\D_A$, by $X_A$ the one-sided shift space $\{(x_i)_{i\in\NN_0}\in\{1,2,\dots,N\}^{\NN_0}:A(x_i,x_{i+1})=1\text{ for all }i\in\NN_0\}$ associated with $A$, and by $\TSS$ the two-sided subshift of $X_A$ (if $A$ does not satisfy Condition (I), then we let $\Oo_A$ denote the universal Cuntz--Krieger algebra $\mathcal{A}\Oo_A$ introduced in \cite{aHR}). Notice that the groupoid $\G_{X_A}$ is equal to the groupoid $G_A$ considered in \cite{MM}. 

We get from Theorem~\ref{orbit} and \cite[Theorem 5.1]{BCW} (or Corollary~\ref{cor:2}) the following generalisation of \cite[Theorem 2.3]{MM}.

\begin{corollary}\label{CK}
Let $A$ and $B$ be finite square matrices with entries in $\{0,1\}$, and assume that every row and every column of $A$ and $B$ is nonzero. The following conditions are equivalent.
\begin{enumerate}
	\item $X_A$ and $X_B$ are continuously orbit equivalent.
	\item The groupoids $\G_{X_A}$ and $\G_{X_B}$ are isomorphic.
	\item There is a $*$-isomorphism $\Psi:\Oo_A\to\Oo_B$ such that $\Psi(\D_A)=\D_B$.
\end{enumerate}
\end{corollary}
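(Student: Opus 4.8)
The plan is to deduce the corollary from Theorem~\ref{orbit} together with the groupoid model of the Cuntz--Krieger algebra, exactly along the lines indicated before the statement (Theorem~\ref{orbit} and \cite[Theorem 5.1]{BCW}, or equivalently Corollary~\ref{cor:2}). The first thing I would record is that for a finite $\{0,1\}$-matrix $A$ with no zero rows or columns, the vertex shift $X_A$ is a one-sided shift of finite type: it is the one-step topological Markov shift determined by forbidding the words $ij$ with $A(i,j)=0$, so the defining property of a shift of finite type holds with $m=1$ (legality of a word depends only on consecutive pairs, so if $uv,vw\in\LL(X_A)$ with $|v|=1$ then $uvw\in\LL(X_A)$). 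The same applies to $X_B$. Hence Theorem~\ref{orbit} applies directly and yields the equivalence of (1) and (2).

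For the remaining equivalence I would pass to the graph picture. Let $E_A$ be the finite directed graph with vertex set $\{1,\dots,N\}$ and exactly one edge from $i$ to $j$ whenever $A(i,j)=1$; since every row and every column of $A$ is nonzero, $E_A$ has no sinks and no sources. Because $A$ has entries in $\{0,1\}$, an edge of $E_A$ is determined by its source and range, so the map sending an edge sequence to its sequence of sources is a conjugacy between the one-sided edge shift $X_{E_A}$ and the vertex shift $X_A$; as the groupoid depends only on the dynamics $(X,\osh)$, this conjugacy induces an isomorphism $\G_{E_A}\cong\G_{X_A}$, and likewise $\G_{E_B}\cong\G_{X_B}$. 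Under the standard identification $\Oo_A\cong C^*(E_A)$ carrying $\D_A$ onto $D(E_A)$, condition (3) is precisely the statement that there is a $*$-isomorphism $C^*(E_A)\to C^*(E_B)$ mapping $D(E_A)$ onto $D(E_B)$.

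With these identifications, the equivalence of (2) and (3) is exactly the equivalence of conditions (2) and (3) of Corollary~\ref{cor:2} applied to $E_A$ and $E_B$, which in turn rests on \cite[Theorem 5.1]{BCW}. Combined with (1)$\iff$(2) this closes the cycle. The one point requiring care, and the main obstacle, is the Condition~(I) bookkeeping: the correspondence between diagonal-preserving $*$-isomorphism and groupoid isomorphism in \cite{BCW} concerns the \emph{full} groupoid $C^*$-algebra, so I must ensure that the symbol $\Oo_A$ in (3) is interpreted as $C^*(\G_{X_A})$ with canonical diagonal $C(X_A)$ --- which is the usual $\Oo_A$ when $A$ satisfies Condition~(I), and the universal algebra $\mathcal{A}\Oo_A$ otherwise, exactly as stipulated in the setup preceding the corollary. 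Once this matching of conventions is verified, no further computation is needed.
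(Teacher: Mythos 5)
Your proposal is correct and follows essentially the same route as the paper: (1)$\iff$(2) is Theorem~\ref{orbit}, and (2)$\iff$(3) comes from identifying $\G_{X_A}$ with the graph groupoid $\G_{E_A}$ and $\Oo_A$ with $C^*(E_A)$ (diagonals matching), then invoking \cite[Theorem 5.1]{BCW} or Corollary~\ref{cor:2}. The extra details you supply --- that $X_A$ is a one-step shift of finite type, that the vertex and edge shifts are conjugate because $A$ has $\{0,1\}$ entries, and the Condition~(I) convention for $\Oo_A$ --- are exactly the points the paper leaves implicit.
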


\begin{proof}
The equivalence of (1) and (2) follows from Theorem~\ref{orbit}.
	
Let $E_A$ be the graph of $A$, i.e., $E_A^0$ is the index set of $A$, $E_A^1=\{(i,j)\in E_A^0\times E_A^0: A(i,j)=1\}$, and $r((i,j))=j$ and $s((i,j))=i$ for $(i,j)\in E_A^1$. Then $\G_{X_A}$ is isomorphic to the groupoid $\G_{E_A}$ of $E_A$ defined in \cite{BCW}. It is well-known that there is an isomorphism $\Psi:\Oo_A\to C^*(E_A)$ satisfying $\Psi(\D_A)=\D(E_A)$. The equivalence of (2) and (3) therefore follows from \cite[Theorem 5.1]{BCW} (or Corollary~\ref{cor:2}).
\end{proof}

Similarly, we get from Corollary~\ref{cor:1} the following strengthening of \cite[Corollary 3.8]{MM}.

\begin{corollary} \label{cor:3}
Let $A$ and $B$ be finite square matrices with entries in $\{0,1\}$, and assume that every row and every column of $A$ and $B$ is nonzero. The following are equivalent.
	\begin{enumerate}
		\item There is a $*$-isomorphism $\Psi:\Oo_A \otimes\mathcal{K}\to \Oo_B \otimes\mathcal{K}$ such that $\Psi(\mathcal{D}_{A}\otimes\mathcal{C})=\mathcal{D}_{B}\otimes\mathcal{C}$.
		\item There are projections $p_A\in \mathcal{D}_A$ and $p_B\in \mathcal{D}_B$ and an isomorphism $\phi:p_A\Oo_Ap_A\to p_B\Oo_Bp_B$ such that $p_A$ is full in $\Oo_A$, $p_B$ is full in $\Oo_B$, and $\phi(p_A\mathcal{D}_A)=p_B\mathcal{D}_B$.
		\item $\TSS$ and $\TSS[B]$ are flow equivalent.
		\item $\G_{X_A}$ and $\G_{X_B}$ are Kakutani equivalent.
		\item $\G_{X_A}$ and $\G_{X_B}$ are groupoid equivalent.
		\item $\G_{X_A}\times\mathcal{R}$ and $\G_{X_B}\times\mathcal{R}$ are isomorphic.
	\end{enumerate}
\end{corollary}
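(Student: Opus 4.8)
The plan is to reduce the entire statement to Corollary~\ref{cor:1} by passing from the $\{0,1\}$-matrices $A$ and $B$ to their associated directed graphs, exactly as in the proof of Corollary~\ref{CK}. First I would let $E_A$ be the graph of $A$ (vertices indexed by the rows and columns of $A$, with an edge $(i,j)$ whenever $A(i,j)=1$, and $s((i,j))=i$, $r((i,j))=j$), and similarly $E_B$. The hypothesis that every row of $A$ is nonzero says that every vertex of $E_A$ emits an edge, i.e.\ $E_A$ has no sinks, and the hypothesis that every column of $A$ is nonzero says that every vertex receives an edge, i.e.\ $E_A$ has no sources. Thus $E_A$ and $E_B$ are finite directed graphs with no sinks and no sources, so Corollary~\ref{cor:1} applies to the pair $(E_A,E_B)$.

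Next I would record the standard dictionary between the matrix picture and the graph picture. The one-sided edge shift $X_{E_A}$ is conjugate to $X_A$, so $\G_{X_A}$ is isomorphic to $\G_{E_A}=\G_{X_{E_A}}$ (as already used in the proof of Corollary~\ref{CK}), and the two-sided edge shift $\X_{E_A}$ is conjugate, and hence flow equivalent, to $\TSS$; likewise for $B$. Moreover there is a $*$-isomorphism $\Oo_A\to C^*(E_A)$ carrying $\D_A$ onto $D(E_A)$. Under these identifications the six conditions of the present corollary match one-for-one with conditions in Corollary~\ref{cor:1}: condition (3) here is the flow equivalence of $\X_{E_A}$ and $\X_{E_B}$ (condition (2) there); conditions (4), (5) and (6) here are Kakutani equivalence, groupoid equivalence, and isomorphism of $\G_{E_A}\times\mathcal{R}$ and $\G_{E_B}\times\mathcal{R}$ (conditions (3), (4) and (5) there); condition (1) here corresponds to the existence of a diagonal-preserving $*$-isomorphism $C^*(E_A)\otimes\mathcal{K}\to C^*(E_B)\otimes\mathcal{K}$ (condition (8) there); and condition (2) here corresponds to the full-corner statement (condition (14) there).

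With the dictionary in place the argument is a translation. I would spell out that the $*$-isomorphism $\Oo_A\to C^*(E_A)$ with $\D_A\mapsto D(E_A)$ induces a $*$-isomorphism $\Oo_A\otimes\mathcal{K}\cong C^*(E_A)\otimes\mathcal{K}$ sending $\D_A\otimes\mathcal{C}$ to $D(E_A)\otimes\mathcal{C}$, so condition (1) here is precisely condition (8) of Corollary~\ref{cor:1} transported across; and that this $*$-isomorphism carries a full projection $p_A\in\D_A$ to a full projection in $D(E_A)$ and restricts to an isomorphism of corners preserving the diagonal subalgebras, so condition (2) here is precisely condition (14) there. Invoking Corollary~\ref{cor:1} for $E_A$ and $E_B$ then yields the equivalence of all six conditions.

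Since all the hard work is carried by Corollary~\ref{cor:1}, there is no deep obstacle; the point requiring the most care is verifying that the standard identifications are genuinely diagonal-preserving and fullness-preserving. Specifically, one must check that $\Oo_A\cong C^*(E_A)$ really does carry $\D_A$ to $D(E_A)$, that tensoring with $\mathcal{K}$ (respectively, compressing to corners) respects the canonical diagonal $\mathcal{C}$ (respectively, $p_A\D_A$) and the fullness of the projections, and that the edge shift and vertex shift of a $\{0,1\}$-matrix are indeed conjugate, so that the flow equivalence of $\TSS$ and $\X_{E_A}$ is automatic rather than something needing separate proof.
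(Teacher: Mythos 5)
Your proposal is correct and follows essentially the same route as the paper: pass to the graph $E_A$ of the matrix $A$, use the standard identifications (conjugacy of $\TSS$ with $\X_{E_A}$, isomorphism $\G_{X_A}\cong\G_{E_A}$, and the diagonal-preserving $*$-isomorphism $\Oo_A\cong C^*(E_A)$), and then invoke Corollary~\ref{cor:1}. The paper's proof is just a terser version of the same translation argument.
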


\begin{proof}
Let $E_A$ be as in the proof of Corollary~\ref{CK}. Then the two-sided shift spaces $\TSS$ and $\X_{E_A}$ are conjugate, the groupoids $\G_{E_A}$ and $\G_{X_A}$ are isomorphic, and there is a $*$-isomorphism $\Psi:\Oo_A\to C^*(E_A)$ satisfying $\Psi(\D_A)=D(E_A)$. The result therefore follows from Corollary~\ref{cor:1}.
\end{proof}

\end{document}